\numberwithin{equation}{section}
\theoremstyle{plain}
\newtheorem{theorem}{Theorem}[section]
\newtheorem{lemma}[theorem]{Lemma}
\newtheorem{proposition}[theorem]{Proposition}
\theoremstyle{definition}
\newtheorem{definition}[theorem]{Definition}
\newtheorem{remark}[theorem]{Remark}
\newtheorem{example}[theorem]{Example}
\def\beqn{\begin{equation}}
\def\beqn*{$$}
\def\eeqn{\end{equation}}
\newcommand{\bx}{{\bf x}}
\newcommand{\by}{{\bf y}}
\def\P{\mathbb{P}}
\def\E{\mathbb{E}}
\def\B{\mathcal B}
\newcommand{\X}{{\mathcal{X}}}
\newcommand{\Y}{{\mathcal{Y}}}
\newcommand{\one}{{\bf 1}}
\newcommand{\al}{\alpha}
\newcommand{\C}{\check{C}}
\newcommand{\Gknt}{G_{k,n}(t)}
\newcommand{\Z}{\mathcal Z}
\newcommand{\iid}{\mathrm{i. i. d}}
\newcommand{\vc}{\text{\v{C}}}
\newcommand{\cc}{\check{C}}
\newcommand{\M}{\mathcal{M}}
\newcommand{\N}{\mathbb{N}}
\newcommand{\R}{\mathbb{R}}
\newcommand{\cs}{C^{*}}
\newcommand{\icj}{{(i,j)}}
\newcommand{\ijk}{{(i,j,K)}}
\newcommand{\ind}[1]{\mathbf{1}\big\{#1\big\}}
\newcommand{\ijp}{{(i,j,+)}}
\newcommand{\ijm}{{(i,j,-)}}
\newcommand{\basicsup}{\sup_{0 \le t \le 1}}
\newcommand{\ijupa}{{(i,j,\uparrow)}}
\newcommand{\ijdoa}{{(i,j,\downarrow)}}
\newcommand{\ktop}{{(k+2,1,+)}}
\newcommand{\aRrho}{R_{q_m} + a(R_{q_m})\rho}
\begin{document}

\bibliographystyle{abbrv}

\title[Functional SLLN for Betti numbers]
{Functional strong law of large numbers for Betti numbers in the tail}
\author{Takashi Owada}
\address{Department of Statistics\\
Purdue University \\
West Lafayette, 47907, USA}
\email{owada@purdue.edu}
\author{Zifu Wei}
\address{Department of Statistics\\
Purdue University \\
West Lafayette, 47907, USA}
\email{wei296@purdue.edu}

\thanks{This research was partially supported by NSF grant DMS-1811428.}

\subjclass[2010]{Primary 60G70, 60F15. Secondary 55U10, 60D05, 60F17.}
\keywords{Functional strong law of large numbers, extreme value theory, Betti number, topological crackle.  \vspace{.5ex}}

\begin{abstract}
The objective of this paper is to investigate the layered structure of topological complexity in the tail of a probability distribution. We establish the functional strong law of large numbers for Betti numbers, a basic quantifier of algebraic topology, of a geometric complex outside  an open ball of radius $R_n$, such that $R_n\to\infty$ as the sample size $n$ increases. The nature of the obtained law of large numbers is determined by the decay rate of a probability density. It especially depends on whether the tail of a density decays at a regularly varying rate or an exponentially decaying rate. The nature of the limit theorem depends also on how rapidly $R_n$ diverges. In particular, if $R_n$ diverges sufficiently slowly, the limiting function in the law of large numbers is crucially affected by the emergence of arbitrarily large connected components supporting topological cycles in the limit. 
\end{abstract}

\maketitle

\section{Introduction} \label{sec:intro}

The main theme of this paper is a phenomenon called \emph{topological crackle}, which refers to a layered structure of homological elements of different orders. In a practical setting, topological crackle appears in a manifold learning problem as in Figure \ref{f:perils_ht_noise}. Our aim in Figure  \ref{f:perils_ht_noise} is to recover the topology of the circle $S^1$ from the union of balls centered around $100$ uniformly distributed points. 
In Figure \ref{f:perils_ht_noise} (c), Gaussian noise is added to the uniform random sample. Then, the union is similar in shape to the circle $S^1$ and the recovery of its topology is possible. However, if the noise has a ``heavy tailed" Cauchy distribution as in Figure \ref{f:perils_ht_noise} (d), the extraneous homological elements (i.e., two distinct components and a one-dimensional cycle) away from the center of $S^1$, will render the recovery of its topology difficult (or even impossible). This example indicates that a small number of ``topologically essential" components and cycles may drastically change topology at a global level. 

\begin{figure}[t]
\centering
\includegraphics[width = 4in]{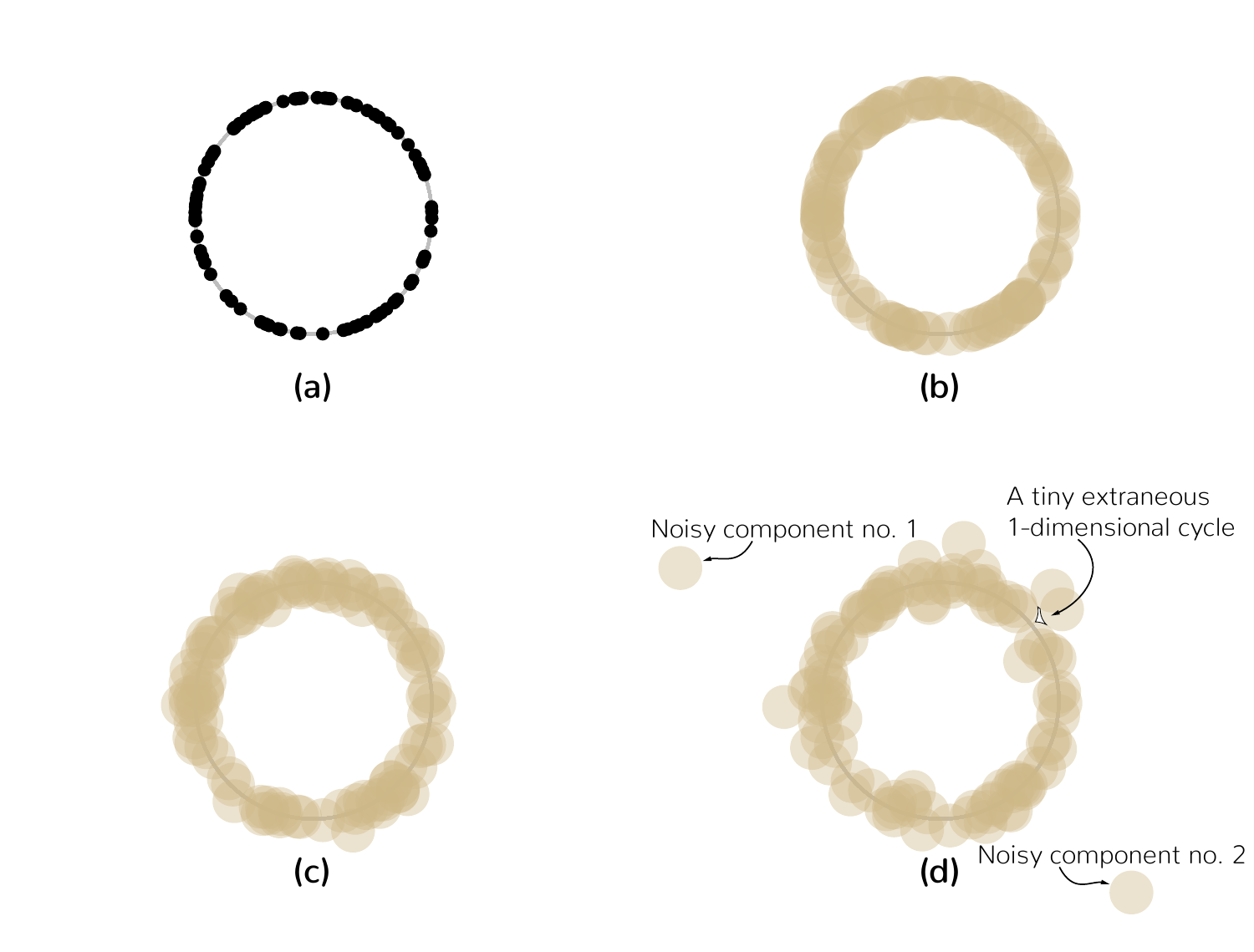}
\caption{\footnotesize{(a) Uniformly distributed points on the circle $S^1$. (b) A family of open balls with radius $0.2$ around the uniform points. (c) Gaussian noise is added to the uniform points via the algorithm in \cite{niyogi:smale:weinberger:2011}. (d) When heavy tailed Cauchy noise is added, several extraneous components and cycles appear. This figure is taken from \cite{thomas:owada:2021b}. }}
\label{f:perils_ht_noise}
\end{figure}

The current work is partially motivated by extreme value theory (EVT), in light of the fact that topological crackle is associated with a ``rare event" occurring in the tail of probability distributions. Beyond the standard literature on EVT  (see, e.g., \cite{resnick:1987,embrechts:kluppelberg:mikosch:1997,dehaan:ferreira:2006}), there have been many attempts so far to understand the geometric and topological features of extreme sample clouds  \cite{balkema:embrechts:2007, schulte:thale:2012, decreusefond:schulte:thaele:2016, adler:bobrowski:weinberger:2014, owada:adler:2017}. This paper contributes to the existing literature by examining  crackle phenomena in a higher dimensional setting. To make our implication more transparent, we consider the power-law density 
\begin{equation}  \label{e:power.law.density}
f(x) = \frac{C}{1+\|x\|^\alpha}, \ \ x\in \R^d, 
\end{equation}
for some $\alpha>d$ and a normalizing constant $C>0$. Let Ann$(K,L)$ be an annulus with inner radius $K$ and outer radius $L$. We then divide $\R^d$ into the layers of annuli at different radii, 
\begin{equation}  \label{e:annuli.power.law}
\R^d = \bigcup_{i=1}^{d+1} \text{Ann} (R_{i,n}, R_{{i-1},n}), 
\end{equation}
where 
\begin{equation}  \label{e:radii.power.law}
R_{i,n} = \begin{cases}
\infty & \ i=0, \\
(Cn)^{\frac{1}{\alpha-d/(i+2)}} & \ i\in\{ 1,\dots,d-1 \}, \\
(Cn)^{1/\alpha} & \ i=d, \\
0 & \ i=d+1. 
\end{cases}
\end{equation}
Figure \ref{f:crackle} visualizes the annuli \eqref{e:annuli.power.law} as a layer of \emph{Betti numbers} of all feasible dimensions.  The Betti number is  a quantifier of topological complexity in view of homological elements such as components and cycles in Figure \ref{f:perils_ht_noise} (d). For every $k\in \{ 1,\dots,d-1 \}$, the $k$th Betti number represents the number of $k$-dimensional cycles (henceforth we call it a $k$-cycle) as a boundary of a $(k+1)$-dimensional body. In Section \ref{sec:setup}, we provide a more rigorous description of Betti numbers under a more precise setup. 
Suppose $\X_n=\{ X_1,\dots,X_n \}\subset \R^d$ denotes independently and identically distributed ($\iid$)  random points in $\R^d$, $d\ge 2$, drawn from \eqref{e:power.law.density}. Let $B(x,r) = \big\{ y\in \R^d: \| y-x\| < r\big\}$  be an open ball of radius $r$ around $x\in \R^d$ ($\|\cdot\|$ is the Euclidean norm), and 
$$
U(r) = \bigcup_{X\in \X_n}B(X, r), \ \ \ r\ge 0, 
$$
be the union of open balls  around the points in $\X_n$. 
Given a growing sequence $R_n\to\infty$ and a positive integer $k\in \{  1,\dots,d-1\}$, the $k$th Betti number in Figure \ref{f:crackle} is  defined as 
\begin{equation}  \label{e:intro.betti}
\beta_{k,n}(t) := \beta_k \Big( \bigcup_{X\in \X_n\cap B(0,R_n)^c} B(X,t/2)\Big), \ \ \ t\ge0.  
\end{equation}
Then, $\beta_{k,n}(t)$ counts the number of $k$-cycles in the outside of an expanding ball $B(0,R_n)$. Note also that \eqref{e:intro.betti} is seen as a stochastic process of right continuous sample paths with left limits. 

\begin{figure}[t]
\includegraphics[scale=0.32]{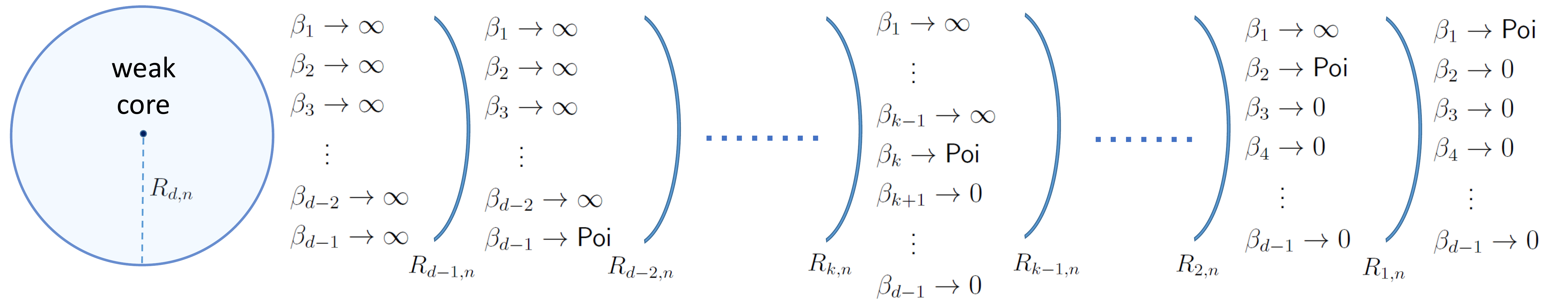}
\caption{\label{f:crackle} \footnotesize{Topological crackle is a layered structure of topological invariants such as Betti numbers.  The $k$th Betti number is denoted as $\beta_k$, and ``\textsf{Poi}'' stands for a Poisson distribution.  } }
\end{figure}

After the pioneering paper of \cite{adler:bobrowski:weinberger:2014}, the layered structure  in Figure \ref{f:crackle} has been intensively studied via the behavior of various topological invariants \cite{owada:adler:2017, owada:2017, owada:2018, owada:bobrowski:2020, thomas:owada:2021b}. In particular, from the viewpoints of \eqref{e:intro.betti}, we have in an asymptotic sense, 
\vspace{3pt}

\begin{itemize}
\item Outside Ann$(R_{1,n},\infty) = B(0,R_{1,n})^c$ there are finitely many $1$-cycles (accordingly, the first order Betti number is approximated by a Poisson distribution), but none of the cycles of dimensions $2,\dots,d-1$. 
\vspace{3pt}

\item Inside Ann$(R_{2,n}, R_{1,n})$ there are infinitely many $1$-cycles and finitely many $2$-cycles, but none of the cycles of dimensions $3,\dots,d-1$. 
\end{itemize}
\vspace{3pt}

In general, for every $k\in \{ 3,\dots,d-1 \}$, 
\vspace{3pt}

\begin{itemize}
\item Inside Ann$(R_{k,n}, R_{k-1,n})$ there are infinitely many cycles of dimensions $1,\dots,k-1$ and finitely many $k$-cycles, but none of the cycles of dimensions $k+1,\dots,d-1$, 
\end{itemize}
\vspace{3pt}

and finally, 
\vspace{3pt}

\begin{itemize}
\item Inside Ann$(R_{d,n}, R_{d-1,n})$ there are infinitely many cycles of \emph{all} dimensions $1,\dots,d-1$. 
\end{itemize}
\vspace{3pt}

In the literature (e.g., \cite{owada:2017}), the innermost ball $B(0,R_{d,n}) = B\big(0, (Cn)^{1/\alpha}\big)$ is called a \emph{weak core}. A weak core is a centered ball, in which random points are highly densely distributed and the homology of the union of balls becomes nearly trivial as $n\to\infty$, i.e., almost all cycles of any dimension inside of a weak core will  be asymptotically filled in.  See Section \ref{sec:setup} for a  formal definition of a weak core. 

As expected from  Figure \ref{f:crackle}, it is not surprising that the stochastic features of \eqref{e:intro.betti} drastically vary, depending on  how rapidly $R_n$ diverges. Among many of the related studies  \cite{owada:adler:2017, owada:2017, owada:2018, owada:bobrowski:2020, thomas:owada:2021b}, the authors of \cite{owada:adler:2017} explored the case $R_n=  R_{k,n} = (Cn)^{\frac{1}{\alpha-d/(k+2)}}$, so that there appear at most finitely many $k$-cycles outside of $B(0,R_n)$ as $n\to\infty$. Consequently, \eqref{e:intro.betti} converges weakly to a Poisson distribution. If $R_n$ diverges more slowly, such that $R_{d,n}\ll R_n \ll R_{k,n}$ as $n\to\infty$, then,  infinitely many $k$-cycles would appear in Ann$(R_n,R_{k,n})$. In this case, \eqref{e:intro.betti} obeys a functional central limit theorem (CLT) (see \cite{owada:2018}). However, the spatial distribution of $k$-cycles  is still sparse. As a result, all the $k$-cycles contributing to the limiting Gaussian process will be of the minimal size (i.e., they all consist of $k+2$ points). If $R_n$ diverges even more slowly, so that $R_n=R_{d,n}=(Cn)^{1/\alpha}$, then $U(t)$ becomes highly connected in the area close to a boundary of a weak core. Although \eqref{e:intro.betti} still follows a CLT under an appropriate normalization,   the components supporting the limiting $k$-cycles will become  arbitrarily large \cite{owada:2018}. In other words, the limiting $k$-cycles  can be supported not only on $k+2$ points, but also on $i$ points for all  $i\ge k+2$. 

The main objective of this paper is to establish the functional strong law of large numbers (SLLN) for \eqref{e:intro.betti} in the space $D[0,1]$ of right continuous functions defined on $[0,1]$ with left limits. This study is relevant only when the behavior of \eqref{e:intro.betti} is governed by a CLT. In this sense, the current study is viewed as a natural continuation of the work in \cite{owada:2018}. We will see that  the nature of the functional SLLN for \eqref{e:intro.betti}, including the scaling constants and the properties of the limiting functions, differs according to the growth rate of $R_n$. 
In Section \ref{sec:fslln.heavy}, we demonstrate two distinct functional SLLNs, depending on whether $R_{d,n}\ll R_n \ll R_{k,n}$ or $R_n=R_{d,n}$, when the density has a regularly varying tail as in \eqref{e:power.law.density}. 

In the literature, the previous studies  \cite{goel:duy:tsunoda:2019, yogeshwaran:subag:adler:2017, thomas:owada:2021a, hiraoka:shirai:trinh:2018} also proved the SLLNs
 for topological invariants such as Betti numbers and the Euler characteristic. However, all of these studies treated topology of an entire space $\R^d$. Then, the topological invariants are crucially impacted by   densely scattered random points closer to the origin, especially those inside of a weak core. As a consequence, the resulting SLLN is robust to the choice of the density $f$, and the topological invariants  grow proportionally to the sample size. 
In the context of topological crackle, however, the limit theorems for topological invariants heavily depend on the decay rate of a probability density. For instance, according to \cite{adler:bobrowski:weinberger:2014, owada:adler:2017, owada:bobrowski:2020}, the layered structure in Figure \ref{f:crackle} appears only when the distribution for $\X_n$ has a tail at least as heavy as that of an exponential distribution. Therefore, if $\X_n$ is drawn from a Gaussian distribution, for each $k \ge 1$, $\beta_{k,n}(t)$ in \eqref{e:intro.betti} simply vanishes as $n\to \infty$.
 
From this point of view, the other main thrust of this paper, which was never studied in \cite{owada:2018}, is to explore how crackle occurs when the density of $\X_n$ has a (sub)exponential tail. One of the simplest cases of such densities is 
\begin{equation}  \label{e:exp.decaying.density}
f(x) =C e^{-\|x\|^\tau/\tau}, \ \ x\in \R^d, 
\end{equation}
for some $\tau\in (0,1]$ and a normalizing constant $C>0$. 
Then, instead of \eqref{e:annuli.power.law} we obtain the ``logarithmic scale" annuli structure 
$$
\R^d = \bigcup_{i=1}^{d+1} \text{Ann} (R_{i,n}, R_{{i-1},n}), 
$$
where 
$$
R_{i,n} = \begin{cases}
\infty & \ i=0, \\
\big(\tau \log n + (i+2)^{-1} (d-\tau) \log (\tau \log n) +\tau \log C\big)^{1/\tau} & \ i\in\{ 1,\dots,d-1 \}, \\
(\tau \log n + \tau \log C)^{1/\tau} & \ i=d, \\
0 & \ i=d+1. 
\end{cases}
$$
In Section \ref{sec:slln.exp}, we establish the functional SLLN for \eqref{e:intro.betti} in the space $D[0,1]$ when the density has an exponentially decaying tail as in \eqref{e:exp.decaying.density}. The asymptotics of \eqref{e:intro.betti} once again depends on the growth rate of $R_n$: a phase transition occurs between the cases $R_{d,n} \ll R_n \ll R_{k,n}$ and  $R_n=R_{d,n}$. 

From an analytic viewpoint, the main challenge of the current study is that the scaling constants for the Betti number may grow  logarithmically, whereas  in the previous studies  \cite{goel:duy:tsunoda:2019, yogeshwaran:subag:adler:2017, thomas:owada:2021a, hiraoka:shirai:trinh:2018}, the scaling constant was equal to the sample size $n$ up to multiplicative constants. More concretely, if the density has an (sub)exponential tail as in \eqref{e:exp.decaying.density}, the radii of annuli are all logarithmic. Accordingly, the scaler for the Betti number will be logarithmic as well; see Example \ref{ex:exp.decaying.density} for more detailed analyses.  In the case of a power-law density,  the radii of annuli are polynomial as shown in  \eqref{e:radii.power.law}. Nevertheless, the scaler for the Betti numbers can still be logarithmic, especially when    $(R_n)$ and $(R_{k,n})$  have the same regular variation exponent (i.e., $\big(\alpha-d/(k+2) \big)^{-1}$), and the difference  between $R_{k,n}$and $R_n$ is at most logarithmic.  Example \ref{ex:power.law.density} below provides more detailed analyses on this point. If the scaler is logarithmic,  a direct application of the Borel-Cantelli lemma, together with the lower-order moment calculations, does not help to establish the required SLLNs. To overcome this difficulty, we shall utilize the concentration inequalities in \cite{bachmann:reitzner:2018}, which themselves were developed for analyzing  Poisson $U$-statistics of the geometric configuration of a point cloud, such as subgraph counts of a random geometric graph. For the application of these concentration bounds, one needs to detect appropriate subsequential upper and lower bounds for various quantities that are used to approximate Betti numbers. The latter approach is a  standard technique  for the theory of geometric graphs; see, e.g., Chapter 3 of the monograph \cite{penrose:2003}. 

This paper is organized as follows. In Section \ref{sec:setup}, we provide a precise setup of the Betti number in \eqref{e:intro.betti} via the notion of a \v{C}ech complex. In Section \ref{sec:main.result}, we develop the  functional SLLNs in two distinct scenarios: one where the distribution has a regularly varying tail and the other where the distribution has an exponentially decaying tail. For each of the distributional contexts, the behavior of \eqref{e:intro.betti}  splits into two additional distinct cases, according to how rapidly $R_n$ diverges. All the proofs are deferred to Section \ref{sec:proofs} and the Appendix. 

As a final remark, our proposed methods are applicable to other geometric complexes beyond a \v{C}ech complex. In particular, for a Vietoris-Rips complex, all the results can be carried over by a simple modification of the scaler of the SLLN. 

\section{Setup} \label{sec:setup}

We begin with introducing fundamental concepts towards proving the main functional SLLNs. Let $\mathcal X_n=\{X_1, \dots, X_n\}$ be a set of $n$ $\iid$ $\R^d$-valued random variables with probability density $f$.  
For the rigorous description of the Betti number in \eqref{e:intro.betti},  we need to introduce a higher-dimensional notion of graphs, called the \emph{geometric complex}. Among many varieties of geometric complexes (see \cite{ghrist:2014}), we especially focus on the \emph{\v{C}ech complex}.
\begin{definition}
Given a set $\X=\{ x_1,\dots,x_n \}$ of points in $\R^d$ and a positive number $r>0$, the \v{C}ech complex $\C(\X,r)$ is defined as follows.
\begin{enumerate}
\item The $0$-simplices are the points in $\X$.
\item For each $m\ge 1$, $[ x_{i_0},\dots,x_{i_m} ]\subset \X$ forms an $m$-simplex if $\bigcap_{j=0}^{m} B(x_{i_j},r/2) \neq \emptyset$.
\end{enumerate}
\end{definition}
The main advantage of the \v{C}ech complex is its homotopy equivalence to the union of balls $U(r/2)$. This fundamental result is known as the Nerve lemma (e.g., Theorem 10.7 in \cite{bjorner:1995}).

The objective of this paper is to study the ``extreme-value behavior" of the \v{C}ech complex generated by points far away from the origin. More concretely, for any sequence  $(R_n, n \geq 1)$ of positive numbers with $R_n\to\infty$, and a positive number $t>0$, 
we consider a \vc ech complex $\cc (\mathcal X_n\cap B(0, R_n)^c, t)$ built over random points in $\X_n$ lying outside of $B(0,R_n)$. Then, the collection of \vc ech complexes
\begin{equation}  \label{e:cech.comp}
\big(  \cc (\mathcal X_n\cap B(0, R_n)^c, t), \, t \ge 0 \big)
\end{equation}
induces a filtration in  parameter $t$; that is, the \vc ech complexes under consideration are non-decreasing, 
$$
\cc(\X_n\cap B(0, R_n)^c, s) \subset \cc(\mathcal X_n\cap B(0, R_n)^c, t)
$$
for all $0<s\le t< \infty$. Given a positive integer $k\in \{ 1,\dots,d-1 \}$ that remains fixed henceforth, the $k$th Betti number of  \eqref{e:cech.comp} is denoted as 
\begin{equation}  \label{e:Betti.tail}
\beta_{k,n}(t) := \beta_k \Big(  \cc(\mathcal X_n\cap B(0, R_n)^c, t)  \Big) = \beta_k \bigg(  \bigcup_{X\in \X_n \cap B(0,R_n)^c} B(X, t/2)\bigg),  \ \ \ t\ge 0, 
\end{equation}
where the second equality is a consequence of the Nerve lemma. Then, \eqref{e:Betti.tail} can be viewed as a non-negative integer-valued stochastic process in parameter $t$ that has right continuous sample paths with left limits. 

To derive the required functional SLLN, we need a more explicit expression of $\beta_{k,n}(t)$. For this purpose, we express $\beta_{k,n}(t)$ in the same way as the previous studies in \cite{kahle:meckes:2013, owada:2018}. Define for $\Y=(y_1,\dots,y_i)\in(\R^d)^i$, $i\geq k+2$, $j\geq 1$, and $t\geq 0$,
\begin{equation} \label{e:def.htij}
h_t^\icj(\Y):= \one \big\{ \beta_k(\cc(\Y, t))=j, \, \cc(\Y, t)  \text{ is connected}\big\},
\end{equation}
and
\begin{equation}  \label{e:def.hntij}
h_{n,t}^\icj(\Y):= h_t^\icj(\Y)\, \one \big\{\M(\Y)\geq R_n\big\},
\end{equation}
with
$$
\M(\Y):= \min_{1\leq \ell\leq i}\lVert y_\ell \rVert. 
$$
For any subset $\Z\supset\Y$ consisting of finitely many $d$-dimensional real vectors, we define
$$
g_t^\icj(\Y,\Z):= h_t^\icj(\Y)\, \one \big\{\cc(\Y, t) \text{ is a connected component of } \cc(\Z, t)\big\}, 
$$
and also, 
\begin{equation} \label{e:def.gntij}
g_{n,t}^\icj(\Y,\Z):= h_{n,t}^\icj(\Y)\, \one \big\{\cc(\Y, t) \text{ is a connected component of } \cc(\Z, t)\big\}, 
\end{equation}
and
\begin{equation}  \label{e:def.Jknij}
J_{k,n}^\icj(t):= \sum_{\Y\subset\X_n, \, |\Y|=i}g_{n,t}^\icj(\Y,\X_n).
\end{equation}
Then, we can express $\beta_{k,n}(t)$ as
\begin{equation}\label{e:def.betsum}
\beta_{k,n}(t)=\sum_{i=k+2}^n\sum_{j\geq 1}j J_{k,n}^\icj (t). 
\end{equation}

Before concluding this section, we shall formally define the notion of a weak core. 
\begin{definition}  \label{def.weak.core}
Let $f$ be a spherically symmetric probability density in $\R^d$. A weak core is a centered ball $B(0,R_n^{(w)})$ such that $nf(R_n^{(w)}\theta) \to 1$ as $n\to\infty$, for every (equivalently, some) $\theta \in S^{d-1}$,  where $S^{d-1}$ is the $(d-1)$-dimensional unit sphere in $\R^d$
\end{definition}
As mentioned in the Introduction, if $f$ has  a specific power-law tail as in \eqref{e:power.law.density},  it is  easy to see that $R_n^{(w)}=R_{d,n}=(Cn)^{1/\alpha}$. Similarly, if $f$ is given by \eqref{e:exp.decaying.density}, one can take  $R_n^{(w)}=R_{d,n}=(\tau \log n + \tau \log C)^{1/\tau}$. See \cite{owada:2017} for more detailed information about a weak core.

\section{Functional strong law of large numbers}  \label{sec:main.result}
We develop two main theorems in this section. In the below, we denote by $RV_\gamma$ the collection of regularly varying functions or sequences (at infinity) of exponent $\gamma\in \R$. 

\subsection{Regularly varying tail case}  \label{sec:fslln.heavy}
First, we explore the case that the density $f$ is spherically symmetric and has a regularly varying tail (at infinity) of exponent $-\alpha$ with $\alpha>d$. Namely, we assume that 
\begin{equation} \label{e:hvytl}
\lim_{r\to\infty}\frac{f(rt\theta)}{f(r\theta)}=t^{-\alpha}
\end{equation}
for every $t>0$ and $\theta\in S^{d-1}$. With spherical symmetry 
of $f$, one can denote $f(r):= f(r\theta)$, $r\ge 0$ for any fixed $\theta \in S^{d-1}$. 
Then, \eqref{e:hvytl} can be written as $f\in RV_{-\alpha}$.

From the literature of topological crackle \cite{adler:bobrowski:weinberger:2014, owada:adler:2017, owada:2018}, it is known that 
$$
\E\big[ \beta_{k,n}(t)\big]\sim C  \rho_n, \ \ \text{as } n\to\infty,
$$
where $C$ is a finite and positive constant and 
$$
\rho_n:= n^{k+2}R_n^d f(R_n)^{k+2}. 
$$
Since the main theme of this paper is a (functional) SLLN for $\beta_{k,n}(t)$, we treat only the case $\rho_n \to \infty$. Under this assumption, Theorem \ref{t:fslln.hvt} below splits the behavior of $\beta_{k,n}(t)$ into two distinct regimes: 
\begin{align*}
&(i) \ \ nf(R_n)\to0, \ \ \ n\to\infty,  \\
&(ii) \ \ nf(R_n) \to \lambda, \ \ n\to\infty, \ \text{for some } \lambda \in (0,\infty). 
\end{align*}
Clearly,  $(R_n)$ in case $(i)$ diverges more rapidly than that in case $(ii)$. 
In case $(i)$, the \v{C}ech complex outside of $B(0,R_n)$ is so sparse  that all the $k$-cycles remaining in the limit of the SLLN will be the simplest one formed by $k+2$ vertices. In contrast, if $R_n$ is determined by condition $(ii)$, the ball $B(0,R_n)$ agrees with the weak core (see Definition \ref{def.weak.core}) up to a proportionality constant. Then, the random points are highly connected to one another in the area sufficiently close to the weak core, and  the limiting function in the SLLN will be much more complicated, because of many connected components formed on $i$ vertices for any $i\ge k+2$. In the special case when $f$ is given by \eqref{e:power.law.density}, condition $(i)$ together with $\rho_n\to\infty$ is equivalent to $R_{d,n}\ll R_n \ll R_{k,n}$ where $R_{d,n}$ and $R_{k,n}$ are defined in \eqref{e:radii.power.law}. Furthermore, condition $(ii)$  implies that $R_n$ is equal to $R_{d,n}$ up to multiplicative factors.

Before stating the main theorem,  for $i\ge k+2$ and $j\ge1$ we define
\begin{equation} \label{e:def.mukij}
\mu_k^{(i,j)}(t; \lambda):= s_{d-1}\int_1^\infty\rho^{d-1-\alpha i}\int_{(\R^d)^{i-1}}h_t^{(i,j)}(0,\by)e^{-\lambda\rho^{-\alpha}t^d \text{vol}\big(\B(\{0,\by\};1)\big)}d\by d\rho, \ \ t\ge 0, \ \lambda\ge0, 
\end{equation}
where $s_{d-1}$ is surface area of the $(d-1)$-dimensional unit sphere and $\by =(y_1,\dots,y_{i-1}) \in (\R^d)^{i-1}$, and 
$$
\B \Big( \big\{ x_1,\dots,x_i \big\}; r  \Big) = \bigcup_{j=1}^i B(x_j, r), \ \ \ x_1,\dots, x_i \in \R^d, 
$$
is the union of open balls of radius $r$  around the points in $\{ x_1,\dots,x_i \}$. Moreover, vol$(B)$ is the volume of a subset $B\subset \R^d$, and $\omega_d$ is the volume of the unit ball in $\R^d$. In the special case $i=k+2$, $j=1$ and $\lambda =0$, we can simplify \eqref{e:def.mukij} as 
$$
\mu_k^{(k+2,1)}(t; 0) = \frac{s_{d-1}}{\alpha (k+2)-d}\, \int_{(\R^d)^{k+1}} h_t^{(k+2,1)}(0,\by) d\by, \ \ t\ge 0. 
$$
In the below, for two sequences $(a_n)$ and $(b_n)$, we write $a_n=\Omega (b_n)$ if there exists a positive constant $C>0$ such that $a_n/b_n \ge C$ for all $n\ge 1$. 

\begin{theorem}  \label{t:fslln.hvt}
$(i)$ Suppose that $R_n$ is a regularly varying sequence of positive exponent, such that $nf(R_n)\to 0$ as $n\to\infty$ and 
\begin{equation}  \label{e:cond:rhon}
\rho_n  = \Omega \big( (\log n)^\eta \big) \ \ \text{ for some } \eta>0. 
\end{equation}
Then, we have, as $n\to\infty$, 
$$
\frac{\beta_{k,n}(t)}{\rho_n} \to \frac{\mu_k^{(k+2,1)}(t; 0)}{(k+2)!}, \ \ \text{a.s.~in } D[0,1]. 
$$
$(ii)$ Suppose that $nf(R_n) \to \lambda$ as $n\to\infty$, for some $\lambda\in \big( 0,(e\omega_d)^{-1} \big)$. Then, as $n\to\infty$, 
\begin{equation}  \label{e:fslln.heavy.weak.core}
\frac{\beta_{k,n}(t)}{R_n^d} \to \mu_k (t; \lambda), \ \ \text{a.s.~in } D[0,1], 
\end{equation}
where 
$$
\mu_k (t; \lambda) := \sum_{i=k+2}^\infty \sum_{j \ge 1} j \frac{\lambda^i}{i!}\, \mu_k^\icj (t; \lambda) <\infty. 
$$
\end{theorem}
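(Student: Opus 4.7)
The plan is to decompose $\beta_{k,n}(t)=\sum_{i\ge k+2}\sum_{j\ge 1}j\,J_{k,n}^{(i,j)}(t)$ as in \eqref{e:def.betsum} and reduce the proof, for each component $J_{k,n}^{(i,j)}(t)$, to three steps: (a) convergence of means at the correct scale; (b) a concentration bound strong enough to drive a Borel--Cantelli argument when $\rho_n$ may grow only polylogarithmically, as allowed by \eqref{e:cond:rhon}; and (c) upgrading a.s.~pointwise convergence to a.s.~convergence in $D[0,1]$. In case (i), only $J_{k,n}^{(k+2,1)}(t)$ survives at order $\rho_n$, since every additional vertex beyond $k+2$ carries a factor $nf(R_n)\to 0$. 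In case (ii), $nf(R_n)\to\lambda>0$ keeps all terms at the common scale $R_n^d$, so the infinite tail in $i$ must be controlled uniformly.

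For step (a), the Palm-type expression
$$
\E\bigl[J_{k,n}^{(i,j)}(t)\bigr]=\binom{n}{i}\int_{(\R^d)^i}\E\bigl[g_{n,t}^{(i,j)}(\by,\by\cup\X_{n-i})\bigr]\prod_{\ell=1}^i f(y_\ell)\,d\by
$$
is treated by the substitution $y_1=R_n\rho\theta$ for $\rho\ge 1$, $\theta\in S^{d-1}$, and $y_\ell=y_1+z_{\ell-1}$ for $\ell\ge 2$. Regular variation $f\in RV_{-\alpha}$ gives $f(R_n\rho\theta)/f(R_n)\to\rho^{-\alpha}$, and the ``connected component'' indicator in $g_{n,t}^{(i,j)}$, whose expectation equals $\bigl(1-\int_{\B(\by;t/2)}f\bigr)^{n-i}$, tends to $1$ in case (i) and to the Poisson void probability $e^{-\lambda\rho^{-\alpha}t^d\,\mathrm{vol}(\B(\{0,\by\};1))}$ in case (ii). Collecting the prefactor $\binom{n}{i}R_n^d f(R_n)^i\sim n^i R_n^d f(R_n)^i/i!$ yields $\rho_n\,\mu_k^{(k+2,1)}(t;0)/(k+2)!$ in case (i) and $R_n^d\,\lambda^i\mu_k^{(i,j)}(t;\lambda)/i!$ in case (ii), matching \eqref{e:def.mukij}.

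For step (b), a second-moment plus Borel--Cantelli argument is insufficient because the scaling constant may be as small as a power of $\log n$. Following the hint in the introduction, I would Poissonize $\X_n$ as a Poisson process of intensity $nf$ and apply the Bachmann--Reitzner concentration inequalities \cite{bachmann:reitzner:2018} to $J_{k,n}^{(i,j)}(t)$, a Poisson $U$-statistic of fixed order $i$, obtaining a deviation bound of the form
$$
\P\bigl(|J_{k,n}^{(i,j)}(t)-\E J_{k,n}^{(i,j)}(t)|>\epsilon\,\E J_{k,n}^{(i,j)}(t)\bigr)\le\exp\bigl(-c(\log n)^{\eta'}\bigr)
$$
for some $\eta'>1$, which is summable in $n$. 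A standard de-Poissonization then transfers the conclusion to the binomial setting.

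For step (c), pointwise a.s.~convergence on a countable dense $Q\subset[0,1]$ follows from (b). To obtain uniform convergence I partition $[0,1]$ into $O(n^\varepsilon)$ intervals of width $\delta_n\to 0$, dominate the oscillation of $J_{k,n}^{(i,j)}$ on each by a monotone auxiliary count (e.g., the number of $i$-subsets of $\X_n\cap B(0,R_n)^c$ whose \v{C}ech complex at scale $t+\delta_n$ is connected), and apply the same concentration program to the auxiliary statistic, using continuity of $t\mapsto\mu_k^{(i,j)}(t;\lambda)$. The chief obstacle lies in case (ii): the limit is an infinite sum in $i$, and the sharp threshold $\lambda\omega_d<e^{-1}$ is needed to derive a geometric bound $\lambda^i\sup_{t\in[0,1]}\mu_k^{(i,j)}(t;\lambda)/i!\lesssim(e\lambda\omega_d)^i$, which yields the uniform tail estimate $\sup_n R_n^{-d}\E\bigl[\sum_{i\ge I}\sum_j j\,J_{k,n}^{(i,j)}(1)\bigr]\to 0$ as $I\to\infty$. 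Combining this tail estimate with the individual a.s.~limits gives \eqref{e:fslln.heavy.weak.core}.
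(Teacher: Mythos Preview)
Your outline has the right ingredients, but step~(b) contains two genuine gaps that would make the argument fail as written.

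First, the Bachmann--Reitzner bound does not give you a summable tail over $n$. With $\E J\sim c\,\rho_n$ and $r=\epsilon\,\E J$, the upper-tail inequality in \cite{bachmann:reitzner:2018} yields at best $\exp\bigl(-C(\E J)^{1/i}\bigr)\le\exp\bigl(-C(\log n)^{\eta/i}\bigr)$ for a $U$-statistic of order $i$, and nothing in the hypotheses forces $\eta/i>1$; indeed for $i=k+2$ this would require $\eta>k+2$, which is not assumed. Hence $\sum_n\exp(-c(\log n)^{\eta'})$ diverges and Borel--Cantelli fails. The paper repairs this by passing to the subsequence $v_m=\lfloor e^{m^\gamma}\rfloor$, $0<\gamma<1$, so that $(\log v_m)^{\eta/i}\ge C m^{\gamma\eta/i}$ and $\sum_m\exp(-Cm^{\gamma\eta/i})<\infty$; the gaps $v_m\le n<v_{m+1}$ are then bridged by monotone sandwich bounds $S_m^\downarrow\le\cdot\le S_m^\uparrow$ built with the extremal radii $R_{p_m},R_{q_m}$, together with $v_{m+1}/v_m\to1$ and Lemma~\ref{l:RV.seq.heavy}.

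Second, after Poissonization $J_{k,n}^{(i,j)}(t)=\sum_{\Y}g_{n,t}^{(i,j)}(\Y,\X_n)$ is \emph{not} a Poisson $U$-statistic of the form $\sum_{\Y}s(\Y)$: the ``connected component of $\cc(\X_n,t)$'' indicator depends on all of $\X_n$, not just on $\Y$, so it fails the local conditions \eqref{e:indicator.condition1}--\eqref{e:indicator.condition2} under which the concentration inequalities are stated. The paper handles the two cases differently. In case~$(i)$ it first replaces $\beta_{k,n}(t)$ by the genuinely local statistic $G_{k,n}(t)=\sum_{|\Y|=k+2}h_{n,t}^{(k+2,1)}(\Y)$ (Lemma~\ref{l:upper.lower.betti} controls the difference via an $L_{k,n}$ term of lower order since $nf(R_n)\to0$), splits $G_{k,n}=G_{k,n}^+-G_{k,n}^-$ into monotone pieces, and applies concentration to these. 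In case~$(ii)$ it avoids concentration entirely: because $R_n^d$ grows polynomially, Chebyshev along the same subsequence suffices, with $T_m^\uparrow,T_m^\downarrow$ built so that the variance is $O(R_{q_m}^d)$ (Lemma~\ref{l:first.second.moments.heavy}). Your step~(c) is also different from the paper's: rather than partitioning $[0,1]$ and controlling oscillations directly, the paper decomposes $h_t^{(i,j)}=h_t^{(i,j,+)}-h_t^{(i,j,-)}$ into \emph{monotone} indicators and invokes the elementary Lemma~\ref{l:pointwise.to.funcl} to upgrade pointwise to uniform convergence, which interacts cleanly with the subsequence sandwich.
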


\begin{remark} 
In Theorem \ref{t:fslln.hvt} $(ii)$ above, there is a technical assumption that $\lambda < (e\omega_d)^{-1}$. As shown in Section \ref{sec:proof.fslln.hvt}, however, the  SLLN holds for \emph{every} $\lambda>0$, in the case of the truncated Betti number 
\begin{equation}  \label{e:truncated.Betti.main.section}
\beta_{k,n}^{(M)}(t) := \sum_{i=k+2}^M \sum_{j\ge 1}j J_{k,n}^{(i,j)} (t), \ \ \ \ M =k+2,k+3, \dots. 
\end{equation}
In fact, the upper bound condition for $\lambda$ is applied only when we show that the difference between \eqref{e:def.betsum} and \eqref{e:truncated.Betti.main.section} vanishes a.s.~when they are scaled by $R_n^d$. 
\end{remark}

\begin{example}  \label{ex:power.law.density}
We consider the power-law density
$$
f(x)=\frac{C}{1+\|x \|^\alpha}, \ \ x\in \R^d,
$$
for some $\alpha>d$ and a normalizing constant $C>0$. 
In this example, we consider three distinct sequences of growing radii, 
\begin{align*}
&(i) \ \ R_n = (\log n)^{-\xi} n^{\frac{1}{\alpha-d/(k+2)}} \ \ \text{for some } \xi>0, \\
&(ii) \ \ R_n = n^{\frac{1}{\al-d/b}} \ \ \text{ for some } k+2 < b < \infty, \ \ \\
&(iii) \ \ R_n = (cn)^{1/\alpha} \ \ \text{for some } c>C e \omega_d. 
\end{align*}
Note that $(R_n)$ in cases $(i)$ and $(ii)$ satisfies $R_{d,n}\ll R_n \ll R_{k,n}$ as $n\to\infty$, where $R_{d,n}$ and $R_{k,n}$ are defined in \eqref{e:radii.power.law}. In case $(iii)$, $R_n$ is equal to $R_{d,n}$ up to multiplicative factors. 

Since $(R_n)$ in case $(i)$ grows fastest, the occurrence of $k$-cycles outside  $B(0,R_n)$ is the least likely of the three regimes. In particular, $(R_n)$ and $(R_{k,n})$ are  ``close" in size, in the sense that both the sequences have the same regular variation exponent. Then, $\beta_{k,n}(t)$ grows only logarithmically. In fact, one can readily check that 
$$
\rho_n \sim C^{k+2} (\log n)^{\xi(\alpha (k+2)-d)}, \ \ \ n\to\infty, 
$$
and Theorem \ref{t:fslln.hvt} $(i)$ yields that 
\begin{equation}  \label{e:fslln.heavy.ex1}
\frac{\beta_{k,n}(t)}{(\log n)^{\xi(\alpha (k+2)-d)}} \to \frac{C^{k+2}\mu_k^{(k+2,1)}(t; 0)}{(k+2)!}, \ \ n\to\infty, \ \ \text{a.s. in } D[0,1]. 
\end{equation}

If $R_n$ diverges more slowly as in case $(ii)$, then, far more $k$-cycles would occur in Ann$(R_n,R_{k,n})$, and $\beta_{k,n}(t)$ begins to grow polynomially. More concretely, we have 
$$
\rho_n \sim C^{k+2} n^{\frac{d(1-(k+2)/b)}{\al-d/b}}, \ \ n\to\infty, 
$$
and Theorem \ref{t:fslln.hvt} $(i)$ gives that 
\begin{equation}  \label{e:fslln.heavy.ex2}
\frac{\beta_{k,n}(t)}{n^{\frac{d(1-(k+2)/b)}{\al-d/b}}} \to \frac{C^{k+2} \mu_k^{(k+2,1)}(t; 0)}{(k+2)!}, \ \ n\to\infty, \ \ \text{a.s. in } D[0,1].  
\end{equation}

Finally, if $R_n$ diverges even more slowly as in case $(iii)$, then $\beta_{k,n}(t)$ grows even faster and Theorem \ref{t:fslln.hvt} $(ii)$ concludes that 
\begin{equation}  \label{e:fslln.heavy.ex3}
\frac{\beta_{k,n}(t)}{n^{d/\alpha}} \to c^{d/\alpha} \mu_k(t; C/c), \ \ \ n\to\infty, \ \ \text{a.s.~in } D[0,1]. 
\end{equation}
The limiting function above becomes more complicated than those in \eqref{e:fslln.heavy.ex1} and \eqref{e:fslln.heavy.ex2}, because of the emergence of arbitrarily large connected components  supporting $k$-cycles in the limit. 
\end{example}

\subsection{Exponentially decaying tail case}  \label{sec:slln.exp}
The aim of this section is to explore the limiting behavior of \eqref{e:def.betsum} when $f$ has an exponentially decaying tail. As a generalization of \eqref{e:exp.decaying.density}, we consider the density
\begin{equation}\label{e:exp.density}
f(x)=C\exp\big\{-\psi\big(\lVert x\rVert\big)\big\}, \qquad x\in\R^d, 
\end{equation}
where $C>0$ is a normalizing constant and $\psi:[0,\infty)\to [0,\infty)$ is a regularly varying function (at infinity) with index $\tau \in (0,1]$. Because of the spherical symmetry of \eqref{e:exp.density}, we can define $f(r):=f(r\theta)$, $r\ge 0$ for any fixed $\theta$. 

Before continuing, we need to put additional assumptions on $\psi$. First, it is assumed to be twice differentiable such that $\psi'(x)>0$ for all $x>0$, and $\psi'(x)$ is eventually non-increasing; that is, there is a $x_0>0$ so that $\psi'(x)$ is non-increasing for all $x\ge x_0$. Let $a(z):= 1/\psi'(z)$, $z>0$. It then follows from Proposition 2.5 in \cite{resnick:2007} that $a\in \text{RV}_{1-\tau}$. In \cite{owada:adler:2017} it was shown that topological crackle occurs if and only if 
\begin{equation}  \label{e:limit.a}
c:= \lim_{z\to \infty} a(z) \in (0,\infty]. 
\end{equation}
From this viewpoint, the case $\tau>1$ is irrelevant for the current study. Indeed, if $\tau>1$, it holds that $a(z) \to 0$ as $z\to \infty$ and topological crackle does not occur. Moreover, if $\tau\in (0,1)$, then \eqref{e:limit.a} is automatic with $c=\infty$. We thus need to assume \eqref{e:limit.a} only when $\tau=1$. 

Under this setup,  it was shown in the literature  \cite{adler:bobrowski:weinberger:2014, owada:adler:2017, owada:bobrowski:2020}  that 
$$
\E \big[ \beta_{k,n}(t) \big] \sim C\eta_n, \ \  \ n\to\infty,
$$
where $C$ is a finite and positive constant and 
$$
\eta_n := n^{k+2}a(R_n)R_n^{d-1} f(R_n)^{k+2}. 
$$
By the same reasoning as in Section \ref{sec:fslln.heavy}, we focus only on the case $\eta_n\to\infty$ as $n\to\infty$. As before, the asymptotics of $\beta_{k,n}(t)$ is divided into  two different regimes: 
\begin{align*}
&(i) \ \ nf(R_n)\to0, \ \ \ n\to\infty,  \\
&(ii) \ \ nf(R_n) \to \lambda, \ \ n\to\infty, \ \text{for some } \lambda \in (0,\infty). 
\end{align*}
For the analyses of case $(i)$, we shall restrict $(R_n)$ to a slightly narrower class, 
\begin{equation}  \label{e:specific.Rn}
R_n = \psi^\leftarrow (\log n +b\log \log n), \ \  \text{for some }   b \in \Big(0, \, \frac{d-\tau}{ \tau (k+2) } \Big), 
\end{equation}
where $\psi^\leftarrow (x) = \inf \big\{ y:\psi(y) \ge x \big\}$, $x>0$,  
is the (left continuous) inverse of $\psi$.  
Note that condition $(i)$ requires that a constant $b$ in \eqref{e:specific.Rn} must be positive. Moreover, if one takes $b > \frac{d-\tau}{\tau(k+2)}$, then $\eta_n$ no longer diverges as $n\to\infty$; hence, we need to restrict the range of $b$ as in \eqref{e:specific.Rn}.

Finally, for the description of the limiting function in the SLLN, we need the following functions: for  $i\ge k+2$ and $j\ge 1$, 
\begin{align}
\xi_k^{(i,j)}(t; \lambda):=&\int_{S^{d-1}}\int_0^\infty \int_{(\R^d)^{i-1}}h_t^\icj(0,\by)\, e^{-\rho i-c^{-1}\sum_{\ell=1}^{i-1}\langle\theta,y_\ell\rangle}  \prod_{\ell=1}^{i-1} \one \big\{ \rho+c^{-1}\langle \theta, y_\ell\rangle \ge 0 \big\} \label{e:def.xikij}  \\
&\qquad \qquad  \times\exp\Big\{-\lambda e^{-\rho}\int_{\B(\{0,\by\};  t)} e^{-c^{-1} \langle \theta, z\rangle}\,  dz \Big\}d\by\, d\rho\, J(\theta)\, d\theta, \ \ t \ge 0, \ \lambda\ge 0, \notag 
\end{align}
where $\by=(y_1,\dots,y_{i-1}) \in (\R^d)^{i-1}$ and $\langle \cdot, \cdot \rangle$ is the  Euclidean inner product. Further, $J(\theta)$ is the Jacobian 
$$
J(\theta) = \sin^{d-2}(\theta_1)\sin^{d-3}(\theta_2)\dots \sin (\theta_{d-2}). 
$$

\begin{theorem} \label{t:fslln.expdt}
$(i)$ Suppose that $R_n = \psi^\leftarrow (\log n + b\log \log n)$ 
for some $0 < b < \frac{d-\tau}{ \tau (k+2) }$. 
Then, as $n\to\infty$, 
$$
\frac{\beta_{k,n}(t)}{\eta_n} \to \frac{\xi_k^{(k+2,1)}(t; 0)}{(k+2)!}, \ \ \ \text{a.s.~in }  D[0,1]. 
$$
$(ii)$ Suppose that $nf(R_n) \to \lambda$ as $n\to\infty$, for some $\lambda\in \big( 0,(e\omega_d)^{-1} \big)$. If $d=2$, we restrict the range of $\tau$ to $(0,1)$. 
Then, as $n\to\infty$, 
$$
\frac{\beta_{k,n}(t)}{a(R_n)R_n^{d-1}} \to \xi_k(t; \lambda), \ \ \text{a.s.~in } D[0,1], 
$$
where
$$
\xi_k(t; \lambda):=\sum_{i=k+2}^\infty\sum_{j\ge 1}j\, \frac{\lambda^i}{i !}\, \xi_k^{(i,j)}(t; \lambda) <\infty. 
$$
\end{theorem}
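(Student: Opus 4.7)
The plan is to parallel the strategy used for Theorem \ref{t:fslln.hvt} and adapt each step to the subexponential setting. I would begin from the decomposition \eqref{e:def.betsum} and compute the leading asymptotics of $\E[J_{k,n}^\icj(t)]$ under the density \eqref{e:exp.density}. Writing the integral in quasi-polar coordinates based at the sphere $\|x\|=R_n$ and invoking the uniform-on-compacta expansion $f(R_n + a(R_n)u)/f(R_n)\to e^{-u}$ from \cite{owada:adler:2017}, the defining integral of $\E[J_{k,n}^\icj(t)]$ converges to $\lambda^i \xi_k^\icj(t;\lambda)/i!$. In part (i) the vacancy indicator inside $g_{n,t}^\icj$ contributes trivially because $\lambda=0$, while in part (ii) it generates, through a Poissonization limit, the exponential factor $\exp\{-\lambda e^{-\rho}\int_{\B(\{0,\by\}; t)}e^{-c^{-1}\langle \theta,z\rangle}\,dz\}$ that appears in \eqref{e:def.xikij}. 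This produces $\E[\beta_{k,n}(t)]\sim C_k\eta_n$ in part (i) and $\E[\beta_{k,n}(t)]\sim a(R_n)R_n^{d-1}\xi_k(t;\lambda)$ in part (ii), with limits matching the claimed a.s.\ limits.

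For the almost sure convergence in part (i), since $\eta_n$ may grow only polylogarithmically when $b>0$, a direct Borel--Cantelli via second moments will not be summable. I would therefore replace $\X_n$ by a Poisson process $\Pn$ of intensity $nf$, observe that the leading term $\sum_{|\Y|=k+2} h_{n,t}^{(k+2,1)}(\Y)$ is then a Poisson $U$-statistic of a geometric configuration, and apply the concentration inequalities of \cite{bachmann:reitzner:2018} to obtain a stretched-exponential tail bound on the deviation from the mean that is summable in $n$. A standard depoissonization of the type used in Chapter 3 of \cite{penrose:2003} then transfers the bound to the binomial setting. To upgrade pointwise a.s.\ convergence to convergence in $D[0,1]$, I would discretize $t$ on a mesh of width $\delta$ and bound the oscillation of $\beta_{k,n}$ between consecutive grid points by counts of $(k+1)$- and $(k+2)$-simplices whose circumradius lies in the corresponding $\delta$-interval; those counts are themselves Poisson $U$-statistics and are handled by the same concentration bound. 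Letting $\delta\to 0$ along a subsequence yields uniform convergence.

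For part (ii), the main new difficulty is the infinite sum over component sizes in \eqref{e:def.betsum}. I would introduce the truncated Betti number
\[
\beta_{k,n}^{(M)}(t) := \sum_{i=k+2}^M\sum_{j\ge 1}j\,J_{k,n}^\icj(t),
\]
establish the functional SLLN for $\beta_{k,n}^{(M)}(t)/(a(R_n)R_n^{d-1})$ for each fixed $M$ by the Poissonization--concentration--discretization recipe of part (i), and then show that the tail $\beta_{k,n}(t)-\beta_{k,n}^{(M)}(t)$, divided by $a(R_n)R_n^{d-1}$, is uniformly small in $t\in[0,1]$ once $M$ is large, almost surely. Using the combinatorial bound $\beta_k(\cc(\Y,t))\le\binom{|\Y|}{k+1}$ together with the exponential ratio asymptotic for $f$, the expectation of this tail is dominated by a series of the form $\sum_{i\ge M}C^i(e\omega_d\lambda)^i/(i-k-2)!$, which converges precisely because $\lambda<(e\omega_d)^{-1}$; the a.s.\ version is obtained from Bachmann--Reitzner concentration applied to the full tail $U$-statistic. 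The exclusion of the borderline case $d=2,\tau=1$ reflects the fact that the scaler $a(R_n)R_n^{d-1}$ reduces to a single logarithm there, leaving no margin for the tail estimate.

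The main obstacle is the combination of a polylogarithmic scaler (in part (i) when $b>0$, and throughout part (ii)) together with the requirement of convergence in $D[0,1]$: the first rules out crude moment-based Borel--Cantelli and forces the use of the sharp Poisson concentration machinery, while the second necessitates a careful discretization argument since $\beta_{k,n}(\cdot)$ is not monotone in $t$ even though the underlying simplicial filtration is. The infinite-sum tail control in part (ii) is the step most sensitive to the hypotheses, and I expect the bound $\lambda<(e\omega_d)^{-1}$ to enter there in precisely the same way as in Theorem \ref{t:fslln.hvt}(ii).
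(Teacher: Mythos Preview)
Your overall architecture is close to the paper's, but two of your mechanisms differ from what the authors actually do, and one of them is a genuine gap.

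\medskip
\textbf{Uniformity in $t$.} You propose to discretize $t$ and control the oscillation of $\beta_{k,n}$ between grid points by simplex counts. The paper avoids this entirely: it writes $h_t^{(i,j)}=h_t^{(i,j,+)}-h_t^{(i,j,-)}$ where both pieces are \emph{non-decreasing} in $t$ (see \eqref{e:def.htijp}--\eqref{e:monotonicity.ind}), and for part (ii) it further introduces a second parameter $s$ so that $J_{k,n}^{(i,j,+)}(t,s)$ is monotone in each coordinate. Then Lemma~\ref{l:pointwise.to.funcl} upgrades pointwise a.s.\ convergence to uniform convergence for free. Your discretization route is not wrong, but it is considerably heavier; the monotone-difference trick is the key device you are missing.

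\medskip
\textbf{Part (ii): concentration versus Chebyshev.} Here your plan has a real gap. You propose to apply Bachmann--Reitzner concentration to $J_{k,n}^{(i,j)}$ (and to the tail $\sum_{i>M}$). But $J_{k,n}^{(i,j)}(t)$ carries the indicator that $\cc(\Y,t)$ is a connected \emph{component} of $\cc(\X_n,t)$, which depends on the whole point cloud; it is not a Poisson $U$-statistic of the form $\sum_{\Y}s(\Y)$ required by Proposition~\ref{p:concentration}. Likewise the tail $V_{m,M}$ is a sum over all orders $i>M$ and is not a fixed-order $U$-statistic. The paper does \emph{not} use concentration in part (ii) at all: it sandwiches $J_{k,n}^{(i,j,+)}(t,s)/(a(R_n)R_n^{d-1})$ between the subsequential quantities $T_m^{(i,j,\downarrow)}$ and $T_m^{(i,j,\uparrow)}$ of \eqref{e:def.upbd}--\eqref{e:def.lobd}, computes their variances directly (Lemma~\ref{l:first.second.moments.exp1}), and applies Chebyshev plus Borel--Cantelli along $v_m=\lfloor e^{m^\gamma}\rfloor$. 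Summability of $\sum_m \big(a(R_{q_m})R_{q_m}^{d-1}\big)^{-1}$ requires $\gamma(d-\tau)/\tau>1$, i.e.\ $\gamma\in(\tau/(d-\tau),1)$, and \emph{this} is where the restriction $d=2\Rightarrow\tau<1$ enters: one needs $\tau/(d-\tau)<1$. Your diagnosis that the scaler becomes a single logarithm when $d=2,\tau=1$ is correct, but the failure occurs already at the Chebyshev step for the truncated Betti number, not only in the tail estimate.

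\medskip
\textbf{Part (i).} Here your plan essentially matches the paper: approximate $\beta_{k,n}$ by the genuine $U$-statistic $G_{k,n}(t)=\sum_{|\Y|=k+2}h_{n,t}^{(k+2,1)}(\Y)$, Poissonize along the subsequence $v_m$, apply Proposition~\ref{p:concentration} to the monotone pieces $S_m^{\uparrow},S_m^{\downarrow}$, and depoissonize (Lemma~\ref{l:difference.binom.poisson.exp}). The paper makes the growth condition $\eta_n=\Omega((\log n)^\zeta)$ explicit via \eqref{e:normalizing.faster.than.log}, and controls $|\beta_{k,n}-G_{k,n}|$ through Lemma~\ref{l:upper.lower.betti} rather than an oscillation bound.
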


\begin{example}  \label{ex:exp.decaying.density}
We consider the density
$$
f(x) = Ce^{-\|x\|^\tau/\tau},  \ \ \ x\in \R^d, 
$$
for some $\tau\in (0,1]$ and  a normalizing constant $C>0$. Then, $\psi(z)=z^\tau /\tau$ and $a(z)=z^{1-\tau}$ for $z>0$. 
We first set
$$
R_n = \psi^\leftarrow (\log n +b\log \log n) = (\tau \log n + b\tau \log \log n)^{1/\tau}
$$
for some $b\in \big( 0,\frac{d-\tau}{\tau(k+2) }\big)$. It is then straightforward to calculate that 
$$
\eta_n \sim \tau^{(d-\tau)/\tau} C^{k+2} (\log n)^{\frac{d-\tau}{\tau}-b(k+2)}, 
$$
where $\frac{d-\tau}{\tau} -b(k+2)>0$. 
Thus, Theorem \ref{t:fslln.expdt} $(i)$ shows that, as $n\to\infty$, 
\begin{equation}  \label{e:fslln.exp.ex1}
\frac{\beta_{k,n}(t)}{(\log n)^{\frac{d-\tau}{\tau}-b(k+2)}} \to \frac{\tau^{(d-\tau)/\tau}C^{k+2} \xi_k^{(k+2,1)}(t; 0)}{(k+2)!}, \ \ \text{a.s.~in } D[0,1].  
\end{equation}

Next, we take $R_n = (\tau \log n + \log c_1)^{1/\tau}$ 
for some $c_1>(C e\omega_d)^\tau$. It then holds that $nf(R_n)=c_1^{-1/\tau}C<(e\omega_d)^{-1}$. 
Since $a(R_n)R_n^{d-1} \sim (\tau \log n)^{(d-\tau)/\tau}$ as $n\to\infty$, Theorem \ref{t:fslln.expdt} $(ii)$ indicates that 
$$
\frac{\beta_{k,n}(t)}{( \log n )^{\frac{d-\tau}{\tau}}} \to \tau^{(d-\tau)/\tau}\xi_k(t; c_1^{-1/\tau} C), \ \ \text{a.s.~in } D[0,1]. 
$$
As in \eqref{e:fslln.heavy.ex3} of Example \ref{ex:power.law.density}, the limiting function above is  affected by many  $k$-cycles  on arbitrarily large connected components, whereas the limit in \eqref{e:fslln.exp.ex1} is determined by the $k$-cycles supported exactly on $k+2$ points. 
\end{example}

\section{Proofs}  \label{sec:proofs}

\subsection{Preliminaries}

Before commencing the proof, we need to introduce additional functions and objects pertaining to the indicator \eqref{e:def.htij}. For $i \ge k+2$, $j \ge 1$, and a connected simplicial complex $K$ that has $i$ vertices, 
$$
h_t^\ijk(\Y) :=\one \big\{\cc(\Y, t)\cong K, \, \beta_k(K)=j \big\}, \ \ \Y=(y_1,\dots,y_{k+2}) \in (\R^d)^{k+2},
$$
where $\cong$ denotes isomorphism between simplicial complexes. With this notation we can interpret $h_t^\icj(\Y)$ as 
\begin{equation}\label{e:def.htijsumK}
h_t^\icj(\Y)=\sum_{K:|K|=i} h_t^\ijk(\Y), 
\end{equation}
where the sum is taken over all connected simplicial complexes on $i$ vertices. From \eqref{e:def.htijsumK}, $h_t^\icj(\Y)$ can be   decomposed as 
$$
h_t^\icj (\Y) = h_t^\ijp (\Y) - h_t^\ijm (\Y), 
$$
where 
\begin{align}
h_t^\ijp (\Y) &:= \sum_{K: |K|=i} h_t^\ijk (\Y) +  \sum_{K: |K|=i} \sum_{\substack{L: |L|=i, \\ L \supsetneq K}}\one \big\{ \cc(\Y, t) \cong L \big\}, \label{e:def.htijp}\\
h_t^\ijm (\Y) &:=   \sum_{K: |K|=i} \sum_{\substack{L: |L|=i, \\ L \supsetneq K}}\one \big\{ \cc(\Y, t) \cong L \big\}. \label{e:def.htijm}
\end{align}
In the above expressions, the inner sums in \eqref{e:def.htijp} and \eqref{e:def.htijm} are taken over all connected complexes $L$ built over $i$ points that contain $K$ as its \emph{proper} subcomplex. Since there are at most finitely many isomorphism classes of such complexes, these sums consist of at most a finite number of terms. Consequently, for any $i\ge k+2$ and $j\ge 1$, \eqref{e:def.htijp} and \eqref{e:def.htijm} are bounded functions. Notice further that \eqref{e:def.htijp} and \eqref{e:def.htijm} are non-decreasing functions in $t$. Namely, for all $0 < s \le t <\infty$ and $\Y\in (\R^d)^{k+2}$, 
\begin{equation}  \label{e:monotonicity.ind}
h_s^\ijp (\Y) \le h_t^\ijp (\Y), \ \ \ \ \ h_s^\ijm (\Y) \le h_t^\ijm (\Y). 
\end{equation}
In the special case $i=k+2$ and $j=1$, it is easy to check that for all $x_1,\dots,x_{k+2}\in \R^d$, 
$$
h_t^{(k+2,1,+)} (x_1,\dots,x_{k+2}) = \one \Big\{ \bigcap_{j=1,j\neq j_0}^{k+2}B(x_j,t)\neq\emptyset 
\text{ for all } j_0\in\{1,\dots, k+2\}\Big\},
$$
and 
$$
h_t^{(k+2,1,-)}(x_1,\dots,x_{k+2})=\one \Big\{\bigcap_{j=1}^{k+2}B(x_j, t)\neq\emptyset\Big\}.
$$
In addition to the monotonicity \eqref{e:monotonicity.ind}, the following properties of $h_t^{(i,j,\pm)}$ are important for our analyses. 
\begin{itemize}
\item $h_t^{(i,j,\pm)}$ are shift-invariant, i.e.,  
\begin{equation}  \label{e:shift.invariance}
h_t^{(i,j,\pm)}(\Y) = h_t^{(i,j,\pm)}(\Y+x), \ \ \Y\in (\R^d)^{k+2}, \ x\in \R^d. 
\end{equation}
\item $h_t^{(i,j,\pm)}$ are locally determined, i.e., there exists $L$ (depending only on $i$) such that 
\begin{equation}  \label{e:locally.determined}
h_t^{(i,j,\pm)}(\Y)=0, \ \ \text{whenever diam}(\Y)>Lt, 
\end{equation}
where diam$(\Y)=\max_{x,y \in \Y}\|x-y\|$. 
\end{itemize}
Finally, we define various functions analogous to those defined at \eqref{e:def.hntij}, \eqref{e:def.gntij}, \eqref{e:def.Jknij},  \eqref{e:def.mukij},  \eqref{e:def.xikij}, respectively.
\begin{align}
h_{n,t}^{(i,j,\pm)}(\Y)&:= h_t^{(i,j,\pm)}(\Y)\, \one \big\{\M(\Y)\geq R_n\big\}, \notag \\
g_{n,t}^{(i,j,\pm)}(\Y,\Z)&:= h_{n,t}^{(i,j,\pm)}(\Y)\, \one  \big\{\cc(\Y, t) \text{ is a connected component of } \cc(\Z, t)\big\},\label{e:def.gntijpm} \\
J_{k,n}^{(i,j,\pm)}(t) &:= \sum_{\Y\subset\X_n, \, |\Y|=i}g_{n,t}^{(i,j,\pm)}(\Y,\X_n),  \label{e:def.Jknijpm} \\
\mu_k^{(i,j, \pm)}(t; \lambda) &:= s_{d-1}\int_1^\infty\rho^{d-1-\alpha i}\int_{(\R^d)^{i-1}}h_t^{(i,j, \pm)}(0,\by)e^{-\lambda\rho^{-\alpha}t^d \text{vol}\big(\B(\{0,\by\};1)\big)}d\by d\rho, \label{e:def.mukijpm} \\
\xi_k^{(i,j, \pm)}(t; \lambda):=&\int_{S^{d-1}}\int_0^\infty \int_{(\R^d)^{i-1}}h_t^{(i,j,\pm)}(0,\by)\, e^{-\rho i-c^{-1}\sum_{\ell=1}^{i-1}\langle\theta,y_\ell \rangle}  \prod_{\ell=1}^{i-1} \one \big\{ \rho+c^{-1}\langle \theta, y_\ell\rangle \ge 0 \big\}  \label{e:def.xikijpm} \\
&\qquad \times\exp\Big\{-\lambda e^{-\rho}\int_{\B(\{0,\by\};  t)} e^{-c^{-1} \langle \theta, z\rangle}\,  dz \Big\}d\by\, d\rho\, J(\theta)\, d\theta.  \notag  
\end{align}

One of the main ideas for proving the SLLN for \eqref{e:def.betsum} is to detect appropriate subsequential upper and lower bounds for the quantities that are used to approximate the Betti number \eqref{e:def.betsum}. After detecting such bounds, we apply the Borel-Cantelli lemma to the obtained bounds. This is a standard approach for proving the SLLN for graph related statistics, such as subgraph and component counts, in random geometric graphs; see, e.g., Theorems 3.18 and 3.19 in \cite{penrose:2003}. 

Specifically, we take a constant $\gamma\in(0,1)$ and define 
\begin{equation}  \label{e:def.vm}
v_m = \lfloor e^{m^\gamma} \rfloor, \ \ m=0,1,2,\dots.
\end{equation}
Then, for each $n\in \N$, there exists a unique $m=m(n)\in \N$ such that $v_m \le n <v_{m+1}$. 
Given a sequence $(R_n, \, n \ge 1)$ growing to infinity, we define 
\begin{equation}  \label{e:def.pmqm}
p_m:=\arg\max\{v_m\leq \ell\leq v_{m+1}: R_\ell\}, \ \ q_m:=\arg\min\{v_m\leq \ell\leq v_{m+1}: R_\ell\}. 
\end{equation}
For the density $f$ satisfying \eqref{e:hvytl}, we set 
\begin{align}  
\begin{split} \label{e:def.rmsm}
r_m &:= \arg \max \{ v_m \le \ell \le v_{m+1}: R_\ell^d f(R_\ell)^{k+2} \},   \\
s_m &:= \arg \min \{ v_m \le \ell \le v_{m+1}: R_\ell^d f(R_\ell)^{k+2} \}, 
\end{split}
\end{align}
and for the density $f$ in \eqref{e:exp.density}, we define 
\begin{align}  
\begin{split}  \label{e:def.bmcm}
b_m &:= \arg \max \big\{ v_m\le \ell \le v_{m+1}: a(R_\ell) R_\ell^{d-1}  \big\},   \\
c_m &:= \arg \min \big\{ v_m\le \ell \le v_{m+1}: a(R_\ell) R_\ell^{d-1}  \big\}, 
\end{split}
\end{align}
and 
\begin{align}
\begin{split}  \label{e:def.emgm} 
e_m &:= \arg \max \big\{ v_m \le \ell \le v_{m+1}:  a(R_\ell)R_\ell^{d-1} f(R_\ell)^{k+2} \big\},   \\  
g_m &:=  \arg \min \big\{ v_m \le \ell \le v_{m+1}:  a(R_\ell)R_\ell^{d-1} f(R_\ell)^{k+2} \big\}. 
\end{split}
\end{align}

In the sequel we provide the proofs of Theorems \ref{t:fslln.hvt} and \ref{t:fslln.expdt}. Throughout the proof, $\cs$ denotes a generic positive constant,  which is independent of $n$ and may vary between and within the lines. For two sequences $(a_n)$ and $(b_n)$, we write $a_n\sim b_n$ if $a_n/b_n\to 1$ as $n\to\infty$.

\subsection{Proof of Theorem \ref{t:fslln.hvt}}  \label{sec:proof.fslln.hvt}

For ease of the description, we first prove Part $(ii)$ and then proceed to the proof of Part $(i)$. The discussion for Part $(i)$ is more delicate, requiring to  make use of the concentration bounds in \cite{bachmann:reitzner:2018}, which is stated in Proposition \ref{p:concentration} below. 
All the technical results necessary for our proof are provided in Sections \ref{sec:ingredients} and \ref{sec:technical.lemma.heavy} of the Appendix. 

\begin{proof}[Proof of Theorem \ref{t:fslln.hvt} $(ii)$]

Note first that \eqref{e:fslln.heavy.weak.core} is implied by 
$$
\sup_{0\leq t\leq 1}\Big\lvert \frac{\beta_{k,n}(t)}{R_n^d}-\mu_k(t; \lambda)\Big\rvert \to 0, \ \ \ n\to\infty, \ \ \text{a.s.}
$$ 
We introduce the truncated  version of \eqref{e:def.betsum}; that is, for every $M\in \N$, 
\begin{equation}  \label{e:truncated.betti.M}
\beta_{k,n}^{(M)}(t):=\sum_{i=k+2}^M \sum_{j\geq 1}j J_{k,n}^\icj (t). 
\end{equation}
Analogously, we also define $\mu_k^{(M)}(t; \lambda)$ by the same truncation as above: 
$$
\mu_k^{(M)}(t; \lambda) := \sum_{i=k+2}^M \sum_{j\geq 1}j  \frac{\lambda^i}{i!}\, \mu_k^\icj(t; \lambda). 
$$
Then, it is easy to see that, for every $M\in \N$, 
\begin{align*}
\sup_{0\leq t\leq 1}\Big\lvert \frac{\beta_{k,n}(t)}{R_n^d}-\mu_{k}(t; \lambda)\Big\rvert &\leq \sup_{0\leq t\leq 1}\Big\{ \frac{\beta_{k,n}(t)}{R_n^d} - \frac{\beta^{(M)}_{k,n}(t)}{R_n^d}\Big\} + \sup_{0 \le t \le 1} \Big| \frac{\beta_{k,n}^{(M)}(t)}{R_n^d} - \mu_k^{(M)}(t; \lambda) \Big| \\
& \quad + \sup_{0\le t \le 1} \big( \mu_k(t; \lambda) - \mu_k^{(M)}(t; \lambda) \big). 
\end{align*}
From this it is sufficient to show that 
\begin{align}
&\lim_{n\to\infty} \sup_{0\le t \le 1}  \Big| \frac{\beta_{k,n}^{(M)}(t)}{R_n^d} - \mu_k^{(M)}(t; \lambda) \Big| = 0, \ \ \text{a.s. for all } M\in \N,  \label{e:suff.cond.heavy1}\\
&\lim_{M\to\infty} \limsup_{n\to\infty} \sup_{0\leq t\leq 1}\Big\{ \frac{\beta_{k,n}(t)}{R_n^d} - \frac{\beta^{(M)}_{k,n}(t)}{R_n^d}\Big\} =0, \ \ \text{a.s.}, \label{e:suff.cond.heavy2}\\
&\lim_{M\to\infty} \limsup_{n\to\infty} \sup_{0\le t \le 1} \big( \mu_k(t; \lambda) - \mu_k^{(M)}(t; \lambda) \big) =0. \label{e:suff.cond.heavy3}
\end{align}
Of the three requirements above, we first deal with \eqref{e:suff.cond.heavy1}. Since $M$ is finite, it is enough to demonstrate that, for every $i \ge k+2$ and $j\ge 1$, 
$$
\basicsup \Big| \, \frac{J_{k,n}^\icj(t)}{R_n^d} -\frac{\lambda^i}{i!}\, \mu_k^\icj (t; \lambda) \, \Big| \to 0, \ \ n\to\infty, \ \ \text{a.s.}
$$
Clearly, this convergence is obtained by 
\begin{align}
&\basicsup \Big| \, \frac{J_{k,n}^\ijp(t)}{R_n^d} -\frac{\lambda^i}{i!}\, \mu_k^\ijp (t; \lambda) \, \Big| \to 0, \ \ \text{a.s.}, \label{e:compwise.as.convergence} \\
&\basicsup \Big| \, \frac{J_{k,n}^{(i,j,-)}(t)}{R_n^d} -\frac{\lambda^i}{i!}\, \mu_k^{(i,j,-)} (t; \lambda) \, \Big| \to 0, \ \ \text{a.s.}, \notag
\end{align}
where $J_{k,n}^{(i,j,\pm)}(t)$ and $\mu_k^{(i,j,\pm)}(t; \lambda)$ are defined in \eqref{e:def.Jknijpm} and \eqref{e:def.mukijpm} respectively. 
We prove only the former convergence. One can actually handle the latter  in the same way. 
For this purpose, we extend \eqref{e:def.gntijpm},  \eqref{e:def.Jknijpm}, and \eqref{e:def.mukijpm} by adding an extra time parameter. Namely, for $\Y\in (\R^d)^i$, a finite subset $\Z\supset \Y$ of points in $\R^d$, $(t, s) \in [0,1]^2$, and $\lambda \ge 0$,  
\begin{align}
g^\ijp_{n,t,s}(\Y,\Z) &:= h_{n,t}^\ijp(\Y)\, \one \big\{ \cc(\Y, s) \text{ is a connected component of } \cc(\Z, s) \big\} \notag \\
&=h_{t}^\ijp(\Y)\, \one \big\{  \M(\Y)\ge R_n \big\}\, \one \big\{\B(\Y;s/2)\cap\B(\Z \setminus\Y;s/2)=\emptyset\big\}, \notag \\
J_{k,n}^\ijp (t,s)  &:=\sum_{\Y\subset\X_n, \,\lvert \Y \rvert =i}g^\ijp_{n,t,s}(\Y,\X_n), \label{e:def.Jknijpts} \\
\mu_k^\ijp (t,s; \lambda) &:= s_{d-1}\int_1^\infty\rho^{d-1-\alpha i}\int_{(\R^d)^{i-1}}h_t^\ijp(0,\by)e^{-\lambda\rho^{-\alpha}s^d \text{vol}\big(\B(\{0,\by\};1)\big)}d\by d\rho.   \label{e:def.mukijpts}
\end{align}
Clearly, we have that $g_{n,t,t}^\ijp(\Y, \Z) = g_{n,t}^\ijp(\Y,\Z)$, $J_{k,n}^\ijp (t,t) = J_{k,n}^\ijp (t)$, and $\mu_k^\ijp (t,t; \lambda) = \mu_k^\ijp (t; \lambda)$. 
It  immediately follows from \eqref{e:monotonicity.ind} that \eqref{e:def.Jknijpts} and \eqref{e:def.mukijpts} are non-decreasing in $t$ and non-increasing in $s$. Moreover, \eqref{e:def.mukijpts} is a continuous function in $(t,s)\in [0,1]^2$. Hence, 
according to Lemma \ref{l:pointwise.to.funcl} $(ii)$ in the Appendix, \eqref{e:compwise.as.convergence} follows from the pointwise SLLN, 
\begin{equation}  \label{e:pointwise.slln.Jkn}
\frac{J_{k,n}^\ijp(t,s)}{R_n^d} \to \frac{\lambda^i}{i!}\, \mu_k^\ijp (t,s;\lambda), \ \ n\to\infty,  \ \ \text{a.s.}, 
\end{equation}
for every $t,s \in [0,1]$. 

To show \eqref{e:pointwise.slln.Jkn}, note that for every $n\in \N$, there exists a unique $m=m(n)\in \N$ such that $v_m \le n <v_{m+1}$, where $v_m$ is defined in \eqref{e:def.vm}. 
Using $p_m$ and $q_m$ in  \eqref{e:def.pmqm} as well as $v_m$, we define 
\begin{equation}\label{e:def.upbd}
T_m^\ijupa(t,s):=\sum_{\substack{\Y\subset\X_{v_{m+1}} \\ \lvert \Y \rvert = i}}h^\ijp_t(\Y)\ind{\M(\Y)\geq R_{q_m}}\ind{\B(\Y;s/2)\cap\B(\X_{v_m}\setminus\Y;s/2)=\emptyset},
\end{equation}
\begin{equation}\label{e:def.lobd}
T^\ijdoa_m(t,s):=\sum_{\substack{\Y\subset\X_{v_m} \\ \lvert \Y \rvert = i}}h^\ijp_t(\Y)\ind{\M(\Y)\geq R_{p_m}}\ind{\B(\Y;s/2)\cap\B(\X_{v_{m+1}}\setminus\Y;s/2)=\emptyset}.
\end{equation} 
Since $\X_{v_m}\subset\X_n\subset\X_{v_{m+1}}$ and $R_{q_m}\leq R_n\leq R_{p_m}$ for all $n\in \N$, one can immediately derive that 
$$
\frac{T^\ijdoa_m(t,s)}{R_{p_m}^d}\le \frac{J_{k,n}^\ijp (t,s)}{R_n^d}\leq\frac{T^\ijupa_m(t,s)}{R_{q_m}^d}, 
$$
for all $n\in \N$. 
It follows from Lemma \ref{l:first.second.moments.heavy} $(i)$ that 
$$
\limsup_{n\to \infty} \frac{J_{k,n}^\ijp(t,s)}{R_n^d} \le \frac{\lambda^i}{i !}\, \mu_k^\ijp(t,s; \lambda) + \limsup_{m\to\infty} \frac{T_m^\ijupa (t,s)-\E[T_m^\ijupa (t,s)]}{R_{q_m}^d}, \ \ \text{a.s.}, 
$$
and 
$$
\liminf_{n\to \infty} \frac{J_{k,n}^\ijp(t,s)}{R_n^d} \ge \frac{\lambda^i}{i !}\, \mu_k^\ijp(t,s; \lambda) + \liminf_{m\to\infty} \frac{T_m^\ijdoa (t,s)-\E[T_m^\ijdoa (t,s)]}{R_{p_m}^d}, \ \ \text{a.s.}
$$
From these bounds, we now need to show that as $m\to\infty$, 
\begin{align}
&R_{q_m}^{-d}\, \Big(  T_m^\ijupa (t,s)-\E[T_m^\ijupa (t,s)] \Big) \to 0,  \ \ \text{a.s.}, \label{e:upper.weak.core.heavy} \\
&R_{p_m}^{-d}\, \Big(  T_m^\ijdoa (t,s)-\E[T_m^\ijdoa (t,s)] \Big) \to 0,  \ \ \text{a.s.} \label{e:lower.weak.core.heavy}
\end{align}
For \eqref{e:upper.weak.core.heavy}, Chebyshev's inequality and Lemma \ref{l:first.second.moments.heavy} $(i)$ yield that, for every $\epsilon>0$, 
\begin{align*}
\sum_{m=1}^\infty \P \Big( \big|\,  T_m^\ijupa (t,s)-\E[T_m^\ijupa (t,s)] \, \big| >\epsilon R_{q_m}^d  \Big) \le \frac{1}{\epsilon^2} \sum_{m=1}^\infty \frac{\text{Var}\big( T_m^\ijupa(t,s) \big)}{R_{q_m}^{2d}} \le C^* \sum_{m=1}^\infty \frac{1}{R_{q_m}^d}. 
\end{align*}
From the assumption that $nf(R_n) \to \lambda$ as $n\to\infty$, one can readily check that  $R_n\in \text{RV}_{1/\alpha}$; therefore, 
\begin{equation}  \label{e:R.asym.heavy}
R_{q_m}\ge C^* q_m^{1/(2\alpha)}\ge \cs v_m^{1/(2\alpha)}\ge \cs e^{m^\gamma/(3\alpha)}, 
\end{equation}
and we thus conclude that $\sum_m 1/R_{q_m}^d \le C^*\sum_m e^{-dm^\gamma/(3\alpha)}<\infty$. Now, the Borel-Cantelli lemma ensures \eqref{e:upper.weak.core.heavy}. The proof of \eqref{e:lower.weak.core.heavy} is analogous by virtue of Lemma \ref{l:first.second.moments.heavy} $(i)$. Now, \eqref{e:upper.weak.core.heavy} and \eqref{e:lower.weak.core.heavy} complete the proof of \eqref{e:pointwise.slln.Jkn}. 

Next we turn to condition \eqref{e:suff.cond.heavy2}. Since the $k$th Betti number of any simplicial complex on $i$ vertices is bounded above by the number of $k$-simplices, which is further bounded by ${i \choose k+1}$, we have that 
\begin{align}
\frac{1}{R_n^d}\big( \beta_{k,n}(t)-\beta^{(M)}_{k,n}(t)\big) &=\frac{1}{R_n^d}\sum_{i=M+1}^n\sum_{j\geq 1}j J_{k,n}^\icj(t) \label{e:Wmmt}   \\
&\le \frac{1}{R_n^d}\sum_{i=M+1}^n{i \choose k+1}\sum_{\Y\subset\X_n, \lvert\Y\rvert=i}\sum_{j\geq 1}h^\icj_{n,t}(\Y)\, \notag \\ 
&\le \frac{1}{R_n^d}\sum_{i=M+1}^n i^{k+1}\sum_{\Y\subset\X_n, \lvert\Y\rvert=i}\ind{\cc(\Y, t) \text{ is connected},\, \M(\Y)\geq R_n} \notag \\ 
&\le \frac{1}{R_{q_m}^d}\sum_{i=M+1}^\infty i^{k+1}\hspace{-10pt} \sum_{\Y\subset\X_{v_{m+1}}, \lvert\Y\rvert=i}\hspace{-10pt}\ind{\cc(\Y, t) \text{ is connected},\, \M(\Y)\geq R_{q_m}}\notag \\ 
&=:\frac{V_{m,M}(t)}{R_{q_m}^d}. \notag
\end{align}
By construction, $V_{m,M}(t)$ is a non-decreasing function in $t$. Thus, 
$$
\sup_{0\leq t\leq 1}\Big\{  \frac{\beta_{k,n}(t)}{R_n^d} - \frac{\beta^{(M)}_{k,n}(t)}{R_n^d}\Big\} \le \frac{V_{m,M}(1)}{R_{q_m}^d}, 
$$
and we now need to show that 
$$
\lim_{M\to\infty} \limsup_{m\to\infty} R_{q_m}^{-d} \, V_{m,M}(1) = 0, \ \ \ \text{a.s.}
$$
By Lemma \ref{l:first.second.moments.heavy} $(ii)$, it is sufficient to demonstrate that for every $M\in \N$, 
\begin{equation}  \label{e:centered.SLLN}
\frac{V_{m,M}(1)-\E\big[ V_{m,M} (1)\big]}{R_{q_m}^d} \to 0, \ \ \text{as } m\to\infty, \ \ \text{a.s.}
\end{equation}
For every $\epsilon>0$, it follows from Lemma \ref{l:first.second.moments.heavy} $(ii)$ and \eqref{e:R.asym.heavy} that 
\begin{align*}
&\sum_{m=1}^\infty \P \Big( \Big|\, V_{m,M}(1) -\E\big[ V_{m,M}(1) \big] \, \Big| >\epsilon R_{q_m}^d \Big) \\
&\le \frac{1}{\epsilon^2}\, \sum_{m=1}^\infty \frac{\text{Var}\big( V_{m,M}(1) \big)}{R_{q_m}^{2d}} \le C^* \sum_{m=1}^\infty \frac{1}{R_{q_m}^d}  \le C^* \sum_{m=1}^\infty e^{-dm^\gamma / (3\alpha)} <\infty. 
\end{align*}
Thus, the Borel-Cantelli lemma ensures \eqref{e:centered.SLLN}. Finally, we assert that the argument similar to (or even easier than) that for \eqref{e:suff.cond.heavy2} can establish \eqref{e:suff.cond.heavy3}. Now, the entire proof has been completed. 
\end{proof}

\begin{proof}[Proof of Theorem \ref{t:fslln.hvt} $(i)$] 
Our main idea  is to justify that  $\beta_{k,n}(t)$ in \eqref{e:def.betsum} can be approximated by 
\begin{equation}  \label{e:def.Gkn}
G_{k,n}(t) := \sum_{\substack{\Y\subset \X_n, \\ |\Y|=k+2}} h_{n,t}^{(k+2,1)} (\Y), \ \ \ t \ge 0. 
\end{equation}
This implies that    the \v{C}ech complexes are  asymptotically distributed sparsely with many separate connected components, with each consisting of only $k+2$ points. Note that ``$k+2$" is the minimum required number of points for a non-trivial $k$th Betti number of a \v{C}ech complex. 

As in the proof of Theorem \ref{t:fslln.hvt} $(ii)$, our goal is to show that 
$$
\basicsup \Big| \, \frac{\beta_{k,n}(t)}{\rho_n} - \frac{\mu_k^{(k+2,1)}(t; 0)}{(k+2)!} \, \Big| \to 0, \ \ \ n\to\infty, \ \ \text{a.s.}
$$
This clearly follows if one can establish that 
\begin{align}
&\basicsup \Big| \, \frac{G_{k,n}(t)}{\rho_n} - \frac{\mu_k^{(k+2,1)}(t; 0)}{(k+2)!} \, \Big| \to 0, \ \ \ n\to\infty, \ \ \text{a.s.}, \label{e:suff.cond.crackle1.heavy}\\
&\basicsup \Big| \, \frac{\beta_{k,n}(t)}{\rho_n} - \frac{G_{k,n}(t)}{\rho_n}  \, \Big| \to 0, \ \ \ n\to\infty, \ \ \text{a.s.} \label{e:suff.cond.crackle2.heavy}
\end{align}
For \eqref{e:suff.cond.crackle1.heavy}, we define
\begin{equation}  \label{e:def.Gknpmt}
G_{k,n}^\pm(t) := \sum_{\substack{\Y\subset \X_n, \\ |\Y|=k+2}} h_{n,t}^{(k+2,1, \pm)} (\Y), \ \ \ t \ge 0, 
\end{equation}
and demonstrate that 
\begin{equation}  \label{e:Gkn.pm.funcl.SLLN}
\basicsup \Big| \, \frac{G_{k,n}^+(t)}{\rho_n} - \frac{\mu_k^{(k+2,1,+)}(t; 0)}{(k+2)!} \, \Big| \to 0,  \ \ \text{a.s.,} \ \  \basicsup \Big| \, \frac{G_{k,n}^-(t)}{\rho_n} - \frac{\mu_k^{(k+2,1,-)}(t; 0)}{(k+2)!} \, \Big| \to 0,  \ \ \text{a.s.}, 
\end{equation}
where $\mu_k^{(k+2,1,\pm)}(t; 0)$ is defined at \eqref{e:def.mukijpm}. 
In the below, we show the result of the ``+" case only. 
One can immediately show that 
$$
\mu_k^{(k+2,1,+)}(t; 0) = t^{d(k+1)} \mu_k^{(k+2,1,+)}(1; 0), 
$$
which in turn indicates that $\mu_k^{(k+2,1,+)}(t; 0)$ is a continuous and increasing function in $t$. Moreover, $G_{k,n}^+(t)$ is also a non-decreasing function in $t$; hence, by Lemma \ref{l:pointwise.to.funcl} $(i)$, the first convergence in \eqref{e:Gkn.pm.funcl.SLLN} is obtained from the pointwise SLLN, 
$$
\frac{G_{k,n}^+(t)}{\rho_n} \to \frac{\mu_k^{(k+2,1,+)}(t; 0)}{(k+2)!}, \ \ \ n\to\infty, \ \ \text{a.s.}, 
$$ 
for every $t\in [0,1]$. As before, our next task is to detect the subsequential upper and lower bounds for $G_{k,n}^+(t)/\rho_n$. 
Let $(v_m)$ be a sequence defined in \eqref{e:def.vm}. Recall that for every $n \ge \N$, there exists a unique $m=m(n)\in \N$ such that $v_m \le n < v_{m+1}$. 
Next we define 
\begin{align}
S_m^\uparrow (t) &:= \sum_{\substack{\Y\subset \mathcal P_{v_{m+1}} \\ |\Y|=k+2}} h_t^{(k+2,1,+)}(\Y)\, \one\big\{ \M(\Y) \ge R_{q_m}  \big\}, \label{e:def.Smup}\\
S_m^\downarrow (t) &:= \sum_{\substack{\Y\subset \mathcal P_{v_m}, \\ |\Y|=k+2}} h_t^{(k+2,1,+)}(\Y)\, \one\big\{ \M(\Y) \ge R_{p_m}  \big\}, \label{e:def.Smdown}
\end{align}
where $p_m$ and $q_m$ are given in  \eqref{e:def.pmqm}. Furthermore,  
$\mathcal P_{v_{m+1}}$ and $ \mathcal P_{v_m}$ denote Poisson processes in $\R^d$ with intensity measures $v_{m+1}\int_\cdot f(z)dz$ and $v_m \int_\cdot f(z)dz$,  respectively. In other words, one can denote  $\mathcal P_{v_{m+1}} = \{ X_1,\dots,X_{N_{v_{m+1}}} \}$, where $X_1,X_2,\dots$ are i.i.d random points with density $f$, and $N_{v_{m+1}}$ is Poisson distributed with mean $v_{m+1}$, independently of $(X_i, \, i \ge 1)$. 

Since $\rho_n \ge v_m^{k+2} R_{s_m}^d f(R_{s_m})^{k+2}$, $\X_n \subset \X_{v_{m+1}}$, and $R_n \ge R_{q_m}$ (see \eqref{e:def.rmsm} for the definition of $s_m$), one can bound $G_{k,n}^+(t)/\rho_n$ in a way that 
\begin{align} 
\frac{G_{k,n}^+(t)}{\rho_n}  &\le \big( v_{m}^{k+2}R_{s_m}^df(R_{s_m})^{k+2} \big)^{-1} \sum_{\substack{\Y\subset \X_{v_{m+1}}, \\ |\Y|=k+2}} h_t^\ktop(\Y)\, \one \big\{ \M(\Y) \ge R_{q_m} \big\} \label{e:initial.upper.bdd.Gknrhon}\\
&= \frac{S_m^\uparrow(t)}{v_{m}^{k+2}R_{s_m}^df(R_{s_m})^{k+2}} +  \big( v_{m}^{k+2}R_{s_m}^df(R_{s_m})^{k+2} \big)^{-1} \notag \\
&\qquad \qquad \qquad \qquad \qquad \times \bigg\{  \sum_{\substack{\Y\subset \X_{v_{m+1}}, \\ |\Y|=k+2}} h_t^\ktop(\Y)\, \one \big\{ \M(\Y) \ge R_{q_m} \big\}  -S_m^\uparrow(t)\bigg\}, \ \ n\ge 1. \notag
\end{align}  
By Lemmas \ref{l:RV.seq.heavy} and \ref{l:difference.binom.poisson}, 
\begin{align}
&\big( v_{m}^{k+2}R_{s_m}^df(R_{s_m})^{k+2} \big)^{-1}  \bigg\{  \sum_{\substack{\Y\subset \X_{v_{m+1}}, \\ |\Y|=k+2}} h_t^\ktop(\Y)\, \one \big\{ \M(\Y) \ge R_{q_m} \big\}  -S_m^\uparrow(t)\bigg\}  \label{e:diff.binom.poisson}\\
&=\Big( \frac{v_{m+1}}{v_m} \Big)^{k+2} \Big( \frac{R_{q_m}}{R_{s_m}} \Big)^{d} \bigg( \frac{f(R_{q_m})}{f(R_{s_m})} \bigg)^{k+2} \big( v_{m+1}^{k+2}R_{q_m}^df(R_{q_m})^{k+2} \big)^{-1}  \notag \\
&\qquad \qquad \qquad \qquad \times \bigg\{  \sum_{\substack{\Y\subset \X_{v_{m+1}}, \\ |\Y|=k+2}} h_t^\ktop(\Y)\, \one \big\{ \M(\Y) \ge R_{q_m} \big\}  -S_m^\uparrow(t)\bigg\} \to 0, \ \ m\to\infty, \ \ \text{a.s.} \notag  
\end{align}
Now, by Lemmas \ref{l:RV.seq.heavy} and \ref{l:first.second.asymptotics} as well as \eqref{e:diff.binom.poisson}, 
\begin{align}
\limsup_{n\to\infty} \frac{G_{k,n}^+(t)}{\rho_n} &\le \limsup_{m\to\infty} \frac{S_m^\uparrow(t)}{v_{m}^{k+2}R_{s_m}^df(R_{s_m})^{k+2}}\label{e:upper.Gkn.rhon} \\
&=\limsup_{m\to\infty} \Big( \frac{v_{m+1}}{v_m} \Big)^{k+2} \Big( \frac{R_{q_m}}{R_{s_m}} \Big)^d \bigg( \frac{f(R_{q_m})}{f(R_{s_m})} \bigg)^{k+2} \frac{S_m^\uparrow(t)}{v_{m+1}^{k+2}R_{q_m}^df(R_{q_m})^{k+2}} \notag \\
&\le \limsup_{m\to\infty} \frac{S_m^\uparrow(t)}{v_{m+1}^{k+2}R_{q_m}^df(R_{q_m})^{k+2}} \notag \\
&\le \frac{\mu_k^\ktop(t; 0)}{(k+2)!} + \limsup_{m\to\infty} \frac{S_m^\uparrow(t)-\E[S_m^\uparrow(t)]}{v_{m+1}^{k+2}R_{q_m}^df(R_{q_m})^{k+2}}, \ \ \text{a.s.} \notag 
\end{align}
As for the lower bound of $G_{k,n}^+(t)/\rho_n$, noting that $\rho_n \le v_{m+1}^{k+2} R_{r_m}^d f(R_{r_m})^{k+2}$, $\X_n\supset \X_{v_m}$, and $R_n \le R_{p_m}$ (see \eqref{e:def.rmsm} for the definition of $r_m$), we obtain that 
\begin{align*} 
\frac{G_{k,n}^+(t)}{\rho_n}  &\ge  \frac{S_m^\downarrow(t)}{v_{m+1}^{k+2}R_{r_m}^df(R_{r_m})^{k+2}} +  \big( v_{m+1}^{k+2}R_{r_m}^df(R_{r_m})^{k+2} \big)^{-1} \\
&\qquad \qquad \qquad \qquad \qquad \times \bigg\{  \sum_{\substack{\Y\subset \X_{v_{m}}, \\ |\Y|=k+2}} h_t^\ktop(\Y)\, \one \big\{ \M(\Y) \ge R_{p_m} \big\}  -S_m^\downarrow(t)\bigg\}. 
\end{align*}  
Repeating the same argument as in \eqref{e:upper.Gkn.rhon} and using Lemmas \ref{l:RV.seq.heavy}, \ref{l:first.second.asymptotics}, and \ref{l:difference.binom.poisson}, 
\begin{equation}  \label{e:lower.bdd.Gkn.rho}
\liminf_{n\to\infty} \frac{G_{k,n}^+(t)}{\rho_n} \ge \frac{\mu_k^\ktop(t; 0)}{(k+2)!} + \liminf_{m\to\infty} \frac{S_m^\downarrow(t)-\E[S_m^\downarrow(t)]}{v_{m}^{k+2}R_{p_m}^df(R_{p_m})^{k+2}}, \ \ \text{a.s.}
\end{equation}
From \eqref{e:upper.Gkn.rhon} and \eqref{e:lower.bdd.Gkn.rho}, it now suffices to demonstrate that, for every $\epsilon>0$, 
\begin{align*}
\limsup_{m\to\infty} \frac{S_m^\uparrow(t)-\E[S_m^\uparrow(t)]}{v_{m+1}^{k+2}R_{q_m}^df(R_{q_m})^{k+2}} &\le \epsilon, \ \ \text{a.s.,} \ \ \text{and }   \ \ \ \liminf_{m\to\infty} \frac{S_m^\downarrow(t)-\E[S_m^\downarrow(t)]}{v_{m}^{k+2}R_{p_m}^df(R_{p_m})^{k+2}} \ge -\epsilon, \ \ \text{a.s.} 
\end{align*}
According to the Borel-Cantelli lemma, we need to show that 
\begin{align}
&\sum_{m=1}^\infty \P \big( S_m^\uparrow(t) >  \E[S_m^\uparrow(t)] + \epsilon v_{m+1}^{k+2} R_{q_m}^d f(R_{q_m})^{k+2} \big) < \infty,  \label{e:BC.upper} \\
&\sum_{m=1}^\infty \P \big( S_m^\downarrow(t) <   \E[S_m^\downarrow(t)] - \epsilon v_{m}^{k+2} R_{p_m}^d f(R_{p_m})^{k+2}   \big) < \infty.  \label{e:BC.lower}
\end{align}
For the proof of \eqref{e:BC.upper},  note that $S_m^\uparrow(t)$ is a Poisson $U$-statistics of order $k+2$, satisfying \eqref{e:indicator.condition1} and \eqref{e:indicator.condition2}, for which an underlying Poisson point process has a finite intensity measure $v_{m+1}\int_{\cdot \, \cap B(0,R_{q_m})^c}f(z)dz$.  From this observation, one can appeal to Proposition \ref{p:concentration} to get that 
\begin{align*}
&\P \big( S_m^\uparrow(t) >  \E[S_m^\uparrow(t)] + \epsilon v_{m+1}^{k+2} R_{q_m}^d f(R_{q_m})^{k+2} \big) \\
&\le \exp \bigg\{ -C^* \Big[ \Big(  \E[S_m^\uparrow(t)]  + \epsilon  v_{m+1}^{k+2} R_{q_m}^d f(R_{q_m})^{k+2}   \Big)^{1/(2k+4)} - \big( \E[S_m^\uparrow(t)] \big)^{1/(2k+4)}  \Big]^2  \bigg\}. 
\end{align*}
By Lemmas \ref{l:RV.seq.heavy} and \ref{l:first.second.asymptotics}, and \eqref{e:cond:rhon}, 
\begin{align*}
&\Big[ \Big(  \E[S_m^\uparrow(t)]  + \epsilon  v_{m+1}^{k+2} R_{q_m}^d f(R_{q_m})^{k+2}   \Big)^{1/(2k+4)} - \big( \E[S_m^\uparrow(t)] \big)^{1/(2k+4)}  \Big]^2\\
&\ge C^* \big( v_{m+1}^{k+2} R_{q_m}^d f(R_{q_m})^{k+2} \big)^{1/(k+2)} \ge C^* \big( v_m^{k+2} R_{v_m}^d f(R_{v_m})^{k+2} \big)^{1/(k+2)} \\
&\ge C^*(\log v_m)^{\eta/(k+2)} \ge C^* m^{\gamma \eta / (k+2)}. 
\end{align*}
In conclusion, 
$$
\P \big( S_m^\uparrow(t) >  \E[S_m^\uparrow(t)] + \epsilon v_{m+1}^{k+2} R_{q_m}^d f(R_{q_m})^{k+2} \big)  \le \exp\big\{ -C^*m^{\gamma \eta/(k+2)} \big\}, 
$$
so that $\sum_m\exp\{ -C^*m^{\gamma \eta/(k+2)} \} <\infty$ as desired. 

We proceed to the proof of \eqref{e:BC.lower}. 
Applying the second concentration inequality in Proposition \ref{p:concentration}, it holds that 
\begin{align*}
&\P \big( S_m^\downarrow(t) <   \E[S_m^\downarrow(t)] - \epsilon v_{m}^{k+2} R_{p_m}^d f(R_{p_m})^{k+2}   \big) \le \exp \bigg\{ - \frac{C^*\big[  \epsilon v_{m}^{k+2}R_{p_m}^df(R_{p_m})^{k+2}  \big]^2}{ \text{Var}(S_m^\downarrow(t)) }  \bigg\}. 
\end{align*}
Because of Lemmas \ref{l:RV.seq.heavy} and \ref{l:first.second.asymptotics}, as well as \eqref{e:cond:rhon}, 
\begin{align*}
\big(v_{m}^{k+2}R_{p_m}^df(R_{p_m})^{k+2}  \big)^2 \big( \text{Var}(S_m^\downarrow(t)) \big)^{-1} &\ge C^* v_{m}^{k+2}R_{p_m}^df(R_{p_m})^{k+2}  \ge C^* v_m^{k+2} R_{v_m}^d f(R_{v_m})^{k+2} \\
&\ge C^* (\log v_m)^\eta \ge C^*m^{\gamma \eta}. 
\end{align*}
We now conclude that 
\begin{align*}
\P \big( S_m^\downarrow(t) <   \E[S_m^\downarrow(t)] - \epsilon v_{m}^{k+2} R_{p_m}^d f(R_{p_m})^{k+2}   \big) \le \exp\big\{ -C^*m^{\gamma \eta} \big\}, 
\end{align*}
so that $\sum_m e^{-C^*m^{\gamma \eta}}<\infty$. This completes the proof of \eqref{e:BC.lower}, and thus, \eqref{e:suff.cond.crackle1.heavy} follows. 

The next step is to establish \eqref{e:suff.cond.crackle2.heavy}. 
By virtue of Lemma \ref{l:upper.lower.betti}, we obtain that 
\begin{equation}  \label{e:bound.betti.Gkn}
\big| \beta_{k,n}(t) -\Gknt  \big| \le \Gknt - J_{k,n}^{(k+2,1)} (t)+ \binom{k+3}{k+1} L_{k,n}(t), 
\end{equation}
where 
$$
L_{k,n}(t):= \sum_{\substack{\Y\subset \X_n, \\ |\Y|=k+3}} \one \big\{  \cc (\Y, t) \text{ is connected}, \, \M(\Y)\ge R_n \big\}. 
$$
One can further bound the right hand side in \eqref{e:bound.betti.Gkn} as follows: 
\begin{align*}
&\Gknt -J_{k,n}^{(k+2,1)}(t) + \binom{k+3}{k+1} L_{k,n}(t) \\
&\le \sum_{\Y\subset \X_n, \, |\Y|=k+2} \one \big\{ \C(\Y, t) \text{ is connected}, \, \M(\Y)\ge R_n \big\}    \\
&\qquad \qquad \qquad \times \one \big\{ \C(\Y, t) \text{ is not a connected component of } \C(\X_n, t) \big\}  + \binom{k+3}{k+1} L_{k,n}(t)   \\
&\le \bigg( k+3 + \binom{k+3}{k+1} \bigg) L_{k,n}(t) \le 10k^2 L_{k,n}(t).  
\end{align*}
Observing that $\X_n \subset \X_{v_{m+1}}$, $R_n\ge R_{q_m}$ and $\one \big\{ \C(\Y, t) \text{ is connected} \big\}$ is a non-decreasing function in $t$, 
\begin{align*}
 L_{k,n}(t) &\le \sum_{\substack{\Y\subset \X_{v_{m+1}}, \\ |\Y|=k+3}} \one \big\{ \C(\Y, 1) \text{ is connected}, \, \M(\Y)\ge R_{q_m} \big\}. 
\end{align*}
Since $\rho_n \ge v_m^{k+2}R_{s_m}^d f(R_{s_m})^{k+2}$, we have, for every $n\ge 1$, 
\begin{align*}
\basicsup \Big|\, \frac{\beta_{k,n}(t)}{\rho_n} - \frac{G_{k,n}(t)}{\rho_n} \, \Big|
\le \frac{10k^2}{v_m^{k+2}R_{s_m}^d f(R_{s_m})^{k+2}}  \sum_{\substack{\Y\subset \X_{v_{m+1}}, \\ |\Y|=k+3}} \one \big\{ \C(\Y, 1) \text{ is connected}, \, \M(\Y)\ge R_{q_m} \big\}. 
\end{align*}
We now define
\begin{equation}  \label{e:def.Wm}
W_m(t) := \sum_{\substack{\Y \subset \mathcal P_{v_{m+1}}, \\  |\Y|=k+3}} \one \big\{ \C(\Y, t) \text{ is connected}, \, \M(\Y)\ge R_{q_m} \big\}, \ \ t\in [0,1]. 
\end{equation}
This is a Poisson $U$-statistics of order $k+3$, such that $s(\Y) := \one \big\{ \C(\Y, t) \text{ is connected} \big\}$ fulfills conditions \eqref{e:indicator.condition1} and \eqref{e:indicator.condition2}. 
Then, Lemmas \ref{l:RV.seq.heavy} and \ref{l:difference.binom.poisson} guarantee that the proof of \eqref{e:suff.cond.crackle2.heavy} will be complete if one can verify that
\begin{equation}  \label{e:cond.a}
\frac{W_m(1)}{v_{m+1}^{k+2}R_{q_m}^d f(R_{q_m})^{k+2}} \to 0, \ \  \text{a.s.}
\end{equation}
By the Borel-Cantelli lemma, \eqref{e:cond.a} is implied by 
$$
\sum_{m=1}^\infty \P\big( W_m(1) > \epsilon v_{m+1}^{k+2} R_{q_m}^d f(R_{q_m})^{k+2} \big) < \infty, 
$$  
for every $\epsilon>0$. 
Lemmas \ref{l:RV.seq.heavy} and \ref{l:first.second.asymptotics} yield that as $m\to\infty$, 
\begin{align}  
\E\big[ W_m(1) \big] &\le C^*v_{m+1}^{k+3} R_{q_m}^d f(R_{q_m})^{k+3}  \le C^* v_{m+1}^{k+2} R_{q_m}^d f(R_{q_m})^{k+2} v_m f(R_{v_m}) \label{e:EVmiT} \\
&=o \big( v_{m+1}^{k+2} R_{q_m}^d f(R_{q_m})^{k+2}  \big), \notag
\end{align}
where the last equality comes from $v_mf(R_{v_m}) \to 0$ as $m\to\infty$. Now, there exists $N\in \N$, such that for all $m\ge N$, 
\begin{align*}
&\P\big( W_m(1) > \epsilon v_{m+1}^{k+2} R_{q_m}^d f(R_{q_m})^{k+2} \big) \\
&\le \P \Big(W_m(1) -\E\big[ W_m(1)  \big] > \frac{\epsilon}{2}\,  v_{m+1}^{k+2} R_{q_m}^d f(R_{q_m})^{k+2}  \Big) \\
&\le \exp \bigg\{  -C^* \Big[ \Big( \E\big[W_m(1)\big] + \frac{\epsilon}{2}\,  v_{m+1}^{k+2} R_{q_m}^d f(R_{q_m})^{k+2}  \Big)^{1/(2k+6)} - \Big(  \E\big[ W_m(1) \big]\Big)^{1/(2k+6)} \Big]^2 \bigg\}. 
\end{align*}
The last inequality above is a direct result of Proposition \ref{p:concentration}. 
It follows from \eqref{e:EVmiT} that 
$$
\frac{\E\big[ W_m(1) \big]}{v_{m+1}^{k+2}R_{q_m}^d f(R_{q_m})^{k+2}} \to 0, \ \ m\to\infty. 
$$
This, together with Lemma \ref{l:RV.seq.heavy} and \eqref{e:cond:rhon}, gives that 
\begin{align*}
&\Big[ \Big( \E\big[W_m(1)\big] + \frac{\epsilon}{2}\,  v_{m+1}^{k+2} R_{q_m}^d f(R_{q_m})^{k+2}  \Big)^{1/(2k+6)} - \Big(  \E\big[ W_m(1) \big]\Big)^{1/(2k+6)} \Big]^2  \\
&\ge C^*\big( v_{m+1}^{k+2} R_{q_m}^d f(R_{q_m})^{k+2} \big)^{1/(k+3)} \ge C^* \big( v_m^{k+2} R_{v_m}^d f(R_{v_m})^{k+2} \big)^{1/(k+3)} \\
&\ge C^*(\log v_m)^{\eta/(k+3)} \ge C^*m^{\gamma \eta/(k+3)}. 
\end{align*}
Now we get that 
$$
\sum_{m=1}^\infty  \P\big( W_m(1) > \epsilon v_{m+1}^{k+2} R_{q_m}^d f(R_{q_m})^{k+2} \big)  \le C^*\sum_{m=1}^\infty \exp \big\{ -C^*m^{\gamma \eta/(k+3)} \big\} < \infty, 
$$
and the Borel-Cantelli lemma concludes \eqref{e:cond.a}, as desired. 
\end{proof}

\subsection{Proof of Theorem \ref{t:fslln.expdt}}   \label{sec:proof.fslln.exp}

This section is divided into two parts. For ease of the description, the first part is devoted to proving Theorem \ref{t:fslln.expdt} $(ii)$, while the second part treats Theorem \ref{t:fslln.expdt} $(i)$.  Here we exploit the results in Sections \ref{sec:ingredients} and \ref{sec:technical.lemma.exp}.  In particular, the concentration bounds in Proposition \ref{p:concentration} will play a key role in the proof of Theorem \ref{t:fslln.expdt} $(i)$.
As our proof  is similar in nature to that of Theorem \ref{t:fslln.hvt}, we occasionally skip detailed arguments. 

\begin{proof}[Proof of Theorem \ref{t:fslln.expdt} $(ii)$]
We first truncate $\beta_{k,n}(t)$ in the same way as \eqref{e:truncated.betti.M} and define also $\xi_k^{(M)}(t; \lambda)$ by the same truncation. With the same reasoning as in the proof of Theorem \ref{t:fslln.hvt} $(ii)$,  
 the required functional SLLN can be obtained as a result of the following statements. 
\begin{align}
&\lim_{n\to\infty} \sup_{0\le t \le 1}  \Big| \frac{\beta_{k,n}^{(M)}(t)}{a(R_n)R_n^{d-1}} - \xi_k^{(M)}(t; \lambda) \Big| = 0, \ \ \text{a.s. for all } M\in \N,  \label{e:suff.cond.exp1}\\
&\lim_{M\to\infty} \limsup_{n\to\infty} \sup_{0\leq t\leq 1}\Big\{ \frac{\beta_{k,n}(t)}{a(R_n)R_n^{d-1}} - \frac{\beta^{(M)}_{k,n}(t)}{a(R_n)R_n^{d-1}}\Big\} =0, \ \ \text{a.s.}, \label{e:suff.cond.exp2}\\
&\lim_{M\to\infty} \limsup_{n\to\infty} \sup_{0\le t \le 1} \big( \xi_k(t; \lambda) - \xi_k^{(M)}(t; \lambda) \big) =0. \label{e:suff.cond.exp3}
\end{align}
For \eqref{e:suff.cond.exp1}, it suffices to prove that for every $i\ge k+2$ and $j\ge1$, 
\begin{equation*} 
\basicsup \Big|\, \frac{J_{k,n}^\icj (t)}{a(R_n)R_n^{d-1}} -\frac{\lambda^i}{i!}\, \xi_k^\icj (t; \lambda)  \, \Big| \to 0, \ \ n\to\infty, \ \ \text{a.s.}, 
\end{equation*}
which itself is  implied by 
\begin{align}
\basicsup \Big|\, \frac{J_{k,n}^\ijp (t)}{a(R_n)R_n^{d-1}} -\frac{\lambda^i}{i!}\, \xi_k^\ijp (t; \lambda)  \, \Big| \to 0, \ \ \text{a.s.},   \label{e:comp.wise.fslln} \\
\basicsup \Big|\, \frac{J_{k,n}^{(i,j,-)} (t)}{a(R_n)R_n^{d-1}} -\frac{\lambda^i}{i!}\, \xi_k^{(i,j,-)} (t; \lambda)  \, \Big| \to 0, \ \ \text{a.s.}, \notag
\end{align}
where $J_{k,n}^{(i,j,\pm)}(t)$ and $\xi_k^{(i,j,\pm)}(t; \lambda)$ are defined in \eqref{e:def.Jknijpm} and \eqref{e:def.xikijpm} respectively. As before, we focus only on the asymptotics of the ``$+$" part. For this purpose, we again extend $J_{k,n}^\ijp (t)$ above to $J_{k,n}^\ijp (t,s)$ in the same way as \eqref{e:def.Jknijpts}. Additionally, we also define 
\begin{align}
\xi_k^{(i,j, +)}(t, s; \lambda):=&\int_{S^{d-1}}\int_0^\infty \int_{(\R^d)^{i-1}}h_t^\ijp(0,\by)\, e^{-\rho i -c^{-1}\sum_{\ell=1}^{i-1}\langle\theta,y_\ell \rangle} \prod_{\ell=1}^{i-1} \one \big\{ \rho+c^{-1}\langle \theta, y_\ell\rangle \ge 0 \big\} \label{e:def.xikijpts} \\
&\quad \times\exp\Big\{-\lambda e^{-\rho}\int_{\B(\{0,\by\};  s)} e^{-c^{-1} \langle \theta, z \rangle}\,  dz \Big\}d\by\, d\rho\, J(\theta)\, d\theta, \ \ \ t,s \in [0,1]. \notag
\end{align}
Owing to the monotonicity in \eqref{e:monotonicity.ind}, one can see that \eqref{e:def.xikijpts} is non-decreasing in $t$ and non-increasing in $s$. Additionally, \eqref{e:def.xikijpts} is a continuous function on $[0,1]^2$. Thus, by Lemma \ref{l:pointwise.to.funcl} $(ii)$, showing 
\begin{equation}  \label{e:pointwise.SLLN.two.exp}
\frac{J_{k,n}^\ijp (t, s)}{a(R_n)R_n^{d-1}} \to \frac{\lambda^i}{i!}\, \xi_k^\ijp (t, s; \lambda), \ \ \ n\to\infty, \ \ \text{a.s.}
\end{equation}
for every $(t,s)\in [0,1]^2$, will suffice for \eqref{e:comp.wise.fslln}.  

To show \eqref{e:pointwise.SLLN.two.exp}, we take a constant $\gamma\in \big( \tau/(d-\tau),1\big)$ and define $v_m:= \lfloor e^{m^\gamma} \rfloor$, $m=0,1,2,\dots$ as in \eqref{e:def.vm}. Recall that the range of $\tau$ is restricted to $(0,1)$ when $d=2$, so one can always take such $\gamma$. For every $n\in \N$, there exists a unique $m=m(n)\in \N$ such that $v_m \le n <v_{m+1}$. 
Additionally, let $p_m$, $q_m$, $b_m$, and $c_m$ be defined as in \eqref{e:def.pmqm} and \eqref{e:def.bmcm} respectively. 
Defining $T_m^\ijupa (t,s)$ and $T_m^\ijdoa (t,s)$  as in \eqref{e:def.upbd} and \eqref{e:def.lobd}, it is now easy to see that 
$$
\frac{T_m^\ijdoa(t,s)}{a(R_{b_m})R_{b_m}^{d-1}} \le \frac{J_{k,n}^\ijp(t,s)}{a(R_n)R_n^{d-1}} \le \frac{T_m^\ijupa(t,s)}{a(R_{c_m})R_{c_m}^{d-1}} 
$$
for all $n\in \N$. By Lemmas \ref{l:RV.seq.exp1} and \ref{l:first.second.moments.exp1} $(i)$, 
\begin{align}
\limsup_{n\to\infty}\frac{J_{k,n}^\ijp(t,s)}{a(R_n)R_n^{d-1}} &\le \limsup_{m\to\infty} \frac{a(R_{q_m})R_{q_m}^{d-1}}{a(R_{c_m})R_{c_m}^{d-1}} \cdot \frac{T_m^\ijupa (t,s)}{a(R_{q_m})R_{q_m}^{d-1}}  \label{e:rightmost.limsup.Jknijp} \\
&\le \limsup_{m\to\infty} \frac{T_m^\ijupa (t,s)}{a(R_{q_m})R_{q_m}^{d-1}} \notag \\
&\le \frac{\lambda^i}{i!}\, \xi_k^\ijp (t,s; \lambda) + \limsup_{m\to\infty} \frac{T_m^\ijupa (t,s)-\E\big[ T_m^\ijupa (t,s) \big]}{a(R_{q_m})R_{q_m}^{d-1}}, \ \ \text{a.s.},\notag 
\end{align}
and similarly, 
\begin{equation} \label{e:lower.bdd.Jknijp}
\liminf_{n\to\infty}\frac{J_{k,n}^\ijp(t,s)}{a(R_n)R_n^{d-1}} \ge \frac{\lambda^i}{i!}\, \xi_k^\ijp (t,s; \lambda)+ \liminf_{m\to\infty} \frac{T_m^\ijdoa (t,s)-\E\big[ T_m^\ijdoa (t,s) \big]}{a(R_{p_m})R_{p_m}^{d-1}}, \ \ \text{a.s.}
\end{equation}
For the application of the Borel-Cantelli lemma to the rightmost term in \eqref{e:rightmost.limsup.Jknijp}, we have, for every $\epsilon>0$, 
\begin{align*}
&\sum_{m=1}^\infty \P \Big( \big| \,T_m^\ijupa (t,s)-\E\big[ T_m^\ijupa (t,s) \big]  \, \big| >\epsilon a(R_{q_m})R_{q_m}^{d-1} \Big) \\
&\le \frac{1}{\epsilon^2} \sum_{m=1}^\infty \frac{\text{Var}\big( T_m^\ijupa (t,s) \big)}{\big( a(R_{q_m})R_{q_m}^{d-1} \big)^2}  \le C^* \sum_{m=1}^\infty \frac{1}{a(R_{q_m})R_{q_m}^{d-1}}, 
\end{align*}
where Lemma \ref{l:first.second.moments.exp1} $(i)$ is applied for the second inequality. 
Due to the constraint $\gamma\in \big(\tau/(d-\tau), 1  \big)$, one can choose $\delta_i>0$, $i=1,2$, so that 
$$
\gamma (d-\tau-\delta_1) \Big( \frac{1}{\tau} -\delta_2 \Big) > 1. 
$$
Observing that $a\in \text{RV}_{1-\tau}$, we have $a(R_{q_m})R_{q_m}^{d-1}\ge C^* R_{q_m}^{d-\tau-\delta_1}$ for all $m\in \N$. Moreover, it follows from $\psi^\leftarrow \in \text{RV}_{1/\tau}$ (see Proposition 2.6 in \cite{resnick:2007}) and \eqref{e:Rn.psi.inv} that 
$$
R_{q_m} \sim \psi^\leftarrow (\log q_m) \ge \psi^\leftarrow (\log v_m) \ge C^* (\log v_m)^{1/\tau - \delta_2/2} \ge C^* m^{\gamma (1/\tau -\delta_2)}. 
$$
Therefore, 
$$
a(R_{q_m}) R_{q_m}^{d-1} \ge C^* m^{\gamma (d-\tau -\delta_1)(1/\tau -\delta_2)}, 
$$
so that 
\begin{equation}  \label{e:aRqm.Rqmd}
\sum_{m=1}^\infty \frac{1}{a(R_{q_m})R_{q_m}^{d-1}} \le C^* \sum_{m=1}^\infty \frac{1}{m^{\gamma (d-\tau -\delta_1)(1/\tau -\delta_2)}} <\infty. 
\end{equation}
Now, the Borel-Cantelli lemma verifies that 
$$
\frac{T_m^\ijupa (t,s)-\E\big[ T_m^\ijupa (t,s) \big]}{a(R_{q_m})R_{q_m}^{d-1}} \to 0, \ \ m\to\infty, \ \ \text{a.s.}, 
$$
and hence, 
$$
\limsup_{n\to\infty}\frac{J_{k,n}^\ijp(t,s)}{a(R_n)R_n^{d-1}} \le \frac{\lambda^i}{i!}\, \xi_k^\ijp (t,s; \lambda),  \ \ \text{a.s.}
$$
Applying the similar argument to \eqref{e:lower.bdd.Jknijp}, we also get that 
$$
\liminf_{n\to\infty} \frac{J_{k,n}^\ijp(t,s)}{a(R_n)R_n^{d-1}} \ge \frac{\lambda^i}{i!}\, \xi_k^\ijp (t,s; \lambda),  \ \ \text{a.s.}; 
$$
hence, the proof of \eqref{e:suff.cond.exp1} has been completed. 

Our next task is to verify \eqref{e:suff.cond.exp2}.  Repeating the same analysis as in \eqref{e:Wmmt}, while using the same notation $V_{m,M}(t)$, we obtain that 
\begin{align}
\frac{\beta_{k,n}(t)-\beta^{(M)}_{k,n}(t)}{a(R_n)R_n^{d-1}} &\le \frac{1}{a(R_n)R_n^{d-1}}\sum_{i=M+1}^n i^{k+1}\sum_{\Y\subset\X_n, \lvert\Y\rvert=i}\ind{\cc(\Y, t) \text{ is connected},\, \M(\Y)\geq R_n} \notag \\ 
&\le \frac{1}{a(R_{c_m})R_{c_m}^{d-1}}\sum_{i=M+1}^\infty i^{k+1}\hspace{-10pt} \sum_{\Y\subset\X_{v_{m+1}}, \lvert\Y\rvert=i}\hspace{-10pt}\ind{\cc(\Y, t) \text{ is connected},\, \M(\Y)\geq R_{q_m}} \notag \\
&=:\frac{V_{m,M}(t)}{a(R_{c_m})R_{c_m}^{d-1} }. \notag
\end{align}
Since $V_{m,M}(t)$ is a non-decreasing function in $t$, we now need to show that 
$$
\lim_{M\to\infty} \limsup_{m\to\infty} \frac{V_{m,M}(1)}{a(R_{c_m})R_{c_m}^{d-1}} = 0, \ \ \text{a.s.}
$$
By Lemmas \ref{l:RV.seq.exp1} and \ref{l:first.second.moments.exp1} $(ii)$, it is enough to prove that for every $M\in \N$, 
$$
\frac{V_{m,M}(1)-\E\big[ V_{m,M}(1) \big]}{a(R_{q_m})R_{q_m}^{d-1}} \to 0, \ \ \ m\to\infty, \ \ \text{a.s.}
$$
To apply the Borel-Cantelli lemma, notice from Lemma \ref{l:first.second.moments.exp1} $(ii)$ that for every $\epsilon>0$, 
\begin{align*}
&\sum_{m=1}^\infty \P \Big(\big|\,  V_{m,M}(1)-\E[V_{m,M}(1)] \, \big| >\epsilon a(R_{q_m}) R_{q_m}^{d-1}  \Big) \\
&\le \frac{1}{\epsilon^2} \sum_{m=1}^\infty \frac{\text{Var}\big( V_{m,M}(1) \big)}{\big( a(R_{q_m})R_{q_m}^{d-1} \big)^2} \le C^* \sum_{m=1}^\infty \frac{1}{a(R_{q_m})R_{q_m}^{d-1}}. 
\end{align*}
The series above is convergent as shown in \eqref{e:aRqm.Rqmd}, and thus, \eqref{e:suff.cond.exp2} follows as desired. Finally, as in the proof of Theorem \ref{t:fslln.hvt} $(ii)$, we will skip the proof of \eqref{e:suff.cond.exp3}. 
\end{proof}
\bigskip

\begin{proof}[Proof of Theorem \ref{t:fslln.expdt} $(i)$]
By the same reasoning as in the proof of Theorem \ref{t:fslln.hvt} $(i)$, it is sufficient to demonstrate that 
\begin{align}
&\basicsup \Big| \, \frac{G_{k,n}(t)}{\eta_n} - \frac{\xi_k^{(k+2,1)}(t; 0)}{(k+2)!} \, \Big| \to 0, \ \ \ n\to\infty, \ \ \text{a.s.}, \label{e:suff.cond.crackle1.exp}\\
&\basicsup \Big| \, \frac{\beta_{k,n}(t)}{\eta_n} - \frac{G_{k,n}(t)}{\eta_n}  \, \Big| \to 0, \ \ \ n\to\infty, \ \ \text{a.s.}, \label{e:suff.cond.crackle2.exp}
\end{align}
where $G_{k,n}(t)$ is defined in \eqref{e:def.Gkn}. For \eqref{e:suff.cond.crackle1.exp}, we decompose $G_{k,n}(t)$ and $\xi_k^{(k+2,1)}(t; 0)$ as $G_{k,n}(t)=G_{k,n}^+(t)-G_{k,n}^-(t)$ and $\xi_k^{(k+2,1)}(t; 0) = \xi_k^{(k+2,1, +)}(t; 0) - \xi_k^{(k+2,1, -)}(t; 0)$ (see \eqref{e:def.Gknpmt} and \eqref{e:def.xikijpm} respectively). We discuss only the asymptotics of the ``$+$" part. We then recall that $G_{k,n}^+(t)$ and $\xi_k^{(k+2,1, +)}(t; 0)$ are both non-decreasing in $t$, and $\xi_k^{(k+2,1, +)}(t; 0)$ is a continuous function in $t\in [0,1]$. 
Now, because of Lemma \ref{l:pointwise.to.funcl} $(i)$, what needs to be shown is 
$$
\frac{G_{k,n}^+(t)}{\eta_n} \to \frac{\xi_k^{(k+2,1,+)}(t; 0)}{(k+2)!}\, \ \ \ n\to\infty, \ \ \text{a.s.}, 
$$
for every $t\in [0,1]$. 
Let $(v_m)$ be a sequence at \eqref{e:def.vm}. Then, for every $n\in \N$, there exists a unique $m=m(n)\in \N$ such that $v_m \le n < v_{m+1}$. 
Using $S_m^\uparrow (t)$ and $S_m^\downarrow (t)$ in \eqref{e:def.Smup} and \eqref{e:def.Smdown}, as well as the associated Poisson processes $\mathcal P_{v_m}$, $\mathcal P_{v_{m+1}}$, we shall derive the subsequential upper and lower bounds for $G_{k,n}^+(t)/\eta_n$. For the upper bound, as an analogue of \eqref{e:initial.upper.bdd.Gknrhon} we obtain that 
\begin{align*} 
\frac{G_{k,n}^+(t)}{\eta_n}  &\le \big( v_{m}^{k+2}a(R_{g_m})R_{g_m}^{d-1}f(R_{g_m})^{k+2} \big)^{-1} \sum_{\substack{\Y\subset \X_{v_{m+1}}, \\ |\Y|=k+2}} h_t^\ktop(\Y)\, \one \big\{ \M(\Y) \ge R_{q_m} \big\} \\
&= \frac{S_m^\uparrow(t)}{v_{m}^{k+2}a(R_{g_m})R_{g_m}^{d-1}f(R_{g_m})^{k+2}} +  \big( v_{m}^{k+2}a(R_{g_m})R_{g_m}^{d-1}f(R_{g_m})^{k+2}\big)^{-1} \\
&\qquad \qquad \qquad \qquad \qquad \times \bigg\{  \sum_{\substack{\Y\subset \X_{v_{m+1}}, \\ |\Y|=k+2}} h_t^\ktop(\Y)\, \one \big\{ \M(\Y) \ge R_{q_m} \big\}  -S_m^\uparrow(t)\bigg\}, \ \ n\ge 1, 
\end{align*}  
where $q_m$ and $g_m$ are defined in \eqref{e:def.pmqm} and \eqref{e:def.emgm} respectively.  
Repeating the calculations similar to those in \eqref{e:diff.binom.poisson} and \eqref{e:upper.Gkn.rhon}, while appealing to Lemmas \ref{l:RV.seq.exp1}, \ref{l:first.second.moments.exp2}, and \ref{l:difference.binom.poisson.exp}, we get that 
$$
\limsup_{n\to\infty}\frac{G_{k,n}^+(t)}{\eta_n} \le  \frac{\xi_k^\ktop(t; 0)}{(k+2)!} + \limsup_{m\to\infty} \frac{S_m^\uparrow(t)-\E[S_m^\uparrow(t)]}{v_{m+1}^{k+2}a(R_{q_m})R_{q_m}^{d-1}f(R_{q_m})^{k+2}}, \ \ \text{a.s.} 
$$
Similarly, exploiting the same lemmas, we have that 
$$
\liminf_{n\to\infty}\frac{G_{k,n}^+(t)}{\eta_n} \ge  \frac{\xi_k^\ktop(t; 0)}{(k+2)!} + \liminf_{m\to\infty} \frac{S_m^\downarrow(t)-\E[S_m^\downarrow(t)]}{v_{m}^{k+2}a(R_{p_m})R_{p_m}^{d-1}f(R_{p_m})^{k+2}}, \ \ \text{a.s.} 
$$
Now, according to the Borel-Cantelli lemma, what need to be shown are the following: for every $\epsilon>0$, 
\begin{align}
&\sum_{m=1}^\infty \P \big( S_m^\uparrow(t) >  \E[S_m^\uparrow(t)] + \epsilon v_{m+1}^{k+2} a(R_{q_m})R_{q_m}^{d-1} f(R_{q_m})^{k+2} \big) < \infty,   \label{e:BC.exp1}\\
&\sum_{m=1}^\infty \P \big( S_m^\downarrow(t) <   \E[S_m^\downarrow(t)] - \epsilon v_{m}^{k+2} a(R_{p_m})R_{p_m}^{d-1} f(R_{p_m})^{k+2}   \big) < \infty.\label{e:BC.exp2}
\end{align}
Here, we claim that 
\begin{equation}  \label{e:normalizing.faster.than.log}
n^{k+2}a(R_n)R_n^{d-1}f(R_n)^{k+2} =\Omega \big( (\log n)^\zeta \big), \ \ \ n\to\infty, 
\end{equation}
for some $\zeta>0$. If this is proven, one can establish \eqref{e:BC.exp1} and \eqref{e:BC.exp2} by repeating the same arguments as those for the proofs of \eqref{e:BC.upper} and \eqref{e:BC.lower}, with the aid of  Lemma \ref{l:RV.seq.exp1}, Lemma   \ref{l:first.second.moments.exp2}, and Proposition \ref{p:concentration}. To show \eqref{e:normalizing.faster.than.log}, choose $\zeta'>0$ such that 
$$
\frac{d-\tau}{\tau} -b(k+2) -\zeta' >0. 
$$
Notice that 
\begin{align*}
\eta_n&= C^{k+2} (\log n)^{-b(k+2)} a\big(\psi^\leftarrow (\log n + b\log \log n)\big)\psi^\leftarrow (\log n +b\log \log n)^{d-1}. 
\end{align*}
Since $a\in \text{RV}_{1-\tau}$ and $\psi^\leftarrow \in \text{RV}_{1/\tau}$, we see that $a\big( \psi^\leftarrow(z) \big)\psi^\leftarrow(z)^{d-1}$ is a regularly varying function (at infinity) of exponent $(d-\tau)/\tau$. Therefore, for all $n\in \N$, 
\begin{align*}
&a\big(\psi^\leftarrow (\log n + b\log \log n)\big)\psi^\leftarrow (\log n +b\log \log n)^{d-1} \\
&\ge C^* (\log n + b\log \log n)^{\frac{d-\tau}{\tau}-\zeta'} \ge C^* (\log n)^{\frac{d-\tau}{\tau}-\zeta'}. 
\end{align*}
From this, we get that $\eta_n \ge C^*  (\log n)^{\frac{d-\tau}{\tau}-b(k+2)-\zeta'}$. Now, \eqref{e:normalizing.faster.than.log} has been obtained by setting $\zeta=(d-\tau)/\tau -b(k+2)-\zeta'$, and the proof of \eqref{e:BC.exp1} and \eqref{e:BC.exp2} has been completed. 

Finally, by virtue of Lemma \ref{l:upper.lower.betti}, the proof of \eqref{e:suff.cond.crackle2.exp} is very analogous to the corresponding discussions in Section \ref{sec:proof.fslln.hvt}, so we  omit it here. 
\end{proof}

\section{Appendix}
In the Appendix, we provide a series of lemmas and propositions that will be used for the proofs of Theorems \ref{t:fslln.hvt} and \ref{t:fslln.expdt}. 
As in the last section, $C^*$ denotes a generic and positive constant independent of $n$. 

\subsection{Technical results commonly used for the proof of Theorems \ref{t:fslln.hvt} and \ref{t:fslln.expdt}} \label{sec:ingredients}

The result below allows us to develop a functional SLLN from its pointwise version. 

\begin{lemma} \label{l:pointwise.to.funcl}
$(i)$ Let $\big( X_n(t), \, n \ge 1 \big)$ be a sequence of random elements of $D[0,1]$ with non-decreasing sample paths. Suppose $a:[0,1]\to \R$ is a continuous and non-decreasing function. Suppose that 
$$
X_n(t) \to a(t), \ \ n\to\infty, \ \ \text{a.s.},
$$
for every  $t\in [0,1]$, then 
$$
X_n(t) \to a(t), \ \ n\to\infty,  \ \ \text{a.s.~in } D[0,1], 
$$
where $D[0,1]$ is endowed with the uniform topology. 
\vspace{3pt}

\noindent $(ii)$ Let $\big(X_n(t,s), \, n\in\N\big)$ be a sequence of random elements, such that for each $n \ge 1$, $X_n(t,s)$ has right continuous sample paths with left limits in each of the coordinates. Assume further that for every $n \ge 1$, $X_n(t,s)$ is non-decreasing in $t$ and non-increasing in $s$. Suppose  $a(t,s)$ is a real-valued, continuous function on $[0,1]^2$, which has the same monotonicity as $X_n(t,s)$ in each of the coordinates. If we have that 
\begin{equation}\label{e:condprop}
X_n(t,s) \to a(t,s), \quad n \to \infty, \quad \text{a.s.}
\end{equation}
for every $t,s \in [0,1]$, then, as $n\to\infty$, 
$$
X_n(t,t)\to a(t,t) \quad \text{a.s.} \ \text{in }\ D[0,1],
$$
where $D[0,1]$ is equipped with the uniform topology.
\end{lemma}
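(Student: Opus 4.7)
The plan is to treat part $(i)$ as the classical Pólya-style uniformization of monotone pointwise convergence toward a continuous monotone limit, and to adapt the same finite-mesh argument to the diagonal in part $(ii)$.

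For part $(i)$, fix $\epsilon>0$. Since $a$ is continuous and non-decreasing on the compact interval $[0,1]$, it is uniformly continuous, so I can choose a partition $0=t_0<t_1<\cdots<t_N=1$ with $a(t_j)-a(t_{j-1})<\epsilon$ for every $j$. For any $t\in[t_{j-1},t_j]$ the assumed monotonicity of the sample paths of $X_n$ gives
$$
X_n(t_{j-1})-a(t_j)\;\le\;X_n(t)-a(t)\;\le\;X_n(t_j)-a(t_{j-1}),
$$
and combining this with the mesh bound on $a$ yields
$$
\basicsup |X_n(t)-a(t)|\;\le\;\max_{0\le j\le N}|X_n(t_j)-a(t_j)|+\epsilon.
$$
The right-hand side is a maximum over a \emph{finite} set of points, so by pointwise a.s.\ convergence it tends to $\epsilon$ almost surely. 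Taking $\epsilon=1/m$ along a countable sequence and intersecting the corresponding full-measure events yields a single event on which $\basicsup|X_n(t)-a(t)|\to 0$.

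For part $(ii)$, the same blueprint works with the two-variable monotonicity. Fix $\epsilon>0$ and, using the uniform continuity of $a$ on the compact square $[0,1]^2$, choose a partition $0=t_0<\cdots<t_N=1$ of mesh so fine that both $|a(t_j,t_{j-1})-a(t,t)|<\epsilon$ and $|a(t_{j-1},t_j)-a(t,t)|<\epsilon$ whenever $t\in[t_{j-1},t_j]$. Exploiting that $X_n$ is non-decreasing in the first coordinate and non-increasing in the second,
$$
X_n(t_{j-1},t_j)\;\le\;X_n(t,t)\;\le\;X_n(t_j,t_{j-1}), \qquad t\in[t_{j-1},t_j],
$$
and an analogous sandwich holds for $a(t,t)$. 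Subtracting gives
$$
|X_n(t,t)-a(t,t)|\;\le\;\max_{1\le j\le N}\bigl(|X_n(t_j,t_{j-1})-a(t_j,t_{j-1})|+|X_n(t_{j-1},t_j)-a(t_{j-1},t_j)|\bigr)+2\epsilon.
$$
Hypothesis \eqref{e:condprop} at the finitely many grid points $(t_j,t_{j-1})$ and $(t_{j-1},t_j)$ forces the first term to vanish a.s., after which letting $\epsilon\downarrow 0$ along $\epsilon=1/m$ closes the argument on a single full-measure event.

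Neither step is a genuine obstacle; the only point requiring care is the standard measure-theoretic bookkeeping of intersecting the countably many null sets (one per value of $\epsilon=1/m$, and one per grid point used) so that the resulting "for all $\epsilon$" statement holds on a single event of probability one. Continuity of $a$ on the compact domain is precisely what allows the mesh argument to reduce uniform convergence to pointwise convergence at finitely many points.
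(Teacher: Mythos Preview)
Your proof is correct and follows essentially the same approach as the paper: both arguments exploit uniform continuity of $a$ to set up a finite mesh, use the monotonicity in each coordinate to sandwich $X_n$ and $a$ by their corner values, and then invoke the pointwise a.s.\ hypothesis at finitely many grid points. The only cosmetic difference is that the paper first enlarges the diagonal sup to the full two-variable sup over $[0,1]^2$ and uses an $N\times N$ grid, whereas you work directly on the diagonal with the $2N$ corner points $(t_j,t_{j-1})$ and $(t_{j-1},t_j)$; this is the same mechanism carried out with slightly fewer grid points.
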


\begin{proof}
Part $(i)$ is proven in Proposition 4.2 of \cite{thomas:owada:2021a}. For Part $(ii)$, it is clear that $a(t,s)$ is uniformly continuous on $[0,1]^2$. Given $\epsilon >0$, we can choose $N=N(\epsilon)\in \N$ such that for all $(t_1, s_1)$, $(t_2, s_2) \in [0,1]^2$, 
\begin{equation}  \label{e:close.coordinate}
|t_1-t_2| + |s_1-s_2| \le \frac{2}{N} \  \ \text{ implies  } \ \big|  a(t_1,s_1) - a(t_2, s_2) \big| < \epsilon. 
\end{equation}
Then, we see that 
\begin{align*}
&\basicsup \big| X_n(t,t) - a(t,t) \big| \\
&\le \sup_{0 \le t, s \le 1} \big| X_n(t,s) -a(t,s) \big| \\
&=\max_{1 \le i \le N}\max_{1\le j \le N} \sup_{t \in [(i-1)/N, i/N]} \sup_{s \in [(j-1)/N, j/N]} \Big\{  \big( X_n(t,s)-a(t,s) \big) \vee \big( a(t,s)-X_n(t,s) \big) \Big\}\\
&\le \max_{1 \le i \le N}\max_{1\le j \le N} \Big\{ \Big( X_n\Big(\frac{i}{N}, \frac{j-1}{N}\Big) - a\Big(\frac{i-1}{N}, \frac{j}{N}\Big) \Big) \vee  \Big(  a\Big(\frac{i}{N}, \frac{j-1}{N}\Big) - X_n\Big(\frac{i-1}{N}, \frac{j}{N}\Big) \Big)  \Big\} \\
&\le \max_{1 \le i \le N}\max_{1\le j \le N} \Big\{ \Big( X_n\Big(\frac{i}{N}, \frac{j-1}{N}\Big) - a\Big(\frac{i}{N}, \frac{j-1}{N}\Big) \Big) \vee  \Big(  a\Big(\frac{i-1}{N}, \frac{j}{N}\Big) - X_n\Big(\frac{i-1}{N}, \frac{j}{N}\Big) \Big)  \Big\} + \epsilon. 
\end{align*}
In the above, the second inequality follows from the monotonicity of $X_n$ and $a$, and we have used \eqref{e:close.coordinate} for the third inequality. By virtue of \eqref{e:condprop}, the last expression converges to $\epsilon$ almost surely as $n \to \infty$. Since $\epsilon$ is arbitrary, the proof is complete.
\end{proof}

The next lemma provides the upper and lower bounds for $\beta_{k,n}(t)$ in \eqref{e:def.betsum}. We will make use of these bounds when estimating the difference between $\beta_{k,n}(t)$ and $G_{k,n}(t)$ (see \eqref{e:def.Gkn}) in the proof of Theorem \ref{t:fslln.hvt} $(i)$ and Theorem \ref{t:fslln.expdt} $(i)$. 
\begin{lemma} \label{l:upper.lower.betti}
For all $t \in [0,1]$, 
\begin{equation}  \label{e:upper.lower.betti.crackle}
J_{k,n}^{(k+2,1)}(t) \le \beta_{k,n}(t) \le J_{k,n}^{(k+2,1)} (t) + \binom{k+3}{k+1} L_{k,n}(t), 
\end{equation}
where 
$$
L_{k,n}(t) := \sum_{\substack{\Y\subset \X_n, \\ |\Y|=k+3}} \one \big\{  \cc (\Y, t) \text{ is connected}, \, \M(\Y)\ge R_n \big\}. 
$$
\end{lemma}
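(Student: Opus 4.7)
The plan is to prove the two bounds separately. The lower bound follows at once from \eqref{e:def.betsum}: since $\beta_{k,n}(t)=\sum_{i\ge k+2}\sum_{j\ge 1} j\, J_{k,n}^{(i,j)}(t)$ is a sum of nonnegative terms in which $J_{k,n}^{(k+2,1)}(t)$ appears with coefficient $1$, the inequality $J_{k,n}^{(k+2,1)}(t)\le\beta_{k,n}(t)$ is immediate.

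For the upper bound, I would first observe that any \v{C}ech complex on exactly $k+2$ vertices is a subcomplex of the $(k+1)$-simplex, so its $k$th Betti number lies in $\{0,1\}$. This forces $J_{k,n}^{(k+2,j)}(t)=0$ for every $j\ge 2$, and therefore
$$\beta_{k,n}(t)-J_{k,n}^{(k+2,1)}(t)=\sum_{i\ge k+3}\sum_{j\ge 1} j\, J_{k,n}^{(i,j)}(t)=\sum_{C}\beta_k(C),$$
where the last sum ranges over the connected components $C$ recorded by $J_{k,n}^{(i,j)}(t)$ with $|V(C)|\ge k+3$. Invoking the standard simplicial bound $\beta_k(C)\le \#\{k\text{-simplices in }C\}$, the desired inequality reduces to showing, for each such $C$,
$$\#\{k\text{-simplices in }C\}\le\binom{k+3}{k+1}\,\tau(C),$$
where $\tau(C)$ denotes the number of $(k+3)$-subsets $\Y'\subset V(C)$ for which $\cc(\Y',t)$ is connected. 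Summing this inequality over $C$ then yields the bound, because any $(k+3)$-subset $\Y'\subset \X_n$ with $\M(\Y')\ge R_n$ and $\cc(\Y',t)$ connected must be entirely contained in a single connected component, so that $\sum_C\tau(C)\le L_{k,n}(t)$.

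The main obstacle is the combinatorial claim $\#\{k\text{-simplices in }C\}\le\binom{k+3}{k+1}\,\tau(C)$, which I plan to establish by an injectivity-up-to-bounded-multiplicity argument. For each $k$-simplex $\sigma$ of $C$, use the connectedness of $C$ and the hypothesis $|V(C)|\ge k+3$ to extend $\sigma$ to a connected $(k+3)$-subset of $V(C)$: pick $v_1\in V(C)\setminus\sigma$ adjacent in the $1$-skeleton of $C$ to some vertex of $\sigma$ (possible since otherwise $\sigma$ would be separated from $V(C)\setminus\sigma$, contradicting connectedness), then analogously pick $v_2\in V(C)\setminus(\sigma\cup\{v_1\})$ adjacent to $\sigma\cup\{v_1\}$. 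This defines a map $\sigma\mapsto \Y_\sigma:=\sigma\cup\{v_1,v_2\}$ from the $k$-simplices of $C$ into the set of connected $(k+3)$-subsets of $V(C)$. Because a single $(k+3)$-set contains exactly $\binom{k+3}{k+1}$ subsets of cardinality $k+1$, each fiber of this map has size at most $\binom{k+3}{k+1}$, giving the bound and completing the proof.
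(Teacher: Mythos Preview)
Your proof is correct and follows essentially the same route as the paper: both arguments bound $\beta_k(C)$ by the number of $k$-simplices in $C$ and then cover the $k$-simplices by connected $(k+3)$-subsets, each of which can contain at most $\binom{k+3}{k+1}$ of them. The paper phrases the covering step somewhat loosely (``every time one finds such a connected subcomplex on $k+3$ points, it can increase the $k$-simplex counts by at most $\binom{k+3}{k+1}$''), whereas you make it precise by explicitly constructing the extension $\sigma\mapsto\Y_\sigma$ and bounding the fibers; this is a genuine improvement in clarity but not a different idea.
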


\begin{proof}
The inequality on the left hand side in \eqref{e:upper.lower.betti.crackle} is obvious due to  \eqref{e:def.betsum}. Owing to \eqref{e:def.betsum} again, the remaining inequality is equivalent to 
$$
\sum_{i=k+3}^n \sum_{j\ge 1} jJ_{k,n}^{(i,j)} (t) \le \binom{k+3}{k+1} L_{k,n}(t). 
$$
By the definition of $J_{k,n}^{(i,j)}(t)$, the left hand side above is equal to 
\begin{equation}  \label{e:equiv.repre.upper.betti}
\sum_{i=k+3}^n \sum_{\substack{\Y\subset \X_n, \\ |\Y|=i}} \beta_k \big( \cc(\Y,t) \big) \, \one \big\{ \M(\Y)\ge R_n \big\}\, \one \big\{  \cc(\Y, t) \text{ is a connected component of } \cc(\X_n, t) \big\}. 
\end{equation}
Note that $\beta_k \big( \cc(\Y,t) \big)$ is bounded by the number of $k$-simplices of $\cc(\Y,t)$. Suppose that for some $i\ge k+3$ and $\Y\subset \X_n$ with $|\Y|=i$, $\cc(\Y,t)$ is a connected component of $\cc(\X_n,t)$ with $\M(\Y)\ge R_n$.  Then, there  exists $\Z\subset \Y$ with $|\Z|=k+3$ such that $\cc(\Z,t)$ is a connected subcomplex of $\cc(\Y,t)$. Every time one finds such a connected subcomplex on $k+3$ points, it  can increase the $k$-simplex counts of $\cc(\Y,t)$ by at most $\binom{k+3}{k+1}$. In conclusion, 
$$
\beta_k\big( \cc(\Y,t) \big)\, \one \big\{ \M(\Y)\ge R_n \big\} \le \binom{k+3}{k+1} \sum_{\substack{\Z\subset \Y, \\ |\Z|=k+3}} \one \big\{ \cc(\Z, t) \text{ is connected}, \, \M(\Z)\ge R_n \big\}. 
$$
Substituting this bound back into \eqref{e:equiv.repre.upper.betti}, 
\begin{align*}
\sum_{i=k+3}^n \sum_{j\ge 1} jJ_{k,n}^{(i,j)} (t) &\le \binom{k+3}{k+1} \sum_{i=k+3}^n \sum_{\substack{\Y\subset \X_n, \\ |\Y|=i}}  \one \big\{  \cc(\Y, t) \text{ is a connected component of } \cc(\X_n, t) \big\}  \\
&\qquad \qquad \qquad \qquad \qquad \times \sum_{\substack{\Z\subset \Y, \\ |\Z|=k+3}} \one \big\{ \cc(\Z, t) \text{ is connected}, \, \M(\Z)\ge R_n \big\} \\
&= \binom{k+3}{k+1} L_{k,n}(t). 
\end{align*}
\end{proof}

The next result we introduce here is the concentration bound derived in \cite{bachmann:reitzner:2018} for a Poisson $U$-statistics. Let us rephrase the setup and assumptions of \cite{bachmann:reitzner:2018} in a way suitable for the current study. Let $\mathcal P$ denote a Poisson point process in $\R^d$ with finite intensity measure of no atoms. Let $s: (\R^d)^i \to \{ 0,1 \}$ be a \emph{symmetric} indicator function of order $i$ with the following properties. \\
$(i)$ There exists $c_1>0$ such that 
\begin{equation}  \label{e:indicator.condition1}  
s(x_1,\dots,x_i) =1 \text{ whenever diam}(x_1,\dots,x_i) < c_1.  
\end{equation}
$(ii)$ There is a constant $c_2>c_1$ such that 
\begin{equation}  \label{e:indicator.condition2}
s(x_1,\dots,x_i) = 0 \text{ whenever diam}(x_1,\dots,x_i) > c_2. 
\end{equation}
Finally, we define a \emph{Poisson $U$-statistics} $F(\mathcal P)$ of order $i$ by 
$$
F(\mathcal P)=\sum_{\Y\subset \mathcal P, \, |\Y|=i} s(\Y). 
$$
\begin{proposition} \label{p:concentration} [Theorem 3.1 in \cite{bachmann:reitzner:2018}]
Under the above conditions, there is a constant $C^*>0$, depending only on $i, d, c_1$, and $c_2$, such that for all $r>0$, 
\begin{align}
\P\big( F(\mathcal P) \ge \E[F(\mathcal P)] + r \big) &\le \exp \Big\{  -C^* \Big(  \big( \E[F(\mathcal P)]+r \big)^{1/(2i)} - \big( \E[F(\mathcal P)] \big)^{1/(2i)} \Big)^2 \Big\},  \notag\\
\P\big( F(\mathcal P) \le \E[F(\mathcal P)] - r \big) &\le \exp \bigg\{ -\frac{C^*r^2}{ \text{Var}(F(\mathcal P))} \bigg\}.  \notag
\end{align} 
\end{proposition}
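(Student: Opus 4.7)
The plan is to establish both concentration bounds by exploiting the Poisson $U$-statistic structure of $F$, together with the two geometric properties \eqref{e:indicator.condition1} and \eqref{e:indicator.condition2} of the kernel $s$. The starting point is the \emph{add-one cost} (difference) operator $D_x F(\mathcal{P}) := F(\mathcal{P} \cup \{x\}) - F(\mathcal{P})$. A direct computation shows
$$
D_x F(\mathcal{P}) = \sum_{\Y \subset \mathcal{P},\, |\Y|=i-1} s\big(\Y \cup \{x\}\big),
$$
which is itself a Poisson $U$-statistic of order $i-1$, whose kernel is supported only on $(i-1)$-tuples lying in $B(x,c_2)$ by the locality assumption \eqref{e:indicator.condition2}. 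Iterating, each $k$-fold difference $D_{x_1}\cdots D_{x_k}F$ is a Poisson $U$-statistic of order $i-k$ with kernel supported in a uniformly bounded neighborhood of $\{x_1,\dots,x_k\}$, and vanishes unless the $x_\ell$ are pairwise within distance $2c_2$. This locality, together with $|s|\le 1$, yields pointwise bounds on all iterated differences that depend only on $i,d,c_1,c_2$.

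For the upper tail, my plan is to apply the modified logarithmic Sobolev inequality on Poisson space (of Wu, L.~\emph{Ann.~Probab.}~2000), which bounds the moment generating function $\varphi(\lambda)=\mathbb{E}[e^{\lambda(F-\mathbb{E}F)}]$ via $\int \mathbb{E}\big[\phi(\lambda D_xF)\,e^{\lambda F}\big]\mu(dx)$ with $\phi(t)=e^t-t-1$. Expanding $\phi$ and iterating the inequality $k$ times produces a recursive bound on $\varphi(\lambda)$ involving integrals of the $k$-fold differences, which by the preceding paragraph are dominated by $C^*\mathbb{E}[F]$ for each $k\le i$ and vanish for $k>i$ (since an $i$-th difference of an order-$i$ $U$-statistic is deterministic). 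The resulting Bernstein-type inequality has $i$ polynomial correction terms, and optimizing $\lambda$ via a Chernoff argument over $r>0$ produces exactly the stretched exponential rate $\bigl[(\mathbb{E}F+r)^{1/(2i)} - (\mathbb{E}F)^{1/(2i)}\bigr]^2$; the exponent $1/(2i)$ reflects the highest-order chaos contribution in the Wiener--Itô decomposition of $F$.

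For the lower tail, the situation is softer because $D_xF\ge 0$ (adding a point cannot destroy simplices already counted), so $F-\mathbb{E}F$ cannot have a heavy negative tail. The plan is to use the Poisson Poincaré inequality $\mathrm{Var}(F)\le \int \mathbb{E}[(D_xF)^2]\mu(dx)$ together with a direct control of the negative exponential moments: when $\lambda<0$, $\phi(\lambda D_xF)\le (\lambda D_xF)^2/2$ since $D_xF\ge 0$. Plugging into the Wu inequality gives the sub-Gaussian bound $\log\mathbb{E}[e^{\lambda(F-\mathbb{E}F)}]\le C^*\lambda^2\mathrm{Var}(F)$ for $\lambda\le 0$, and a Chernoff optimization yields the Gaussian rate $r^2/\mathrm{Var}(F)$.

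The main obstacle is the upper tail, specifically, keeping the dependence on the order $i$ explicit and additive across the $i$ levels of iteration, so that the single-term dominant contribution at level $i$ produces the $(\cdot)^{1/(2i)}$ exponent rather than a weaker exponent from na\"{i}ve telescoping. This demands a careful bookkeeping of the combinatorial constants in the iterated log-Sobolev expansion and a sharp choice of $\lambda$ that balances the $k=1,\dots,i$ terms; the locality hypothesis \eqref{e:indicator.condition2} enters precisely to ensure that each iterated kernel has bounded \emph{intrinsic} support, so that the resulting integrals scale like $\mathbb{E}[F]$ and not like higher powers of the intensity measure.
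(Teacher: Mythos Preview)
The paper does not prove this proposition: it is simply quoted verbatim as Theorem~3.1 of Bachmann and Reitzner \cite{bachmann:reitzner:2018}, with no argument supplied. There is therefore nothing in the paper to compare your proposal against; what you have written is a sketch of how the cited reference establishes the result.

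As such a sketch, your outline is broadly on target and close in spirit to \cite{bachmann:reitzner:2018}: iterated add-one costs $D_{x_1}\cdots D_{x_k}F$, the locality \eqref{e:indicator.condition2} to bound each level by a constant multiple of $\E[F]$, Wu's modified log-Sobolev inequality plus a Herbst argument for the upper tail, and non-negativity $D_xF\ge 0$ to get a sub-Gaussian lower tail. One soft spot worth flagging is your lower-tail step: plugging $\phi(\lambda D_xF)\le \tfrac12\lambda^2(D_xF)^2$ into the entropy inequality produces $\E\big[e^{\lambda F}\int (D_xF)^2\,\mu(dx)\big]$, not $\text{Var}(F)\cdot\E[e^{\lambda F}]$, so an additional decoupling step (or an alternative route such as the self-bounding framework, or a uniform pathwise bound on $\int (D_xF)^2\,\mu(dx)$ in terms of $F$ itself) is still required to land on the stated rate $r^2/\text{Var}(F)$. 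This is handled carefully in \cite{bachmann:reitzner:2018}; your sketch elides it.
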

\bigskip

\subsection{Technical lemmas for the proof of Theorem \ref{t:fslln.hvt}}  \label{sec:technical.lemma.heavy}

In this section, we offer a series of technical lemmas for the proof of Theorem \ref{t:fslln.hvt}. The first result below deals with asymptotic ratios of the regularly varying sequences as a function of $v_m$, $p_m$, $q_m$, $r_m$, and $s_m$ in \eqref{e:def.vm}, \eqref{e:def.pmqm}, and \eqref{e:def.rmsm}. 

\begin{lemma}  \label{l:RV.seq.heavy}
Let $u_m$ and $w_m$ be any of the sequences in \eqref{e:def.vm}, \eqref{e:def.pmqm}, and \eqref{e:def.rmsm}. Under the setup of Theorem \ref{t:fslln.hvt}, as $m\to\infty$, 
\begin{equation} \label{e:RV.seq.heavy}
\frac{u_m}{w_m} \to 1,  \ \ \  \frac{R_{u_m}}{R_{w_m}} \to 1, \ \ \ 
\frac{f(R_{u_m})}{f(R_{w_m})} \to 1.  
\end{equation}
\end{lemma}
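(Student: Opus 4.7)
The plan is to reduce all three statements in \eqref{e:RV.seq.heavy} to one basic fact, namely that $v_{m+1}/v_m\to 1$, and then invoke the uniform convergence theorem for regularly varying functions. First I would show that $v_{m+1}/v_m\to 1$. From the definition $v_m=\lfloor e^{m^\gamma}\rfloor$ with $\gamma\in(0,1)$, the mean value theorem gives $(m+1)^\gamma-m^\gamma=\gamma m^{\gamma-1}(1+o(1))\to 0$, so
$$
\frac{v_{m+1}}{v_m}\sim \exp\bigl\{(m+1)^\gamma-m^\gamma\bigr\}\to 1.
$$
Every sequence appearing in \eqref{e:def.vm}, \eqref{e:def.pmqm} and \eqref{e:def.rmsm} lies in the interval $[v_m,v_{m+1}]$, so any two such sequences $u_m,w_m$ satisfy $v_m/v_{m+1}\le u_m/w_m\le v_{m+1}/v_m$, and the first assertion $u_m/w_m\to 1$ follows. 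As a by-product, both $u_m$ and $w_m$ diverge to infinity.

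Next I would argue that under the hypotheses of Theorem \ref{t:fslln.hvt}, the sequence $R_n$ is regularly varying of positive index. In Part $(i)$ this is directly assumed. In Part $(ii)$, the condition $nf(R_n)\to\lambda\in(0,\infty)$ together with $f\in RV_{-\alpha}$ and the asymptotic-inverse theory for regularly varying functions (Proposition 2.6 in \cite{resnick:2007}) yields $R_n\in RV_{1/\alpha}$. Write $R_n\in RV_\beta$ with $\beta>0$. By the uniform convergence theorem, $R_{\lfloor cx\rfloor}/R_{\lfloor x\rfloor}\to c^\beta$ uniformly on compact subsets of $(0,\infty)$, so taking $c=c_m:=u_m/w_m\to 1$ gives $R_{u_m}/R_{w_m}\to 1^\beta=1$.

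Finally, since $f\in RV_{-\alpha}$ and both $R_{u_m}$ and $R_{w_m}$ diverge to infinity with $R_{u_m}/R_{w_m}\to 1$, one more application of the uniform convergence theorem (now to $f$) yields
$$
\frac{f(R_{u_m})}{f(R_{w_m})}\to 1^{-\alpha}=1,
$$
completing the proof. There is no substantive technical obstacle here; the only point requiring a little care is the verification that $R_n$ is regularly varying of positive index in Part $(ii)$, which is a routine application of the inversion theorem for regularly varying functions. The rest is purely a uniform-convergence argument applied twice.
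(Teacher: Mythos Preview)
Your proposal is correct and follows essentially the same route as the paper: sandwich $u_m/w_m$ between $v_m/v_{m+1}$ and $v_{m+1}/v_m$, show the latter tends to $1$ via $(m+1)^\gamma-m^\gamma\to 0$, then apply the uniform convergence theorem for regularly varying sequences to handle $R_{u_m}/R_{w_m}$ and $f(R_{u_m})/f(R_{w_m})$. The only cosmetic difference is that for the third limit the paper treats $(f(R_n))_{n\ge 1}$ directly as a regularly varying sequence and reruns the argument for the second limit, whereas you apply uniform convergence to $f$ itself using $R_{u_m}/R_{w_m}\to 1$; both are equivalent.
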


\begin{proof}
By the definition of these five sequences, it is evident that 
$$
\frac{v_m}{v_{m+1}} \le \frac{u_m}{w_m} \le \frac{v_{m+1}}{v_m}
$$
for all $m\in \N$. Note that 
\begin{equation}  \label{e:ratio.vms}
1\le \frac{v_{m+1}}{v_m} \le \frac{e^{(m+1)^\gamma - m^\gamma}}{1-e^{-m^\gamma}} = \frac{e^{m^{\gamma-1}(\gamma+o(1))}}{1-e^{-m^\gamma}}, \ \ \ m\to\infty. 
\end{equation}
As $0<\gamma<1$, the rightmost term goes to $1$ as $m\to\infty$. 

For the proof of the second statement in \eqref{e:RV.seq.heavy}, let $\zeta>0$ be a regular variation exponent of $(R_n)$ (In the case of Theorem \ref{t:fslln.hvt} $(ii)$, we can take $\zeta=1/\alpha$). One can then rewrite the ratio as 
$$
\frac{R_{u_m}}{R_{w_m}} = \frac{R_{\lfloor w_m \frac{u_m}{w_m} \rfloor}}{R_{w_m}} - \Big( \frac{u_m}{w_m} \Big)^\zeta + \Big( \frac{u_m}{w_m} \Big)^\zeta. 
$$
Since $u_m/w_m \to 1$ as $m\to\infty$, we have that $u_m/w_m \in [1/2, 3/2]$ for sufficiently large $m$. By the uniform convergence of  regularly varying sequences (see, e.g., Proposition 2.4 in \cite{resnick:2007}), 
$$
\bigg|\,  \frac{R_{\lfloor w_m \frac{u_m}{w_m} \rfloor}}{R_{w_m}} - \Big( \frac{u_m}{w_m} \Big)^\zeta  \bigg| \le \sup_{\frac{1}{2} \le a\le \frac{3}{2}} \bigg|\,  \frac{R_{\lfloor a w_m \rfloor}}{R_{w_m}} - a^\zeta  \, \bigg| \to 0, \ \ \ m\to\infty. 
$$
Now, the proof is complete. Finally, since $\big( f(R_n), \, n\ge 1 \big)$ is also a regularly varying sequence,  the third statement in \eqref{e:RV.seq.heavy} can be shown in the same way as above. 
\end{proof}

The lemma below gives the asymptotic first and second moments for \eqref{e:def.upbd}, \eqref{e:def.lobd}, and \eqref{e:Wmmt}. 

\begin{lemma}\label{l:first.second.moments.heavy}
$(i)$ Under the assumptions of Theorem \ref{t:fslln.hvt} $(ii)$, for every $t, s\in[0,1]$, $i\geq k+2$, and $j\geq 1$, we have as $m\to\infty$, 
\begin{equation}\label{e:etmup}
R_{q_m}^{-d}\, \E \big[T_m^\ijupa(t,s)\big]\to\frac{\lambda^i}{i !}\, \mu_k^\ijp(t,s; \lambda),
\end{equation}
\begin{equation}\label{e:etmdown}
R_{p_m}^{-d}\, \E \big[T_m^\ijdoa(t,s)\big]\to\frac{\lambda^i}{i !}\, \mu_k^\ijp(t,s; \lambda). 
\end{equation}
Moreover,
\begin{equation}\label{e:vartmup}
\sup_{m\geq 1} R_{q_m}^{-d}\, \emph{Var}\big( T_m^\ijupa(t,s)\big)<\infty,
\end{equation}
\begin{equation}\label{e:vartmdown}
\sup_{m\geq 1} R_{p_m}^{-d}\, \emph{Var}\big(T_m^\ijdoa(t,s)\big)<\infty. 
\end{equation}
$(ii)$ Under the conditions of Theorem \ref{t:fslln.hvt} $(ii)$, for every $t\in [0,1]$ and $M\in \N$, we have as $m\to\infty$, 
\begin{equation}  \label{e:WmM.first}
R_{q_m}^{-d}\, \E \big[ V_{m,M}(t) \big] \to \sum_{i=M+1}^\infty i^{k+1} \frac{\lambda^i}{i!}\, \zeta_i (t) <\infty, 
\end{equation}
where 
\begin{equation}  \label{e:def.zetai}
\zeta_i(t) = \frac{s_{d-1}}{\alpha i-d}\, \int_{(\R^d)^{i-1}} \one \Big\{ \cc \big( \{ 0,\by \}, t \big) \text{ is connected} \Big\}d\by. 
\end{equation}
Furthermore, 
\begin{equation}  \label{e:WmM.second}
\sup_{m\geq 1} R_{q_m}^{-d}\, \emph{Var} \big( V_{m,M}(t) \big) < \infty. 
\end{equation}
\end{lemma}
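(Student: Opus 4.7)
The plan is to handle all four moment estimates by a common recipe: write each expectation as a sum of multiple integrals indexed by the $i$-tuple $\Y$ (and its overlap structure with $\X_{v_m}$), then reduce each integral by introducing polar coordinates for the ``anchor'' point, $y_1 = R_{q_m}\rho\theta$ with $\rho\ge 1$ and $\theta\in S^{d-1}$, together with relative coordinates $z_\ell := y_\ell - y_1$ for $\ell\ge 2$. Shift-invariance \eqref{e:shift.invariance} then replaces $h_t^\ijp(\by)$ by $h_t^\ijp(0,\bz)$, local determination \eqref{e:locally.determined} confines $\bz$ to a compact set, the Jacobian contributes a factor $R_{q_m}^d$, and regular variation of $f$ combined with $nf(R_n)\to\lambda$ (together with Lemma \ref{l:RV.seq.heavy}) produces the exponential factor appearing in $\mu_k^\ijp$.

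For \eqref{e:etmup}, I would split $T_m^\ijupa(t,s)$ by $a := |\Y\cap\X_{v_m}|\in\{0,\dots,i\}$. Since $\X_{v_m}$ and $\X_{v_{m+1}}\setminus\X_{v_m}$ are independent by construction, exchangeability gives
\[
\E[T_m^\ijupa(t,s)]=\sum_{a=0}^{i}\binom{v_m}{a}\binom{v_{m+1}-v_m}{i-a}\int h_t^\ijp(\by)\,\one\{\M(\by)\ge R_{q_m}\}(1-p_\by)^{v_m-a}\prod_{\ell=1}^i f(y_\ell)\,d\by,
\]
with $p_\by:=\int_{\B(\by;s)}f$. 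Under the polar/relative change of variables, $v_m p_\by$ converges to the appropriate $\lambda\rho^{-\alpha}$-expression and $f(R_{q_m}\rho\theta)^i/f(R_{q_m})^i\to \rho^{-\alpha i}$, so $(1-p_\by)^{v_m-a}$ converges to the required exponential. Since $v_{m+1}-v_m = o(v_m)$ by Lemma \ref{l:RV.seq.heavy}, only $a=i$ survives in the limit, producing a factor $v_m^i/i!$ that combines with $f(R_{q_m})^i\sim(\lambda/v_m)^i$ to yield \eqref{e:etmup}; dominated convergence is justified by the compact support of $h_t^\ijp$ in $\bz$. Equation \eqref{e:etmdown} follows by the symmetric computation with the roles of $v_m$ and $v_{m+1}$ interchanged.

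For the variance bounds \eqref{e:vartmup}--\eqref{e:vartmdown}, I would decompose $\E[(T_m^\ijupa)^2]$ as a double sum over pairs $(\Y,\Y')$ of $i$-tuples and split by the overlap $b:=|\Y\cap\Y'|\in\{0,1,\dots,i\}$. The $b=0$ part matches $(\E[T_m^\ijupa])^2$ up to an error of order $R_{q_m}^d$, since the common ``outside'' set $\X_{v_m}\setminus(\Y\cup\Y')$ introduces only an exponentially small coupling. For each $b\ge 1$, anchoring at one of the $2i-b$ distinct points and applying the same polar change of variables yields an integral of order $R_{q_m}^d f(R_{q_m})^{2i-b}$, while the combinatorial count of such pairs is of order $v_m^{2i-b}$. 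Combined with $v_m^{2i-b}f(R_{q_m})^{2i-b}\to \lambda^{2i-b}$, each overlap class $b\ge 1$ contributes $O(R_{q_m}^d)$, giving the uniform bound \eqref{e:vartmup}.

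For part $(ii)$, the same recipe applied to $V_{m,M}(t) = \sum_{i\ge M+1} i^{k+1}\sum_{\Y} \one\{\cc(\Y,t)\text{ connected},\,\M(\Y)\ge R_{q_m}\}$ yields
\[
R_{q_m}^{-d}\,\E[V_{m,M}(t)] \;\longrightarrow\; \sum_{i=M+1}^\infty i^{k+1}\,\frac{\lambda^i}{i!}\,\zeta_i(t),
\]
and the main obstacle is to show that this sum is finite uniformly in $t\in[0,1]$. A Cayley-type enumeration of spanning trees of the $t$-connectivity graph, combined with the diameter bound $\mathrm{diam}(\Y)\le (i-1)t$ inherited from connectedness, yields $\zeta_i(t)\le C^\ast (\omega_d t^d)^{i-1}i^{i-2}/(\alpha i - d)$; Stirling's formula then gives $i^{k+1}\lambda^i\zeta_i(t)/i!\le C^\ast i^{k-5/2}(\lambda e\omega_d t^d)^i$, summable uniformly on $t\in[0,1]$ precisely because $\lambda < (e\omega_d)^{-1}$. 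The variance bound \eqref{e:WmM.second} is obtained by the same overlap decomposition as in part $(i)$, now carried out with both indices $i,i'$ ranging over $[M+1,\infty)$; the summability argument above must then be invoked in both factors, which is again where the constraint $\lambda<(e\omega_d)^{-1}$ is essential.
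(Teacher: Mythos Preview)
Your overall strategy matches the paper's: anchor at one point, pass to polar coordinates $x\leftrightarrow(R_{q_m}\rho,\theta)$ and relative coordinates $\bz$, use regular variation to identify the limit, and decompose the second moment by overlap $|\Y\cap\Y'|$. Two points in your sketch are genuine gaps.

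First, your justification of dominated convergence for \eqref{e:etmup} is incomplete. Compact support of $h_t^\ijp(0,\bz)$ in $\bz$ controls the $(\R^d)^{i-1}$-integral, but after the substitution $r=R_{q_m}\rho$ there remains an integral over $\rho\in[1,\infty)$ with integrand $\rho^{d-1}f(R_{q_m}\rho)/f(R_{q_m})$ times bounded factors. The pointwise limit $\rho^{d-1-\alpha}$ is integrable because $\alpha>d$, but you need an $m$-uniform majorant to invoke dominated convergence. The paper obtains this via Potter's bounds: for any $\xi\in(0,\alpha-d)$ one has $f(R_{q_m}\rho)/f(R_{q_m})\le(1+\xi)\rho^{-\alpha+\xi}$ for all large $m$ and all $\rho\ge1$. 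The same device is needed for the pre-limit bound in part~$(ii)$, where you currently bound only the limiting $\zeta_i(t)$ rather than the pre-limit terms; without a uniform-in-$m$ bound you cannot pass the limit inside the sum over $i$.

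Second, your treatment of the $b=0$ term in the variance is not correct as stated. The phrase ``exponentially small coupling'' is misleading: when $\B(\Y;s)$ and $\B(\Y';s)$ intersect, the dependence between the two isolation indicators is of order one, not small. The paper handles this by a further split of $\E[I_0]-(\E T_m^\ijupa)^2$ according to whether $\B(\Y;s)\cap\B(\Y';s)=\emptyset$. On the event of disjoint balls, $\mathrm{vol}\big(\B(\Y\cup\Y';s)\big)=\mathrm{vol}\big(\B(\Y;s)\big)+\mathrm{vol}\big(\B(\Y';s)\big)$ and the difference of the two exponential-type factors is genuinely $o(1)$ after the change of variables, yielding an $o(R_{q_m}^d)$ contribution. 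On the event of intersecting balls, $\cc(\Y\cup\Y',2s)$ is connected, so the same connectivity-probability estimate you use for $b\ge1$ (Lemma~\ref{l:connectivity.prof.heavy}) bounds this piece by $C^*R_{q_m}^d$. Without this split your argument does not close.
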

\begin{proof}
For the proof of \eqref{e:etmup}, by the conditioning on $\Y$ we have 
\begin{align}
\E \big[ T_m^\ijupa(t,s)\big] &= \binom{v_{m+1}}{i} \E \Big[ h_t^\ijp (\Y)\, \one \big\{ \M(\Y)\ge R_{q_m} \big\} \P\big( \B (\Y; s/2)\cap \B(\X_{v_m}\setminus \Y; s/2) =\emptyset \, \big| \, \Y \big)\Big] \label{e:condtioning.expectation}\\
&={v_{m+1} \choose i}\int_{(\R^d)^i}h^\ijp_t(x_1,\dots,x_i)\, \ind{\M(x_1,\dots,x_i)\geq R_{q_m}}  \notag \\
&\qquad \qquad \qquad \qquad \qquad \times \big(1-I_s(x_1,\dots,x_i)\big)^{v_{m}-i}\prod_{\ell=1}^if(x_\ell)d\bx,\notag
\end{align}
where 
$$
I_s(x_1,\dots,x_i) :=\int_{\B\big( \{x_1,\dots,x_i\};s\big)}f(z)dz.
$$
Here, we consider the case in which a point set $\Y$ is contained in $\X_{v_m}$. We treat only this case, but the other cases (i.e., $\Y\cap (\X_{v_{m+1}}\setminus \X_{v_m})\neq \emptyset$) can be handled in the same way. 
Changing the variables $x_1= x$, $x_\ell= x+y_{\ell-1}$, $\ell\in \{2,\dots,i\}$ and using shift invariance condition \eqref{e:shift.invariance}, we have that 
\begin{align}
&\E  \big[T_m^\ijupa(t,s)\big]={v_{m+1} \choose i}\int_{\R^d}\int_{(\R^d)^{i-1}}h^\ijp_t(0,\by)\big(1-I_s(x,x+y_1,\cdots,x+y_{i-1})\big)^{v_{m}-i} \label{e:initial.change.variables}\\
&\qquad \qquad \qquad \qquad \qquad \times f(x)\, \ind{\| x\|\geq R_{q_m}} \prod_{\ell=1}^{i-1}f(x+y_\ell)\ind{\|x+y_\ell\|\geq R_{q_m}}d\by dx, \notag 
\end{align}
where $\by =(y_1,\dots,y_{i-1}) \in (\R^d)^{i-1}$. 
The polar coordinate transform $x\leftrightarrow (r,\theta)$ with $r\ge 0$, $\theta \in S^{d-1}$, gives that 
\begin{align}
\E  \big[T_m^\ijupa(t,s)\big] &= \binom{v_{m+1}}{i} \int_{S^{d-1}}J(\theta)\int_{R_{q_m}}^{\infty} r^{d-1}\int_{(\R^d)^{i-1}} h^\ijp_t(0,\by) \label{e:polar1}\\
&\qquad \qquad \times f(r) \prod_{\ell=1}^{i-1} f\big( \|r\theta +y_\ell\| \big)\, \one \big\{ \| r\theta + y_\ell \| \ge R_{q_m} \big\}  \notag \\
&\qquad   \qquad\qquad \times \big(1-I_s(r\theta, r\theta+y_1, \dots, r\theta + y_{i-1}) \big)^{v_{m}-i}d\by\, dr\, d\theta. \notag 
\end{align}
Furthermore, an additional change of variable $r=R_{q_m}\rho$ yields that 
\begin{align}
\E \big[ T_m^\ijupa(t,s)\big]=&{v_{m+1} \choose i}R_{q_m}^df(R_{q_m})^i\int_{S^{d-1}}J(\theta)\int_1^{\infty}\rho^{d-1}\int_{(\R^d)^{i-1}}h^\ijp_t(0,\by)\label{e:etmuptrans}\\
&\qquad  \times \frac{f(R_{q_m}\rho)}{f(R_{q_m})}  \prod_{\ell=1}^{i-1}\frac{f\big(R_{q_m}\|\rho\theta+y_\ell /R_{q_m} \|\big)}{f(R_{q_m})}\, \ind{\|\rho\theta+y_\ell/R_{q_m}\|\geq1} \notag \\
&\qquad \times \big(1-I_s(R_{q_m}\rho\theta, R_{q_m}\rho\theta+y_1, \dots, R_{q_m}\rho\theta + y_{i-1}) \big)^{v_{m}-i}d\by\, d\rho\, d\theta. \notag
\end{align}
By Lemma \ref{l:RV.seq.heavy}, we have 
$$
\binom{v_{m+1}}{i} f(R_{q_m})^i \sim \frac{\big( v_{m+1}f(R_{q_m}) \big)^i}{i!} \sim \frac{\big( q_{m}f(R_{q_m}) \big)^i}{i!}  \to \frac{\lambda^i}{i!}, \ \ \ m\to\infty. 
$$
By the regular variation assumption in \eqref{e:hvytl}, 
$$
\frac{f(R_{q_m}\rho)}{f(R_{q_m})}\prod_{\ell=1}^{i-1}\frac{f\big(R_{q_m}\|\rho\theta+y_\ell /R_{q_m} \|\big)}{f(R_{q_m})}\, \ind{\|\rho\theta+y_\ell/R_{q_m}\|\geq1}\to\rho^{-\alpha i} \qquad \text{as } n\to\infty
$$
for all $\rho\geq  1$, $\theta\in S^{d-1}$, and $y_1,\dots, y_{i-1}\in\R^d$.
As for the remaining term of the integrand in \eqref{e:etmuptrans}, we get that 
\begin{align}
&\lim_{m\to\infty} \big(1-I_s(R_{q_m}\rho\theta, R_{q_m}\rho\theta+y_1, \dots, R_{q_m}\rho\theta + y_{i-1}) \big)^{v_{m}-i} \label{e:compoent.part}\\
& =\lim_{m\to\infty} \Big(  1-\int_{\B\big( \{ 0,y_1,\dots,y_{i-1} \}; s \big)} f\big( R_{q_m} \| \rho\theta +z/R_{q_m} \| \big) dz\Big)^{v_m-i} \notag \\
& =\lim_{m\to\infty} \exp \Big\{  -v_m f(R_{q_m}) \int_{\B\big( \{ 0,y_1,\dots,y_{i-1} \}; s \big)} \frac{f\big( R_{q_m} \| \rho\theta +z/R_{q_m} \| \big)}{f(R_{q_m})} dz \Big\}. \notag
\end{align}
Now, \eqref{e:hvytl} and Lemma \ref{l:RV.seq.heavy} ensure that the last term in \eqref{e:compoent.part} converges to 
$$
\exp \Big\{ -\lambda \rho^{-\alpha} s^d \text{vol} \Big( \B\big( \{ 0,y_1,\dots,y_{i-1} \}; 1 \big)  \Big)  \Big\}. 
$$
Appealing to all of these convergence results and assuming temporarily that the dominated convergence theorem is applicable, we can obtain \eqref{e:etmup}. 

It now remains to find an integrable upper bound for the terms under the integral sign in \eqref{e:etmuptrans}. First it is evident that 
$$
\big(1-I_s(R_{q_m}\rho\theta, R_{q_m}\rho\theta+y_1, \dots, R_{q_m}\rho\theta + y_{i-1}) \big)^{v_{m}-i} \le 1. 
$$
Using Potter's bounds (see Proposition 2.6 in \cite{resnick:2007}), for every $\xi\in (0, \alpha-d)$, we have, for sufficiently large $m$, 
\begin{equation}\label{bd1}
\frac{f(R_{q_m}\rho)}{f(R_{q_m})}\leq(1+\xi)\rho^{-\alpha+\xi}
\end{equation}
and
\begin{equation}\label{bd2}
\prod_{\ell=1}^{i-1}\frac{f\big(R_{q_m}\|\rho\theta+y_\ell /R_{q_m} \|\big)}{f(R_{q_m})}\, \ind{\|\rho\theta+y_\ell/R_{q_m}\|\geq1}\leq(1+\xi)^{i-1}
\end{equation}
for all $\rho\geq 1$, $\theta\in S^{d-1}$ and $\by=(y_1,\dots,y_{i-1})\in (\R^d)^{i-1}$ such that $h_t^\ijp (0,\by)=1$. Combining all the  bounds derived above, together with $\int_1^{\infty}\rho^{d-1-\alpha+\xi}d\rho<\infty$, we can obtain an integrable upper bound, as desired. The proof of \eqref{e:etmdown} is similar, so we skip it here.

For the proof of \eqref{e:vartmup},  
\begin{align*}
\E \big[ T_m^\ijupa(t,s)^2\big] &=\sum_{\ell=0}^i\E\bigg[ \sum_{\substack {\Y \subset\X_{v_{m+1}} \\ \lvert \Y \rvert=i}} \sum_{\substack {\Y' \subset\X_{v_{m+1}} \\ \lvert \Y' \rvert=i, \, |\Y\cap \Y'|=\ell}}  h^\ijp_t(\Y)h^\ijp_t(\Y')\, \ind{\M(\Y\cup \Y')\geq R_{q_m}} \\
&\times \one \big\{\B(\Y;s/2)\cap\B(\X_{v_m}\setminus\Y;s/2)=\emptyset\big\} \, \one \big\{\B(\Y';s/2)\cap\B(\X_{v_m}\setminus\Y';s/2)=\emptyset\big\} \bigg] \\
&=:\sum_{\ell=0}^i\E[I_\ell]. 
\end{align*}
From this, $\text{Var}\big( T_m^\ijupa(t,s)\big)$ can be partitioned as $\text{Var}\big( T_m^\ijupa(t,s)\big)  = A_m +B_m$, where 
\begin{equation}  \label{e:def.AmBm.decomp}
A_m = \sum_{\ell=1}^i\E[I_\ell],  \ \ \ B_m = \E[I_0]-\Big\{ \E\big[T_m^\ijupa(t,s)\big] \Big\}^2. 
\end{equation}
For $\ell\in \{1,2,\dots,i\}$, 
\begin{align}
\E[I_\ell]\leq&\E \bigg[ \sum_{\substack {\Y \subset\X_{v_{m+1}} \\ \lvert \Y \rvert=i}} \sum_{\substack {\Y' \subset\X_{v_{m+1}} \\ \lvert \Y' \rvert=i, \, |\Y\cap \Y'|=\ell}}  h^\ijp_t(\Y)h^\ijp_t(\Y')\, \ind{\M(\Y\cup \Y')\geq R_{q_m}} \bigg] \label{e:EIell}\\
&={v_{m+1} \choose i}{i \choose \ell}{v_{m+1}-i \choose i-\ell} \notag \\
&\qquad \qquad \qquad \times \E\Big[ h^\ijp_t(\Y)h^\ijp_t(\Y')\, \one \big\{ \M(\Y\cup\Y')\ge R_{q_m}\big\}\Big] \, \one \big\{\lvert \Y\cap\Y' \rvert=\ell\big\} \notag\\
&\le  \cs v_{m+1}^{2i-\ell}\P\big( \cc(\Y\cup\Y', t) \text{ is connected}, \ \M(\Y\cup\Y')\ge R_{q_m}\big) \, \one \big\{\lvert \Y\cap\Y' \rvert=\ell\big\} \notag \\
&\le  \cs R_{q_m}^d, \notag 
\end{align}
where the last inequality comes from Lemma \ref{l:connectivity.prof.heavy} $(ii)$ below. This implies that $\sup_{m \ge 1}R_{q_m}^{-d}\, A_m<\infty$. 
In order to treat $B_m$ in \eqref{e:def.AmBm.decomp}, we derive an upper bound for $\E[I_0]$ by 
\begin{align}
\begin{split}  \label{e:EI0}
\E[I_0]\leq&\E \bigg[ \sum_{\substack {\Y \subset\X_{v_{m+1}} \\ \lvert \Y \rvert=i}} \sum_{\substack {\Y' \subset\X_{v_{m+1}} \\ \lvert \Y' \rvert=i  }}  h^\ijp_t(\Y)h^\ijp_t(\Y')\, \ind{\M(\Y\cup \Y')\geq R_{q_m}} \\
&\qquad \qquad \qquad \qquad \times\one \Big\{\B(\Y\cup\Y';s/2)\cap\B\big(\X_{v_m}\setminus(\Y\cup\Y'); s/2\big)=\emptyset\Big\}\bigg] \, \one \big\{ |\Y\cap \Y'|=0 \big\}  \\
&\le {v_{m+1} \choose i}^2\E \Big[h^\ijp_t(\Y)h^\ijp_t(\Y')\, \ind{\M(\Y\cup \Y')\geq R_{q_m}}  \\
&\qquad \qquad \qquad \qquad \qquad \times \Big(1-\int_{\B(\Y\cup\Y';s)}f(z)dz\Big)^{v_m-2i} \Big]\, \one \big\{ |\Y\cap \Y'|=0 \big\}, 
\end{split}
\end{align}
where the second inequality is obtained from an obvious relation $\binom{v_{m+1}-i}{i} \le \binom{v_{m+1}}{i}$ as well as 
the conditioning on $\Y\cup\Y'$ as in \eqref{e:condtioning.expectation}. Although we here consider the case when all the points in $\Y \cup \Y'$ belong to $\X_{v_m}$, the other cases (i.e., $(\Y \cup \Y')\cap (\X_{v_{m+1}}\setminus \X_{v_m})\neq \emptyset$) can be treated in the same manner. 
By the calculation similar to the above, we have 
\begin{align}
\Big\{ \E\big[T_m^\ijupa(t,s)\big] \Big\}^2 &= {v_{m+1} \choose i}^2\E\Big[ h^\ijp_t(\Y)h^\ijp_t(\Y')\, \ind{\M(\Y\cup \Y')\geq R_{q_m}} \label{e:subtract.first.moment} \\
&\qquad \quad \times\Big( 1-\int_{\B(\Y;s)}f(z)dz\Big)^{v_m-i}\Big(1-\int_{\B(\Y';s)}f(z)dz\Big)^{v_m-i}\Big] \, \one \big\{ |\Y\cap \Y'|=0 \big\} . \notag 
\end{align}
By \eqref{e:EI0} and \eqref{e:subtract.first.moment}, we get that $B_m \le v_{m+1}^{2i} \E[\Xi_m] \one \big\{ |\Y\cap \Y'|=0 \big\}$, where
\begin{align*}
\Xi_m := &h^\ijp_t(\Y)h^\ijp_t(\Y')\, \ind{\M(\Y\cup \Y')\ge R_{q_m}}
\\&\times \Big\{\Big(1-\int_{\B(\Y\cup\Y';s)}f(z)dz \Big)^{v_m-2i}-\Big(1-\int_{\B(\Y;s)}f(z)dz\Big)^{v_m-i}\Big(1-\int_{\B(\Y';s)}f(z)dz\Big)^{v_m-i}\Big\}. 
\end{align*}
Furthermore, $\Xi_m$ can be decomposed as $\Xi_m=C_m+D_m$, where
$$C_m=\Xi_m\ind{\B(\Y;s)\cap\B(\Y';s)=\emptyset},$$
$$D_m=\Xi_m\ind{\B(\Y;s)\cap\B(\Y';s)\neq\emptyset}.$$
Since $\text{vol}\big(\B(\Y\cup\Y';s)\big)=\text{vol}\big(\B(\Y;s)\big)+\text{vol}\big(\B(\Y';s)\big)$ whenever $\B(\Y;s)\cap\B(\Y';s)=\emptyset$, it is straightforward to check that 
\begin{equation}  \label{e:ECm}
v_{m+1}^{2i}\E [C_m] \one \big\{ |\Y\cap \Y'|=0 \big\} = o(R_{q_m}^d), \ \ \ m\to\infty, 
\end{equation}
by   the same arguments as in \eqref{e:condtioning.expectation}, \eqref{e:etmuptrans}, and \eqref{e:compoent.part}. Moreover, by Lemma \ref{l:connectivity.prof.heavy} $(ii)$, 
\begin{align}
&v_{m+1}^{2i}\E[D_m] \, \one \big\{ |\Y\cap \Y'|=0 \big\} \label{e:EDm}\\
&\le v_{m+1}^{2i}\P\big( \cc(\Y\cup\Y', 2s) \text{ is connected}, \, \M(\Y\cup\Y')\geq R_{q_m}\big)  \one \big\{ |\Y\cap \Y'|=0 \big\} \le \cs R_{q_m}^d. \notag 
\end{align}
It thus follows that $\sup_{m\ge1} R_{q_m}^{-d}\, B_m <\infty$, and hence \eqref{e:vartmup} has been established. Since the proof of \eqref{e:vartmdown} is very similar to that of \eqref{e:vartmup}, we will omit it. 

Next, turning to \eqref{e:WmM.first} we apply Fubini's theorem to obtain that 
\begin{align*}
R_{q_m}^{-d}\, \E\big[ V_{m,M}(t) \big] &= \sum_{i=M+1}^\infty i^{k+1} \binom{v_{m+1}}{i} R_{q_m}^{-d}\, \\
&\qquad \qquad  \times \P \Big( \cc \big( \{ X_1,\dots,X_i \}, t \big) \text{ is connected}, \, \M(X_1,\dots,X_i) \ge R_{q_m} \Big). 
\end{align*}
Taking $\delta>0$ so small that $\lambda(1+\delta) e\omega_d <1$, Lemma \ref{l:connectivity.prof.heavy} $(ii)$ demonstrates that 
\begin{equation}  \label{e:after.Stirling}
R_{q_m}^{-d}\, \E\big[ V_{m,M}(t) \big]  \le C^* \sum_{i=M+1}^\infty i^{k+1} \cdot \frac{\lambda^i(1+\delta)^ii^{i-2}\omega_d^{i-1}}{i!}, 
\end{equation}
where $C^*$ is a positive constant independent of $i$ and $m$. 
By Stirling's formula $i! \ge (i/e)^i$ for sufficiently large $i$, \eqref{e:after.Stirling} is further bounded by $C^*\sum_{i=M+1}^\infty i^{k-1} \big( \lambda (1+\delta) e\omega_d \big)^i$, which is finite due to the constraint $\lambda(1+\delta)e\omega_d<1$. By Lemma \ref{l:connectivity.prof.heavy} $(i)$ and the dominated convergence theorem, one can obtain \eqref{e:WmM.first} as required. 

Finally, \eqref{e:WmM.second} can be established by combining Lemma \ref{l:connectivity.prof.heavy} and the argument similar to that for the variance asymptotics of \eqref{e:vartmup}, so we will skip the  detailed discussions. 
\end{proof}

The next lemma is complementary to the proof of Lemma \ref{l:first.second.moments.heavy}. 
\begin{lemma}\label{l:connectivity.prof.heavy}
$(i)$ Under the assumptions of Theorem \ref{t:fslln.hvt} $(ii)$, for every $i\ge k+2$ and $t\in [0,1]$, 
\begin{equation}  \label{e:conv.connectivity.prob}
v_{m+1}^i R_{q_m}^{-d} \, \P \Big( \cc\big( \{ X_1,\dots,X_i \},  t\big) \emph{ is connected}, \, \M(X_1,\dots,X_i)\ge R_{q_m}\Big) \to \lambda^i \zeta_i(t), \ \ m\to\infty, 
\end{equation}
where $\zeta_i(t)$ is given in  \eqref{e:def.zetai}.  \\
\noindent $(ii)$ Moreover, let $\delta>0$ be a constant so small that $\lambda (1+\delta) e\omega_d < 1$. Then, for each $i\ge k+2$ and $t\in [0,1]$, there exists $N\in\N$ such that for all $m \ge N$, 
\begin{align}
&v_{m+1}^i R_{q_m}^{-d} \,  \P\Big( \cc\big( \{ X_1,\dots,X_i \},  t\big) \emph{ is connected}, \, \M(X_1,\dots,X_i)\ge R_{q_m}\Big) \le  C^*\lambda^i(1+\delta)^ii^{i-2}\omega_d^{i-1},  \label{e:bound.connectivity.prob} 
\end{align}
where $C^*>0$ is a constant independent of $i$ and $m$. 
\end{lemma}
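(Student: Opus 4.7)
The plan is to derive both parts from a common change-of-variables reduction identical in spirit to the one already used in the proof of Lemma \ref{l:first.second.moments.heavy} for $\E[T_m^{(i,j,\uparrow)}(t,s)]$. Writing the probability as
$$
\P(\cdots) = \int_{(\R^d)^i} \one \big\{ \cc(\bx, t) \text{ connected} \big\}\, \one \{\M(\bx)\ge R_{q_m}\}\, \prod_{\ell=1}^i f(x_\ell)\, d\bx,
$$
I would substitute $x_1=x$, $x_\ell = x+y_{\ell-1}$ (using the obvious shift invariance of the connectivity indicator, analogous to \eqref{e:shift.invariance}), switch to polar coordinates $x=r\theta$, and finally rescale $r=R_{q_m}\rho$. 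This produces, exactly as in \eqref{e:initial.change.variables}--\eqref{e:etmuptrans} but without any ``survival'' factor,
\begin{align*}
v_{m+1}^i R_{q_m}^{-d}\, \P(\cdots) &= v_{m+1}^i f(R_{q_m})^i \int_{S^{d-1}} J(\theta) \int_1^\infty \rho^{d-1} \int_{(\R^d)^{i-1}} \one \{\cc(\{0,\by\}, t)\text{ connected}\} \\
&\quad \times \frac{f(R_{q_m}\rho)}{f(R_{q_m})}\prod_{\ell=1}^{i-1} \frac{f\big(R_{q_m}\|\rho\theta+y_\ell/R_{q_m}\|\big)}{f(R_{q_m})}\, \one \{\|\rho\theta+y_\ell/R_{q_m}\|\ge 1\}\, d\by\, d\rho\, d\theta.
\end{align*}

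For Part (i), the prefactor satisfies $v_{m+1}^i f(R_{q_m})^i \to \lambda^i$ by Lemma \ref{l:RV.seq.heavy} together with the assumption $nf(R_n)\to\lambda$. On $\rho>1$, the density ratios converge pointwise to $\rho^{-\alpha}$ by \eqref{e:hvytl}, and the indicators $\one\{\|\rho\theta+y_\ell/R_{q_m}\|\ge 1\}$ tend to $1$ because $R_{q_m}\to\infty$. Potter's bounds \eqref{bd1}--\eqref{bd2}, applied with some $\xi\in(0,\alpha-d)$, supply the integrable majorant $(1+\xi)^i \rho^{d-1-\alpha+\xi}$ in $\rho$, while the connectivity indicator confines the $\by$-integration to a bounded region for each fixed $t$. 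Dominated convergence then yields
$$
v_{m+1}^i R_{q_m}^{-d}\, \P(\cdots) \to \lambda^i\, s_{d-1}\int_1^\infty \rho^{d-1-\alpha i}\, d\rho \int_{(\R^d)^{i-1}} \one \{\cc(\{0,\by\},t)\text{ connected}\}\, d\by = \lambda^i \zeta_i(t),
$$
by \eqref{e:def.zetai}.

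For Part (ii), I bound every factor uniformly. Potter's inequality gives the $\rho$-integrable estimate $(1+\xi)^i\rho^{d-1-\alpha+\xi}/(\alpha-d-\xi)$ on the density ratios (integrated against $\rho^{d-1}d\rho$), while the $\by$-integral is handled by a Cayley spanning-tree estimate: if $\cc(\{0,\by\},t)$ is connected, it contains a labeled spanning tree on $i$ vertices, and there are $i^{i-2}$ such trees. For each rooted at $0$, the change of variables $z_\ell = y_\ell - y_{\text{par}(\ell)}$ has unit Jacobian, so
$$
\int_{(\R^d)^{i-1}} \one \{\cc(\{0,\by\},t)\text{ connected}\}\, d\by \le i^{i-2}(\omega_d t^d)^{i-1}\le i^{i-2}\omega_d^{i-1}
$$
using $t\le 1$. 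Finally, since $v_{m+1}f(R_{q_m})\to\lambda$, for any preset $\delta'>0$ there exists $N$ such that $v_{m+1}^i f(R_{q_m})^i \le \lambda^i(1+\delta')^i$ for all $m\ge N$; choosing $\delta',\xi>0$ small so that $(1+\delta')(1+\xi)\le 1+\delta$ produces the asserted bound with $C^* = s_{d-1}/(\alpha-d-\xi)$.

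The main delicate step is the uniform-in-$i$ bookkeeping in Part (ii): the two small-error multiplicative factors, the one from $v_{m+1}^i f(R_{q_m})^i\to\lambda^i$ and the one from Potter's inequality, must be combined into a single $(1+\delta)^i$ without $\delta$ depending on $i$. This is essentially deterministic but requires that $\delta',\xi$ be fixed at the outset and that Potter's inequality be applied in the form valid uniformly for all $\rho\ge 1$ and $\|\rho\theta+y_\ell/R_{q_m}\|\ge 1$, so that the resulting $(1+\xi)^i$ factor is a genuine uniform bound once $m\ge N$.
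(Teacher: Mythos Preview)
Your proposal is correct and follows essentially the same route as the paper: the identical change of variables $x_1=x$, $x_\ell=x+y_{\ell-1}$, polar coordinates, and rescaling $r=R_{q_m}\rho$, followed by Potter's bounds for the dominated convergence in Part (i) and the Cayley spanning-tree estimate $i^{i-2}\omega_d^{i-1}$ for Part (ii). The only cosmetic difference is in how the two small multiplicative errors are merged into $(1+\delta)^i$---the paper takes $\xi=\min\{(\alpha-d)/2,\delta/2\}$ and then uses $(1+\xi)v_{m+1}f(R_{q_m})\le\lambda(1+\delta)$ directly, whereas you introduce an auxiliary $\delta'$ with $(1+\delta')(1+\xi)\le 1+\delta$; both are equivalent bookkeeping and yield an $N$ that is in fact uniform in $i$.
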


\begin{proof}
We first prove \eqref{e:bound.connectivity.prob}. 
By the change of variables $x_1=x$, $x_\ell=x+y_{\ell-1}$ for $\ell\in \{2,\dots,i\}$, which is followed by an additional change of variables $x\leftrightarrow(R_{q_m}\rho,\theta)$ as in \eqref{e:polar1} and \eqref{e:etmuptrans}, we have that 
\begin{align}
&v_{m+1}^i R_{q_m}^{-d}\, \P\Big( \cc\big( \{ X_1,\dots,X_i \},  t\big) \text{ is connected}, \, \M(X_1,\dots,X_i)\ge R_{q_m}\Big) \label{e:expii-2} \\
&=v_{m+1}^i f(R_{q_m})^i\int_{S^{d-1}}J(\theta)\int_1^\infty\rho^{d-1}\int_{(\R^d)^{i-1}}\ind{\cc\big(\{0,\by\},t\big) \text{ is connected}}\notag \\
&\qquad \qquad \times \frac{f(R_{q_m}\rho)}{f(R_{q_m})}\prod_{\ell=1}^{i-1}\frac{f\big(R_{q_m}\|\rho\theta+y_\ell /R_{q_m} \|\big)}{f(R_{q_m})}\, \ind{\|\rho\theta+y_\ell/R_{q_m}\|\geq1}d\by\, d\rho\, d\theta.\notag
\end{align}
Employing Potter's bound as in \eqref{bd1} and \eqref{bd2} with $\xi=\min\{(\alpha-d)/2,\delta/2\}$, there exists $N_1\in \N$, such that for all $m\ge N_1$, 
\begin{equation}  \label{e:Potter.connectivity}
\frac{f(R_{q_m}\rho)}{f(R_{q_m})}\prod_{\ell=1}^{i-1}\frac{f\big(R_{q_m}\|\rho\theta+y_\ell /R_{q_m} \|\big)}{f(R_{q_m})}\, \ind{\|\rho\theta+y_\ell/R_{q_m}\|\geq1}\leq(1+\xi)^i\rho^{-\alpha+\xi}. 
\end{equation}
By the definition of $\xi$ above  and Lemma \ref{l:RV.seq.heavy}, one can find $N_2\in\N$, such that for all $m\geq N_2$, $(1+\xi)v_{m+1}f(R_{q_m})\leq\lambda(1+\delta)$. Thus, for all  $m \ge N:=N_1\vee N_2$, 
\begin{align*}
&v_{m+1}^iR_{q_m}^{-d}\, \P\Big( \cc\big( \{ X_1,\dots,X_i \},  t\big) \text{ is connected}, \, \M(X_1,\dots,X_i)\ge R_{q_m}\Big) \\
&\le \frac{s_{d-1}\big((1+\xi)v_{m+1}f(R_{q_m})\big)^i}{\alpha-d-\xi}\int_{(\R^d)^{i-1}}\ind{\cc\big(\{0,\by\}, t\big) \text{ is connected}}d\by\\
&\le \frac{2s_{d-1}\lambda^i (1+\delta)^i}{ \alpha-d}\int_{(\R^d)^{i-1}}\ind{\cc\big(\{0,\by\}, 1\big) \text{ is connected}}d\by\\
&\le \frac{2s_{d-1}\lambda^i(1+\delta)^ii^{i-2}\omega_d^{i-1}}{\alpha-d}. 
\end{align*}
The last inequality above follows from an elementary fact that there exist $i^{i-2}$ spanning trees on a set of $i$ points. 

For the proof of \eqref{e:conv.connectivity.prob}, returning to \eqref{e:expii-2} we see that $v_{m+1}^i f(R_{q_m})^i\to \lambda$ as $m\to\infty$ by Lemma \ref{l:RV.seq.heavy}. By the dominated convergence theorem with the regular variation assumption \eqref{e:hvytl} and an integrable bound obtained in   \eqref{e:Potter.connectivity}, we can get \eqref{e:conv.connectivity.prob}. 
\end{proof}

All the lemmas below will apply to the proof of Theorem \ref{t:fslln.hvt} $(i)$. We provide the asymptotic moments of $S_m^\uparrow (t)$, $S_m^\downarrow(t)$, and $W_m(t)$; see \eqref{e:def.Smup}, \eqref{e:def.Smdown}, and \eqref{e:def.Wm}. 

\begin{lemma}  \label{l:first.second.asymptotics}
Under the assumptions of Theorem \ref{t:fslln.hvt} $(i)$, for every $t\in [0,1]$ we have that as $m\to\infty$, 
\begin{align*}
\big( v_{m+1}^{k+2} R_{q_m}^d f(R_{q_m})^{k+2} \big)^{-1} \E \big[S_m^\uparrow (t)\big] &\to \frac{\mu_k^\ktop (t; 0)}{(k+2)!}, \\
\big( v_{m}^{k+2} R_{p_m}^d f(R_{p_m})^{k+2} \big)^{-1} \E \big[S_m^\downarrow (t)\big] &\to \frac{\mu_k^\ktop (t; 0)}{(k+2)!}, 
\end{align*}
and also, 
$$
\big( v_{m+1}^{k+3} R_{q_m}^d f(R_{q_m})^{k+3} \big)^{-1} \E \big[ W_m(t) \big] \to \frac{s_{d-1}}{(k+3)!\big( \alpha (k+3)-d \big)}\,  \int_{(\R^d)^{k+2}} \hspace{-10pt}\one \Big\{  \cc \big( \{ 0,\by \}, t \big) \text{ is connected} \Big\} d\by, 
$$
where $\by=(y_1,\dots,y_{k+2})\in (\R^d)^{k+2}$. 
Moreover, 
\begin{align*}
&\sup_{m\ge 1}\big( v_{m+1}^{k+2} R_{q_m}^d f(R_{q_m})^{k+2} \big)^{-1}  \text{Var}\big( S_m^\uparrow (t) \big) <\infty, \\
&\sup_{m\ge 1} \big( v_{m}^{k+2} R_{p_m}^d f(R_{p_m})^{k+2} \big)^{-1}\text{Var}\big( S_m^\downarrow (t) \big) <\infty. 
\end{align*}
\end{lemma}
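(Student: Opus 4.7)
The plan is to treat each of $S_m^\uparrow(t)$, $S_m^\downarrow(t)$, and $W_m(t)$ as Poisson $U$-statistics (since the underlying point processes $\mathcal{P}_{v_m}, \mathcal{P}_{v_{m+1}}$ are Poisson) and compute first and second moments via the multivariate Mecke formula. The proof will parallel the argument of Lemma \ref{l:first.second.moments.heavy}, but is in fact simpler because $S_m^\uparrow, S_m^\downarrow, W_m$ do not involve the connected-component indicator $\one\{\B(\Y;s/2) \cap \B(\cdot;s/2)=\emptyset\}$, so the $\big(1 - I_s(\cdots)\big)^{v_m-i}$ factor from the proof of Lemma \ref{l:first.second.moments.heavy} is absent. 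Consequently, the exponential factor $e^{-\lambda\rho^{-\alpha}t^d\,\mathrm{vol}(\cdot)}$ in $\mu_k^{(i,j)}(t;\lambda)$ collapses to $1$, which is why the limiting constants in the statement involve $\mu_k^{(k+2,1,+)}(t;0)$.

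For the first moment of $S_m^\uparrow(t)$, I would apply the Mecke formula to write
$$
\E[S_m^\uparrow(t)] = \frac{v_{m+1}^{k+2}}{(k+2)!} \int_{(\R^d)^{k+2}} h_t^{(k+2,1,+)}(x_1,\dots,x_{k+2})\, \one\{\M \ge R_{q_m}\}\prod_{\ell=1}^{k+2} f(x_\ell)\, d\bx,
$$
then apply the sequence of substitutions used in the derivation of \eqref{e:etmuptrans}: translate by setting $x_1 = x,\, x_\ell = x + y_{\ell-1}$ (invoking shift invariance \eqref{e:shift.invariance}), pass to polar coordinates $x \leftrightarrow (r,\theta)$, and then rescale $r = R_{q_m}\rho$. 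This factors out $v_{m+1}^{k+2} R_{q_m}^d f(R_{q_m})^{k+2}/(k+2)!$, leaving a remaining integrand of density ratios that converges pointwise to $\rho^{-\alpha(k+2)}$ by \eqref{e:hvytl}. Potter's bounds provide an integrable majorant exactly as in \eqref{bd1}--\eqref{bd2}, so the dominated convergence theorem yields the claimed limit $\mu_k^{(k+2,1,+)}(t;0)/(k+2)!$. The analysis for $\E[S_m^\downarrow(t)]$ is identical after replacing $v_{m+1}$ with $v_m$ and $R_{q_m}$ with $R_{p_m}$, and the computation for $\E[W_m(t)]$ is the same with $i=k+3$, replacing $h_t^{(k+2,1,+)}(0,\by)$ by $\one\{\cc(\{0,\by\},t) \text{ is connected}\}$ and explicitly integrating $\rho^{d-1-\alpha(k+3)}$ from $1$ to $\infty$, which gives the factor $1/(\alpha(k+3)-d)$.

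For the variance bounds, I would invoke the standard decomposition for a Poisson $U$-statistic of order $i=k+2$,
$$
\mathrm{Var}(S_m^\uparrow(t)) = \sum_{\ell=1}^{k+2} \frac{v_{m+1}^{2(k+2)-\ell}}{\ell!\,\bigl((k+2-\ell)!\bigr)^2}\int h_t^{(k+2,1,+)}(\Y)\, h_t^{(k+2,1,+)}(\Y')\, \one\{\M(\Y\cup\Y')\ge R_{q_m}\}\prod f,
$$
where $\Y,\Y'$ share exactly $\ell$ points. The $\ell = k+2$ term equals $\E[S_m^\uparrow(t)]$ itself, which grows like $v_{m+1}^{k+2}R_{q_m}^d f(R_{q_m})^{k+2}$. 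For $1 \le \ell \le k+1$, the locally determined property \eqref{e:locally.determined} confines $\Y \cup \Y'$ to a set of diameter $\le 2Lt$ around any shared point, so performing the same change of variables as above (fix one shared point, shift the remaining $2(k+2)-\ell-1$ points, polar-coordinates, rescale) bounds the $\ell$-th term by $C^* v_{m+1}^{2(k+2)-\ell} R_{q_m}^d f(R_{q_m})^{2(k+2)-\ell}$. Dividing by $v_{m+1}^{k+2} R_{q_m}^d f(R_{q_m})^{k+2}$ leaves a factor $(v_{m+1} f(R_{q_m}))^{k+2-\ell}$, which is bounded (in fact vanishes) because $v_{m+1}f(R_{q_m}) \sim n f(R_n) \to 0$ by hypothesis and Lemma \ref{l:RV.seq.heavy}. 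Summing over $\ell$ yields the supremum bound. The treatment of $\mathrm{Var}(S_m^\downarrow(t))$ is identical.

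The main technical burden is producing the integrable Potter-type majorant uniformly in $m$ for both the first-moment limit and each $\ell$-term of the variance, and verifying that the shared-point geometry combined with the locally determined property allows one to extract exactly $R_{q_m}^d f(R_{q_m})^{2(k+2)-\ell}$ from each summand; this is the only non-routine point, and it follows the template already set by the proof of \eqref{e:vartmup} in Lemma \ref{l:first.second.moments.heavy}, so no genuinely new obstacle arises.
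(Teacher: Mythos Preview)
Your proposal is correct and follows essentially the same approach as the paper: the paper's proof also invokes Palm theory (the Mecke formula) to express $\E[S_m^\uparrow(t)]$, applies the identical sequence of changes of variables as in \eqref{e:etmuptrans}, and then defers entirely to the argument after \eqref{e:etmuptrans} for the remaining steps. Your observation that the absence of the connected-component indicator eliminates the $(1-I_s)^{v_m-i}$ factor, and your explicit Poisson $U$-statistic variance decomposition (where the $\ell=0$ term cancels exactly with $(\E)^2$, avoiding the $B_m$ analysis of Lemma \ref{l:first.second.moments.heavy}), are precisely the simplifications that make this lemma easier than its predecessor.
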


\begin{proof}
The proof here is mostly the same as those for Lemma \ref{l:first.second.moments.heavy}, so we provide only the sketch of proof for the first statement. Appealing to \emph{Palm theory for Poisson processes} (see, e.g., Theorem 1.6 in \cite{penrose:2003}), 
$$
\E\big[S_m^\uparrow (t) \big] = \frac{v_{m+1}^{k+2}}{(k+2)!}\, \int_{(\R^d)^{k+2}} h_t^\ktop (x_1,\dots,x_{k+2}) \, \one \big\{ \M(x_1,\dots,x_{k+2}) \ge R_{q_m}\big\}\prod_{\ell=1}^{k+2} f(x_\ell) d\bx. 
$$ 
By the same change of variables as in \eqref{e:etmuptrans} with $i=k+2$ and $j=1$, 
\begin{align*}
\E \big[ S_m^\uparrow(t)\big]=&\frac{v_{m+1}^{k+2}R_{q_m}^d f(R_{q_m})^{k+2}}{(k+2)!}\,   \int_{S^{d-1}}J(\theta)\int_1^{\infty}\rho^{d-1}\int_{(\R^d)^{k+1}}h^\ktop_t(0,\by)\\
&\qquad  \times \frac{f(R_{q_m}\rho)}{f(R_{q_m})}  \prod_{\ell=1}^{k+1}\frac{f\big(R_{q_m}\|\rho\theta+y_\ell /R_{q_m} \|\big)}{f(R_{q_m})}\, \ind{\|\rho\theta+y_\ell/R_{q_m}\|\geq1} d\by\, d\rho\, d\theta. 
\end{align*}
The rest of our discussion is completely the same as the argument after \eqref{e:etmuptrans}. 
\end{proof}

The next result justifies that with a proper scaling, the asymptotic behaviors of $S_m^\uparrow(t)$ and $S_m^\downarrow(t)$ will remain unchanged, even if the Poisson point process is replaced with the corresponding binomial process. 

\begin{lemma}  \label{l:difference.binom.poisson}
Under the assumptions of Theorem \ref{t:fslln.hvt} $(i)$, for every $t\in [0,1]$ we have, as $m\to\infty$, 
\begin{align*}
&\big( v_{m+1}^{k+2} R_{q_m}^df(R_{q_m})^{k+2} \big)^{-1} \bigg\{  \sum_{\substack{\Y \subset \X_{v_{m+1}}, \\ |\Y|=k+2}} h_t^\ktop(\Y)\, \one \big\{ \M(\Y) \ge R_{q_m} \big\} -S_m^\uparrow(t) \bigg\} \to 0, \ \ \text{a.s.}, \\
&\big( v_{m}^{k+2} R_{p_m}^df(R_{p_m})^{k+2} \big)^{-1} \bigg\{  \sum_{\substack{\Y \subset \X_{v_{m}}, \\ |\Y|=k+2}} h_t^\ktop(\Y)\, \one \big\{ \M(\Y) \ge R_{p_m} \big\} -S_m^\downarrow(t) \bigg\} \to 0, \ \ \text{a.s.}, 
\end{align*}
and further, 
$$
\big( v_{m+1}^{k+3} R_{q_m}^d f(R_{q_m})^{k+3}\big)^{-1} \bigg\{ \sum_{\substack{\Y\subset \X_{v_{m+1}}, \\ |\Y|=k+3}} \one \big\{ \cc(\Y, t) \text{ is connected}, \, \M(\Y)\ge R_{q_m} \big\} - W_m(t)\bigg\} \to 0, \ \ \text{a.s.}
$$
\end{lemma}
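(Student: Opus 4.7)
The plan is to construct a standard coupling between the binomial and Poisson point processes so that all three differences can be controlled by a single expected-value estimate combined with Markov's inequality and the Borel--Cantelli lemma. Specifically, let $X_1,X_2,\dots$ be i.i.d.\ with density $f$ and let $N$ be Poisson$(v_{m+1})$-distributed and independent of $(X_i)$. Set $\X_{v_{m+1}}:=\{X_1,\dots,X_{v_{m+1}}\}$ and $\mathcal{P}_{v_{m+1}}:=\{X_1,\dots,X_N\}$; their symmetric difference then contains exactly $|N-v_{m+1}|$ points, each an exchangeable copy of the generic $X$. Under this coupling, the absolute difference $D_m$ between the binomial sum and $S_m^\uparrow(t)$ is bounded above by the number of $(k+2)$-tuples satisfying $h_t^\ktop$ and $\M\ge R_{q_m}$ that contain at least one point from the symmetric difference:
\[
D_m \le \sum_{i=\min(N,v_{m+1})+1}^{\max(N,v_{m+1})}\ \sum_{\substack{\Y \subset \{X_1,\dots,X_{\max(N,v_{m+1})}\}\setminus\{X_i\} \\ |\Y|=k+1}} h_t^\ktop(X_i,\Y)\,\one\{\M(X_i,\Y)\ge R_{q_m}\}.
\]
Tuples with two or more points in the symmetric difference are overcounted, but this is harmless as an upper bound.

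Conditioning on $N$ and invoking exchangeability gives $\E[D_m\mid N] \le |N-v_{m+1}|\binom{\max(N,v_{m+1})-1}{k+1}\,p_m$, where
\[
p_m := \P\bigl(h_t^\ktop(X_1,\dots,X_{k+2})=1,\ \M(X_1,\dots,X_{k+2})\ge R_{q_m}\bigr).
\]
The key single-indicator bound $p_m \le C^* R_{q_m}^d f(R_{q_m})^{k+2}$ for all sufficiently large $m$ follows from exactly the polar-coordinate change of variables and Potter's bound used in the proofs of Lemma \ref{l:first.second.moments.heavy} and Lemma \ref{l:connectivity.prof.heavy}; those computations can be recycled verbatim. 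Combining this with Cauchy--Schwarz together with $\Var(N)=v_{m+1}$ and $\E[\max(N,v_{m+1})^{2(k+1)}]\le C^* v_{m+1}^{2(k+1)}$ then yields
\[
\E[D_m] \le C^*\sqrt{v_{m+1}}\,v_{m+1}^{k+1}\,R_{q_m}^d f(R_{q_m})^{k+2} = C^*\, v_{m+1}^{-1/2}\cdot v_{m+1}^{k+2}R_{q_m}^d f(R_{q_m})^{k+2}.
\]

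Writing $a_m := v_{m+1}^{k+2}R_{q_m}^d f(R_{q_m})^{k+2}$, Markov's inequality gives $\P(D_m > \epsilon a_m) \le C^*/(\epsilon v_{m+1}^{1/2})$, which by $v_m=\lfloor e^{m^\gamma}\rfloor$ is of order $e^{-m^\gamma/2}$ and hence summable in $m$; Borel--Cantelli then yields $D_m/a_m\to 0$ a.s., proving the first claim. The second claim follows identically, simply swapping the roles of $v_{m+1}$ and $v_m$ and using $R_{p_m}$ in place of $R_{q_m}$. For the third claim the same coupling is applied but the indicator is now $\one\{\cc(\Y,t)\text{ is connected}\}$ on $k+3$ points; the analogous bound $\tilde p_m \le C^* R_{q_m}^d f(R_{q_m})^{k+3}$ (essentially Lemma \ref{l:connectivity.prof.heavy}$(ii)$ with $i=k+3$) together with Cauchy--Schwarz gives an expected-difference of order $\sqrt{v_{m+1}}\,v_{m+1}^{k+2}R_{q_m}^d f(R_{q_m})^{k+3}$, so the ratio to $v_{m+1}^{k+3}R_{q_m}^d f(R_{q_m})^{k+3}$ is again $O(v_{m+1}^{-1/2})$ and Borel--Cantelli closes the argument. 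The only genuinely non-routine step is establishing the uniform-in-$m$ Potter control on $p_m$ and $\tilde p_m$, but since precisely that argument has already been carried out in detail earlier in the paper, the entire proof reduces to careful bookkeeping.
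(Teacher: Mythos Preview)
Your proof is correct and follows essentially the same approach as the paper: both bound the expected absolute difference by (a combinatorial factor in $|N-v_{m+1}|$) times the single-indicator probability $p_m\le C^* R_{q_m}^d f(R_{q_m})^{k+2}$, apply Cauchy--Schwarz to extract the $O(v_{m+1}^{-1/2})$ rate, and conclude via Markov plus Borel--Cantelli using $\sum_m e^{-m^\gamma/2}<\infty$. The only cosmetic difference is that the paper conditions on $N=\ell$ and uses the exact count $\bigl|\binom{\ell}{k+2}-\binom{v_{m+1}}{k+2}\bigr|$ together with factorial-moment formulas for the Poisson, whereas you use the union-bound overcount $|N-v_{m+1}|\binom{\max(N,v_{m+1})-1}{k+1}$; both lead to the same $v_{m+1}^{-1/2}$ conclusion.
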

\begin{proof}
The proofs of these statements are essentially the same, so we show only the first result. By the Borel-Cantelli lemma and Markov's inequality, it suffices to demonstrate that 
\begin{align}
&\sum_{m=1}^\infty \big( v_{m+1}^{k+2} R_{q_m}^df(R_{q_m})^{k+2} \big)^{-1} \E\bigg[\,  \bigg|  \sum_{\substack{\Y \subset \X_{v_{m+1}}, \\ |\Y|=k+2}} h_t^\ktop(\Y)\, \one \big\{ \M(\Y) \ge R_{q_m} \big\}  -S_m^\uparrow(t) \, \bigg| \, \bigg] < \infty. \label{e:BC.with.Markov}
\end{align}
Recall that $|\mathcal P_{v_{m+1}}|$ (i.e., the cardinality of $\mathcal P_{v_{m+1}}$) is Poisson distributed with parameter $v_{m+1}$.  By the conditioning on the values of $|\mathcal P_{v_{m+1}}|$, we get that 
\begin{align}
&\E\bigg[\,  \bigg|  \sum_{\substack{\Y \subset \X_{v_{m+1}}, \\ |\Y|=k+2}} h_t^\ktop(\Y)\, \one \big\{ \M(\Y) \ge R_{q_m} \big\} -S_m^\uparrow(t)\, \bigg|\,  \bigg] \label{e:after.Markov} \\
&=\sum_{\ell=0}^\infty \E\bigg[\,  \bigg|  \sum_{\Y \subset \X_{v_{m+1}}, \, |\Y|=k+2} \hspace{-10pt} h_t^\ktop(\Y)\, \one \big\{ \M(\Y) \ge R_{q_m} \big\} \notag \\
& \qquad \qquad \qquad \qquad \qquad - \sum_{\substack{\Y \subset \X_\ell, \\ |\Y|=k+2}} h_t^\ktop(\Y)\, \one \big\{ \M(\Y) \ge R_{q_m} \big\} \bigg|\,  \bigg] \P\big( |\mathcal P_{v_{m+1}}|=\ell \big) \notag \\
&= \sum_{\ell=0}^\infty \Big| \binom{\ell}{k+2} -\binom{v_{m+1}}{k+2}  \Big| \, \E \big[ h_t^\ktop(X_1,\dots,X_{k+2})\, \notag \\
&\qquad \qquad \qquad \qquad \qquad \qquad\qquad \qquad \times \one \big\{ \M(X_1,\dots,X_{k+2})\ge R_{q_m}  \big\}  \big] \P\big( |\mathcal P_{v_{m+1}}|=\ell \big), \notag 
\end{align}
where $X_1,\dots, X_{k+2}$ are i.i.d random variables with density $f$. 
Proceeding as in the proof of Lemma \ref{l:first.second.moments.heavy}, we can derive that  
$$
\E \big[ h_t^\ktop(X_1,\dots,X_{k+2})\, \one \big\{ \M(X_1,\dots,X_{k+2})\ge R_{q_m}  \big\}  \big] \sim C^* R_{q_m}^d f(R_{q_m})^{k+2}, \ \ \ m\to\infty. 
$$
Referring this back into \eqref{e:after.Markov}, the left hand side in \eqref{e:BC.with.Markov} is now bounded by a constant multiple of 
\begin{align}
&\sum_{m=1}^\infty \frac{1}{v_{m+1}^{k+2}}\, \sum_{\ell=0}^\infty \Big| \binom{\ell}{k+2} -\binom{v_{m+1}}{k+2}  \Big| \,  \P\big( |\mathcal P_{v_{m+1}}|=\ell \big) \label{e:CS.inequ} \\
&=\sum_{m=1}^\infty \frac{1}{v_{m+1}^{k+2}}\, \E \bigg[ \Big| \binom{|\mathcal P_{v_{m+1}}|}{k+2} -\binom{v_{m+1}}{k+2}  \Big|  \bigg] \notag \\
&\le \sum_{m=1}^\infty \frac{1}{v_{m+1}^{k+2}}\, \bigg\{ \E \bigg[  \binom{|\mathcal P_{v_{m+1}}|}{k+2}^2 \bigg] - 2\binom{v_{m+1}}{k+2} \E \bigg[  \binom{|\mathcal P_{v_{m+1}}|}{k+2} \bigg] + \binom{v_{m+1}}{k+2}^2  \bigg\}^{1/2}, \notag
\end{align}
where the last relation is due to the Cauchy-Schwarz inequality. It is elementary to check that there are constants $c_j$, $j=1,2,\dots,2k+4$,  with $c_{2k+4}=1$, such that 
\begin{align*}
 &\E \bigg[  \binom{|\mathcal P_{v_{m+1}}|}{k+2} \bigg] = \frac{v_{m+1}^{k+2}}{(k+2)!}, \ \ \text{and } \  \E \bigg[  \binom{|\mathcal P_{v_{m+1}}|}{k+2}^2 \bigg]  =\frac{1}{\big((k+2)!\big)^2}\, \sum_{j=1}^{2k+4}c_j v_{m+1}^j. 
\end{align*}
Therefore, the last expression in \eqref{e:CS.inequ} can be written as 
\begin{equation}  \label{e:last.part.negligibility}
\sum_{m=1}^\infty \frac{1}{v_{m+1}^{k+2}} \cdot \frac{1}{(k+2)!}\Big(  \sum_{j=1}^{2k+3}c_j' v_{m+1}^j \Big)^{1/2}
\end{equation}
for some constants $c_j'$, $j=1,\dots,2k+3$ (note that $v_{m+1}^{2k+4}$ has disappeared here). Finally, \eqref{e:last.part.negligibility} is further bounded by 
$$
C^*\sum_{m=1}^\infty \frac{1}{v_{m+1}^{1/2}} \le C^*\sum_{m=1}^\infty e^{-m^\gamma/2} < \infty. 
$$
\end{proof}

\subsection{Technical lemmas for the proof of Theorem \ref{t:fslln.expdt}}  \label{sec:technical.lemma.exp}

In the below we deduce various auxiliary results for the proof of Theorem \ref{t:fslln.expdt}, all of which are analogous to the corresponding lemmas in Section \ref{sec:technical.lemma.heavy}. Among them, Lemma \ref{l:RV.seq.exp1} below analyzes asymptotic ratios of the  sequences as a function of $v_m$, $p_m$, $q_m$, $b_m$, $c_m$, $e_m$, and $g_m$ defined in \eqref{e:def.vm}, \eqref{e:def.pmqm}, \eqref{e:def.bmcm}, and \eqref{e:def.emgm}. Moreover, Lemmas \ref{l:first.second.moments.exp1} and \ref{l:first.second.moments.exp2} are used for calculating the asymptotic moments of various quantities appearing in the proof. Before stating these lemmas, recall the notations $T_m^\ijupa (t,s)$, $T_m^\ijdoa(t,s)$, $V_{m,M}(t)$, $S_m^\uparrow(t)$, $S_m^\downarrow(t)$, and $W_m(t)$, which are  defined respectively at  \eqref{e:def.upbd}, \eqref{e:def.lobd}, \eqref{e:Wmmt}, \eqref{e:def.Smup}, \eqref{e:def.Smdown}, and \eqref{e:def.Wm}. 

\begin{lemma}   \label{l:RV.seq.exp1}
Under the setup of Theorem \ref{t:fslln.expdt}, let $u_m$, $w_m$ be any of the sequences in \eqref{e:def.vm}, \eqref{e:def.pmqm},  \eqref{e:def.bmcm}, and \eqref{e:def.emgm}.  Then, as $m\to\infty$, 
$$
\frac{R_{u_m}}{R_{w_m}} \to 1, \ \ \
\frac{a(R_{u_m})}{a(R_{w_m})} \to 1, \ \ \ 
\frac{f(R_{u_m})}{f(R_{w_m})} \to 1. 
$$
\end{lemma}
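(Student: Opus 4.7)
The plan is to parallel the structure of Lemma \ref{l:RV.seq.heavy}, adapted to the slowly varying (in $n$) nature of $R_n$ in the exponentially decaying tail regime. First I would observe that every sequence among $v_m, p_m, q_m, b_m, c_m, e_m, g_m$ lies in the discrete interval $[v_m, v_{m+1}]$, so the two-sided bound $v_m/v_{m+1} \le u_m/w_m \le v_{m+1}/v_m$ combined with \eqref{e:ratio.vms} (which only uses $\gamma \in (0,1)$) yields $u_m/w_m \to 1$. This is the common entry point for the three remaining claims.

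For $R_{u_m}/R_{w_m} \to 1$, I would exploit the explicit form of $R_n$ in both regimes. In Part $(i)$ of Theorem \ref{t:fslln.expdt} we have $R_n = \psi^\leftarrow(\log n + b\log\log n)$ by definition, while in Part $(ii)$ the hypothesis $nf(R_n) \to \lambda$ combined with \eqref{e:exp.density} gives $\psi(R_n) = \log n + \log(C/\lambda) + o(1)$, hence $R_n = \psi^\leftarrow\big(\log n + \log(C/\lambda) + o(1)\big)$. Since $u_m/w_m \to 1$, in either regime the argument of $\psi^\leftarrow$ evaluated at $u_m$ and at $w_m$ has ratio tending to $1$ while both arguments tend to infinity. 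Because $\psi^\leftarrow \in \text{RV}_{1/\tau}$, the uniform convergence property of regularly varying functions on compact intervals bounded away from zero (Proposition 2.4 in \cite{resnick:2007}) now gives $R_{u_m}/R_{w_m} \to 1$, mirroring the second step in the proof of Lemma \ref{l:RV.seq.heavy}.

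The claim for $a$ then follows immediately: $a \in \text{RV}_{1-\tau}$, $R_{u_m}/R_{w_m} \to 1$, and $R_{w_m} \to \infty$, so another appeal to uniform convergence yields $a(R_{u_m})/a(R_{w_m}) \to 1^{1-\tau} = 1$.

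The main obstacle is the ratio
\[
\frac{f(R_{u_m})}{f(R_{w_m})} = \exp\big\{\psi(R_{w_m}) - \psi(R_{u_m})\big\},
\]
where the difficulty is that $\psi(R_n) \to \infty$, so one must control the \emph{difference} $\psi(R_{u_m}) - \psi(R_{w_m})$ rather than its ratio. Mere regular variation of $\psi$ is insufficient here; the resolution is to plug in the explicit expansions derived above, namely $\psi(R_n) = \log n + b\log\log n$ in Part $(i)$ and $\psi(R_n) = \log n + O(1)$ in Part $(ii)$. Both give
\[
\psi(R_{u_m}) - \psi(R_{w_m}) = \log(u_m/w_m) + o(1) \to 0,
\]
using $u_m/w_m \to 1$ and (in Part $(i)$) $\log\log u_m - \log\log w_m \to 0$. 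Exponentiating completes the proof.
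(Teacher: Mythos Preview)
Your proof is correct and follows essentially the same route as the paper: reduce to $u_m/w_m \to 1$ via \eqref{e:ratio.vms}, then push this through $\psi^\leftarrow \in \text{RV}_{1/\tau}$ and $a \in \text{RV}_{1-\tau}$ by uniform convergence, and handle the $f$-ratio by writing $\psi(R_n)$ explicitly in each regime. The only cosmetic difference is that the paper first isolates the relation $R_n \sim \psi^\leftarrow(\log n)$ as a standalone fact (equation \eqref{e:Rn.psi.inv}), which it then reuses later in the proof of Theorem \ref{t:fslln.expdt} $(ii)$; you may want to record that relation separately since it is needed elsewhere.
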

\begin{proof}
Because of Proposition 2.6 in \cite{resnick:2007}, we see that $\psi^\leftarrow \in \text{RV}_{1/\tau}$. 
Under the setup of Theorem \ref{t:fslln.expdt} $(i)$, we have $\psi (R_n) =\log n + b\log \log n \sim \log n$ as  $n\to\infty$. 
In the case of Theorem \ref{t:fslln.expdt} $(ii)$, we have $nf(R_n)\to \lambda\in (0,\infty)$, $n\to\infty$, which again implies  $\psi(R_n)\sim \log n$ as $n \to \infty$. In both cases, by the uniform convergence of regularly varying sequences (see, e.g., Proposition 2.4 in \cite{resnick:2007}), 
$$
\frac{\psi^\leftarrow \big( \psi(R_n) \big)}{\psi^\leftarrow (\log n)} \sim \bigg( \frac{\psi(R_n)}{\log n} \bigg)^{1/\tau} \to 1, \ \ n\to\infty. 
$$
Since $\psi^\leftarrow \big( \psi(R_n) \big)= R_n$, we conclude that 
\begin{equation}  \label{e:Rn.psi.inv}
R_n\sim \psi^\leftarrow (\log n), \ \ n\to\infty. 
\end{equation}
We are now ready to show the first statement. By the uniform convergence of regularly varying sequences, 
$$
\frac{R_{u_m}}{R_{w_m}} \sim \frac{\psi^\leftarrow(\log u_m)}{\psi^\leftarrow (\log w_m)} \sim \Big( \frac{\log u_m}{\log w_m} \Big)^{1/\tau} \to 1, \ \ \ m\to\infty, 
$$
where the last convergence is obtained from $u_m/w_m\to 1$ as $m\to\infty$ (see \eqref{e:ratio.vms}). 

Next, $a\in \text{RV}_{1-\tau}$ implies that 
$$
\frac{a(R_{u_m})}{a(R_{w_m})} \sim \Big( \frac{R_{u_m}}{R_{w_m}} \Big)^{1-\tau} \to 1, \ \ \text{as } m\to\infty. 
$$
As for the last statement, the result is obvious under the setup of Theorem \ref{t:fslln.expdt} $(ii)$. In the case of Theorem \ref{t:fslln.expdt} $(i)$, it is easy to calculate that 
$$
\frac{f(R_{u_m})}{f(R_{w_m})} = \frac{w_m}{u_m}\cdot \Big( \frac{\log w_m}{\log u_m} \Big)^b \to 1, \ \ \ m\to\infty. 
$$
\end{proof}

The following two lemmas will be applied for the proof of Theorem \ref{t:fslln.expdt} $(ii)$. 

\begin{lemma}  \label{l:first.second.moments.exp1}
$(i)$ Under the setup of Theorem \ref{t:fslln.expdt} $(ii)$, for every $t, s\in [0,1]$, $i\ge k+2$, and $j\ge 1$, we have as $m\to\infty$, 
\begin{align}
&\big( a(R_{q_m})R_{q_m}^{d-1} \big)^{-1} \E \big[ T_m^\ijupa (t,s) \big] \to \frac{\lambda^i}{i!}\, \xi_k^\ijp (t,s; \lambda),  \label{e:first.moment1.exp.weak.core}\\
&\big( a(R_{p_m})R_{p_m}^{d-1} \big)^{-1} \E \big[ T_m^\ijdoa (t,s) \big] \to \frac{\lambda^i}{i!}\, \xi_k^\ijp (t,s; \lambda). \notag
\end{align}
Moreover, 
\begin{align}
&\sup_{m\ge 1} \big( a(R_{q_m})R_{q_m}^{d-1} \big)^{-1}  \text{Var} \big(T_m^\ijupa (t,s)  \big) <\infty, \label{e:second.moment1.exp.weak.core}\\
&\sup_{m\ge 1} \big( a(R_{p_m})R_{p_m}^{d-1} \big)^{-1}  \text{Var} \big(T_m^\ijdoa (t,s)  \big) <\infty. \notag
\end{align}
$(ii)$ Furthermore, for every $M\in \N$ and $t\in [0,1]$, we have as $m\to\infty$, 
\begin{equation}  \label{e:first.moment.VmM.exp.weak.core}
\big( a(R_{q_m})R_{q_m}^{d-1} \big)^{-1}  \E \big[ V_{m,M}(t) \big] \to \sum_{i=M+1}^\infty i^{k+1}\frac{\lambda^i}{i!}\, \zeta'_i(t) <\infty, 
\end{equation}
where 
\begin{align}
\zeta'_i(t)&:= \int_{S^{d-1}}\int_0^\infty \int_{(\R^d)^{i-1}} \one \Big\{ \cc\big( \{  0,\by\}, t \big) \text{ is connected} \Big\}\, e^{-\rho i - c^{-1} \sum_{\ell=1}^{i-1} \langle \theta, y_\ell \rangle} \label{e:def.zeta.prime.it}\\
&\qquad \qquad \qquad \qquad\qquad\qquad \times \prod_{\ell=1}^{i-1} \one \big\{  \rho+c^{-1} \langle \theta, y_\ell \rangle \ge 0 \big\} \, d\by \, d\rho\, J(\theta) \, d\theta,\notag 
\end{align}
and also, 
$$
\sup_{m\ge 1} \big( a(R_{q_m}) R_{q_m}^{d-1} \big)^{-1} \text{Var}\big( V_{m,M}(t) \big) < \infty. 
$$
\end{lemma}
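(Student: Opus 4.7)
The overall strategy is to parallel the proof of Lemma \ref{l:first.second.moments.heavy}, substituting the heavy-tail dilation $r=R_{q_m}\rho$ by the Pickands-type scaling $r=\aRrho$ adapted to the exponential regime. Starting from the Palm-type identity
\begin{align*}
\E\big[T_m^\ijupa(t,s)\big]={v_{m+1}\choose i}\int_{(\R^d)^i}h_t^\ijp(\x)\ind{\M(\x)\ge R_{q_m}}\big(1-I_s(\x)\big)^{v_m-i}\prod_{\ell=1}^{i}f(x_\ell)\,d\x,
\end{align*}
I would apply the shift $x_1=x$, $x_\ell=x+y_{\ell-1}$, pass to polar coordinates $x=r\theta$, and then make the change of variables $r=\aRrho$, whose Jacobian produces exactly the scaler $a(R_{q_m})R_{q_m}^{d-1}$. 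Using Lemma \ref{l:RV.seq.exp1} and $nf(R_n)\to\lambda$ gives $\binom{v_{m+1}}{i}f(R_{q_m})^i\to\lambda^i/i!$. Taylor expanding $\psi$ with $\psi'(z)=1/a(z)$ and $a(R_{q_m})\to c$ produces the pointwise limit
\begin{equation*}
\frac{f\big(\|\,R_{q_m}\theta+a(R_{q_m})\rho\theta+y_\ell\,\|\big)}{f(R_{q_m})}\longrightarrow e^{-\rho-c^{-1}\langle\theta,y_\ell\rangle},
\end{equation*}
while the constraint $\|x_\ell\|\ge R_{q_m}$ collapses in the limit to $\rho+c^{-1}\langle\theta,y_\ell\rangle\ge 0$. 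The same scheme applied to $I_s$ turns $(1-I_s)^{v_m-i}$ into $\exp\{-\lambda e^{-\rho}\int_{\B(\{0,\by\};s)}e^{-c^{-1}\langle\theta,z\rangle}\,dz\}$. Together these reproduce the integrand defining $\xi_k^\ijp(t,s;\lambda)$ in \eqref{e:def.xikijpts}. For DCT I would invoke Potter's bounds on $\psi$ and on $a\in RV_{1-\tau}$ to produce a uniform majorant of the form $\rho\mapsto e^{-(1-\epsilon)\rho i}\prod_\ell e^{-(1-\epsilon)c^{-1}\langle\theta,y_\ell\rangle}$; compactness of the support of $h_t^\ijp(0,\by)$ in $\by$ makes this integrable. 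The convergence for $T_m^\ijdoa$ is identical after replacing $q_m$ by $p_m$, using Lemma \ref{l:RV.seq.exp1} to absorb the ratio $a(R_{q_m})R_{q_m}^{d-1}/a(R_{p_m})R_{p_m}^{d-1}\to 1$.

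For the variance assertions \eqref{e:second.moment1.exp.weak.core} I would copy verbatim the decomposition $\mathrm{Var}(T_m^\ijupa)=A_m+B_m$ from the proof of Lemma \ref{l:first.second.moments.heavy}, where $A_m$ collects pair-counts with $|\Y\cap\Y'|\ge 1$ and $B_m$ collects disjoint pairs. The change-of-variable machinery above shows each $\E[I_\ell]$, $\ell\ge 1$, is of order $a(R_{q_m})R_{q_m}^{d-1}$ (the probability that a joint connected cluster on $2i-\ell$ vertices lies outside $B(0,R_{q_m})$ decays exactly at this rate after combining with $v_{m+1}^{2i-\ell}$). For $B_m$ I would further split according to whether $\B(\Y;s)\cap\B(\Y';s)=\emptyset$: the disjoint case gives $v_{m+1}^{2i}\E[C_m]=o(a(R_{q_m})R_{q_m}^{d-1})$ by expanding the exponentials, while in the overlapping case $\cc(\Y\cup\Y',2s)$ is connected, and an exponential-tail analogue of Lemma \ref{l:connectivity.prof.heavy}(ii) (obtained by the same Pickands change of variables together with a Potter bound) again gives $O(a(R_{q_m})R_{q_m}^{d-1})$.

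For part~(ii), Fubini yields
\begin{align*}
\big(a(R_{q_m})R_{q_m}^{d-1}\big)^{-1}\E[V_{m,M}(t)]=\sum_{i=M+1}^\infty i^{k+1}{v_{m+1}\choose i}\frac{\P\big(\cc(\{X_1,\dots,X_i\},t)\text{ is connected},\M(\cdot)\ge R_{q_m}\big)}{a(R_{q_m})R_{q_m}^{d-1}},
\end{align*}
and the same change of variables as above produces, after a Potter bound with $\xi<\delta$, the uniform upper estimate
\begin{align*}
\big(a(R_{q_m})R_{q_m}^{d-1}\big)^{-1}\E[V_{m,M}(t)]\le C^*\sum_{i=M+1}^\infty i^{k+1}\frac{\lambda^i(1+\delta)^i i^{i-2}\omega_d^{i-1}}{i!},
\end{align*}
which is summable because $\lambda<(e\omega_d)^{-1}$ together with Stirling's $i!\ge(i/e)^i$. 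Pointwise DCT then produces $\zeta'_i(t)$ from \eqref{e:def.zeta.prime.it} as the limit, showing \eqref{e:first.moment.VmM.exp.weak.core}; the variance bound follows from the identical pair-decomposition as above, together with the same geometric summability. The main obstacle I anticipate is the uniform control of the density-ratio Potter bound and of the radial constraint $\ind{\|x_\ell\|\ge R_{q_m}}$ when $\|y_\ell\|$ is comparable to $a(R_{q_m})\rho$; this is precisely what forces the exclusion of $\tau=1$ when $d=2$ in the hypotheses, since there the scaler $a(R_{q_m})R_{q_m}^{d-1}\sim c R_{q_m}$ grows only logarithmically in~$m$, and the second-moment contribution from overlapping pairs ceases to be dominated by the first moment.
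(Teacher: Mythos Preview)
Your overall architecture matches the paper's proof closely: the same Palm identity, the shift-then-polar change of variables, the substitution $r=R_{q_m}+a(R_{q_m})\rho$, Lemma \ref{l:RV.seq.exp1} for the combinatorial prefactor, and the $A_m+B_m$ split for the variance. The pointwise limits you identify are exactly those in the paper, and the reduction of the variance bounds to an exponential-tail analogue of Lemma \ref{l:connectivity.prof.heavy} is also what the paper does (stated there as Lemma \ref{l:connectivity.prob.exp}).

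The genuine gap is in your dominated-convergence step. You propose to bound
\[
\frac{f(R_{q_m}+a(R_{q_m})\rho)}{f(R_{q_m})}=\exp\Big\{-\int_0^\rho \frac{a(R_{q_m})}{a(R_{q_m}+a(R_{q_m})r)}\,dr\Big\}
\]
by something of the form $e^{-(1-\epsilon)\rho}$ via Potter's bounds. But since $\psi'$ is assumed eventually non-increasing, $a$ is eventually non-decreasing, so the integrand $a(R_{q_m})/a(R_{q_m}+a(R_{q_m})r)$ is at most $1$ for large $m$; a direct argument therefore only gives the \emph{lower} bound $e^{-\rho}$ on the ratio, not an integrable upper bound uniform in $m$. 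When $\tau<1$ the function $a$ actually diverges, and for $\rho$ on the scale $R_{q_m}/a(R_{q_m})$ the integrand decays toward zero, so no bound of the form $e^{-(1-\epsilon)\rho}$ can hold uniformly. To obtain a uniform integrable majorant the paper introduces the auxiliary array $s_\ell(m)=[\psi^\leftarrow(\psi(R_{q_m})+\ell)-R_{q_m}]/a(R_{q_m})$ and invokes a result of Balkema and Embrechts (Lemma 5.2 in \cite{balkema:embrechts:2004}) guaranteeing $s_\ell(m)\le\epsilon^{-1}e^{\ell\epsilon}$ for all large $m$ and all $\ell\ge 0$; this yields the majorant $\sum_{\ell\ge 0}\one\{0<\rho\le\epsilon^{-1}e^{(\ell+1)\epsilon}\}e^{-\ell}$, which decays only polynomially in $\rho$ but, combined with the factor $(1\vee\rho)^{d-1}$, is integrable provided $\epsilon<1/d$. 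This is the one genuinely new ingredient relative to Lemma \ref{l:first.second.moments.heavy} and cannot be replaced by a routine Potter bound.

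A minor point: your diagnosis of the exclusion of $\tau=1$ when $d=2$ is off. That restriction is not used anywhere in the proof of the present lemma; it enters only in the proof of Theorem \ref{t:fslln.expdt}~$(ii)$, where one needs to choose $\gamma\in(\tau/(d-\tau),1)$ so that the Borel--Cantelli series $\sum_m \big(a(R_{q_m})R_{q_m}^{d-1}\big)^{-1}$ converges.
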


\begin{proof}
Here, we show only \eqref{e:first.moment1.exp.weak.core}, \eqref{e:second.moment1.exp.weak.core}, and \eqref{e:first.moment.VmM.exp.weak.core}. 
For \eqref{e:first.moment1.exp.weak.core}, by the same change of variables as in \eqref{e:initial.change.variables} after conditioning on $\Y$ as in \eqref{e:condtioning.expectation}, we get that 
\begin{align}
&\E  \big[T_m^\ijupa(t,s)\big]={v_{m+1} \choose i}\int_{\R^d}\int_{(\R^d)^{i-1}}h^\ijp_t(0,\by)\big(1-I_s(x,x+y_1,\cdots,x+y_{i-1})\big)^{v_{m}-i} \label{e:second.time.change.variables}\\
&\qquad \qquad \qquad \qquad \qquad \times f(x)\, \ind{\| x\|\geq R_{q_m}} \prod_{\ell=1}^{i-1}f(x+y_\ell)\ind{\|x+y_\ell\|\geq R_{q_m}}d\by dx, \notag 
\end{align}
where $\by =(y_1,\dots,y_{i-1}) \in (\R^d)^{i-1}$. Note that \eqref{e:second.time.change.variables} completely agrees with \eqref{e:initial.change.variables}. Moreover, by the polar coordinate transform $x\leftrightarrow (r,\theta)$ with $r\ge 0$, $\theta \in S^{d-1}$, it turns out that $\E\big[  T_m^\ijupa (t,s)\big]$ above  becomes the right hand side of \eqref{e:polar1}. Next, we make an additional change of variable $r=\aRrho$ to get that 
\begin{align}
\E\big[  T_m^\ijupa (t,s)\big] &= \binom{v_{m+1}}{i} a(R_{q_m}) R_{q_m}^{d-1} f(R_{q_m})^i \label{e:final.form.exp}\\
&\quad \times \int_{S^{d-1}} J(\theta) \int_0^\infty \Big(  1+\frac{a(R_{q_m})}{R_{q_m}}\, \rho\Big)^{d-1}  \int_{(\R^d)^{i-1}} 
 h_t^\ijp (0,\by)\, \frac{f\big( R_{q_m}+a(R_{q_m}) \rho \big)}{f(R_{q_m})} \notag\\
&\quad \times \prod_{\ell=1}^{i-1} \frac{f\big( \big\| \big( R_{q_m}+a(R_{q_m})\rho \big)\theta +y_\ell  \big\| \big)}{f(R_{q_m})}\, \one \Big\{ \big\| \big(  R_{q_m}+a(R_{q_m})\rho\big)\theta +y_\ell  \big\| \ge R_{q_m} \Big\} \notag \\
&\quad \times \Big( 1-I_s \big( (\aRrho)\theta, (\aRrho)\theta+y_1, \notag \\
&\qquad \qquad \qquad \qquad \qquad \qquad \qquad  \dots, (\aRrho)\theta  + y_{i-1} \big) \Big)^{v_m-i} d\by \, d\rho\, d\theta. \notag
\end{align}
Lemma \ref{l:RV.seq.exp1} gives that
$$
\binom{v_{m+1}}{i}f(R_{q_m})^i \sim \frac{\big( v_m f(R_{v_m}) \big)^i}{i!} \to \frac{\lambda^i}{i!}, \ \  \text{as } m\to\infty. 
$$
In the following, we calculate the limit of each of the terms under the integral sign of \eqref{e:final.form.exp}. By  Proposition 2.5 in \cite{resnick:2007}, we have $a'\in \text{RV}_{-\tau}$, which implies $a(z)/z\to 0$ as $z\to\infty$, and hence, 
\begin{equation}  \label{e:ratio.plus.1}
\Big(  1+\frac{a(R_{q_m})}{R_{q_m}}\, \rho\Big)^{d-1}  \to 1, \ \ \ m\to\infty, 
\end{equation}
for every $\rho>0$. Furthermore, \eqref{e:ratio.plus.1} is bounded by $2(1\vee \rho)^{d-1}$ for sufficiently large $m$. For the untreated term in the second line of \eqref{e:final.form.exp}, we have 
\begin{align}
\frac{f\big( R_{q_m}+a(R_{q_m}) \rho \big)}{f(R_{q_m})} &= \exp \Big\{  -\psi \big( \aRrho \big) +\psi(R_{q_m}) \Big\} \label{e:main.ratio.f}\\
&=\exp\bigg\{  -\int_0^\rho \frac{a(R_{q_m})}{a\big(R_{q_m}+a(R_{q_m})r\big)}dr \bigg\}. \notag
\end{align}
Since $a\in \text{RV}_{1-\tau}$, the uniform convergence of regularly varying sequences, together with $a(z)/z\to0$, $z\to\infty$, indicates that 
\begin{equation}  \label{e:a.ratio.conv.0}
\frac{a(R_{q_m})}{a\big(R_{q_m}+a(R_{q_m})r\big)} \to 1, \ \ \ m\to\infty, 
\end{equation}
for every $r> 0$. It thus follows that for every $\rho>0$, 
\begin{equation}  \label{e:conv.main.ratio}
\frac{f\big( R_{q_m}+a(R_{q_m}) \rho \big)}{f(R_{q_m})} \to e^{-\rho}, \ \ \text{as } m\to\infty. 
\end{equation}
In order to find an upper bound of \eqref{e:main.ratio.f}, we define an array $\big( s_\ell(m), \, \ell \ge 0, \, m\ge 0 \big)$ by 
$$
s_\ell(m) = \frac{\psi^\leftarrow \big( \psi(R_{q_m}) +\ell \big)-R_{q_m}}{a(R_{q_m})}, 
$$
which is equivalent to $\psi \big(R_{q_m} +a(R_{q_m})s_\ell(m)\big) = \psi(R_{q_m})+\ell$. According to Lemma 5.2 in \cite{balkema:embrechts:2004}, for $0<\epsilon<1/d$, there exists $N=N(\epsilon)\in \N$ such that $s_\ell(m)\le \epsilon^{-1} e^{\ell\epsilon}$ for all $m\ge N$ and $\ell\ge 0$. Since $\psi$ is increasing, the upper bound of \eqref{e:main.ratio.f} is given by 
\begin{align}
&\exp \Big\{ -\psi \big( \aRrho \big) +\psi(R_{q_m})  \Big\}\, \one \{\rho>0 \} \label{e:balkema.bound}\\
&= \sum_{\ell=0}^\infty \one \big\{ s_\ell(m) <\rho \le s_{\ell+1}(m)  \big\}\, \exp \Big\{ -\psi \big( \aRrho \big) +\psi(R_{q_m})  \Big\} \notag \\
&\le \sum_{\ell=0}^\infty  \one \big\{ 0 <\rho \le \epsilon^{-1} e^{(\ell+1)\epsilon} \big\}\, e^{-\ell}. \notag 
\end{align} 

Subsequently, we calculate the limit of the term in the third line of \eqref{e:final.form.exp}. By an expansion of $\big\| \big( \aRrho \big)\theta + y_\ell  \big\|$ for $\ell\in \{ 1,\dots,i-1 \}$, we can define 
\begin{align}
\gamma_m(\rho, \theta, y_\ell) &:= \big\|  \big( \aRrho \big)\theta +y_\ell  \big\| - \big( \aRrho + \langle \theta, y_\ell \rangle \big) \label{e:gamma.m.rho.theta.yl}\\
&=\frac{\|y_\ell\|^2 - \langle \theta, y_\ell \rangle^2}{\big\|  \big( \aRrho \big)\theta +y_\ell  \big\|  + \aRrho + \langle \theta, y_\ell \rangle}. \notag
\end{align}
If $\big\| \big( \aRrho \big)\theta + y_\ell  \big\|\ge R_{q_m}$, it then holds that 
\begin{equation} \label{e:conv.gamma.m}
\big| \gamma_m(\rho, \theta, y_\ell) \big| \le \frac{\big| \|y_\ell\|^2 - \langle \theta, y_\ell \rangle^2  \big|}{2R_{q_m} + \langle \theta, y_\ell \rangle} \to 0, \ \ \ m\to\infty, 
\end{equation}
uniformly for all $\rho>0$, $\theta\in S^{d-1}$, and $y_\ell\in \R^d$ with $\|y_\ell\|\le Lt$ ($L$ is given in \eqref{e:locally.determined}). Let 
\begin{align*}
A_m &= \Big\{ z\in \R^d: \big\|  \big( \aRrho \big)\theta +z \big\| \ge R_{q_m}\Big\}= \Big\{  z\in \R^d: \rho + \zeta_m (\rho, \theta, z) \ge0 \Big\}, 
\end{align*}
where 
\begin{equation}  \label{e:zeta.m.rho.theta.z}
\zeta_m (\rho, \theta, z) := \frac{\langle \theta, z \rangle + \gamma_m (\rho, \theta, z)}{a(R_{q_m})}. 
\end{equation}
Then, from \eqref{e:gamma.m.rho.theta.yl} and \eqref{e:zeta.m.rho.theta.z} we have 
\begin{align}
&\prod_{\ell=1}^{i-1}\frac{f\big( \big\| \big( R_{q_m}+a(R_{q_m})\rho \big)\theta +y_\ell  \big\| \big)}{f(R_{q_m})}\, \one \{ y_\ell\in A_m \} \label{e:third.line.ratio}\\
&=\prod_{\ell=1}^{i-1}\exp \Big\{ -\psi \big( \aRrho + \langle \theta, y_\ell \rangle + \gamma_m(\rho, \theta, y_\ell) \big) +\psi(R_{q_m}) \Big\}\, \one \{ y_\ell\in A_m \} \notag \\
&=\prod_{\ell=1}^{i-1}\exp \bigg\{  -\int_0^{\rho + \zeta_m(\rho, \theta, y_\ell)} \frac{a(R_{q_m})}{a\big( R_{q_m} + a(R_{q_m})r \big)} \, dr\bigg\}\, \one\{y_\ell \in A_m\}. \notag
\end{align}
From \eqref{e:limit.a} and \eqref{e:conv.gamma.m} it follows that, for every $\ell\in\{ 1,\dots,i-1 \}$, 
$$
\zeta_m(\rho, \theta, y_\ell) \to c^{-1} \langle \theta, y_\ell \rangle, \ \ \text{as } m\to\infty, 
$$
uniformly for all $\rho>0$, $\theta\in S^{d-1}$, and $y_\ell\in \R^d$ with $\|y_\ell\|\le Lt$. 
Thus, by \eqref{e:a.ratio.conv.0}, we have as $m\to\infty$, 
\begin{align}  
&\prod_{\ell=1}^{i-1}\frac{f\big( \big\| \big( R_{q_m}+a(R_{q_m})\rho \big)\theta +y_\ell  \big\| \big)}{f(R_{q_m})}\, \one \{ y_\ell\in A_m \} \label{e:conv.third.line.ratio} \\
&\to e^{ -\rho(i-1)-c^{-1}\sum_{\ell=1}^{i-1} \langle \theta, y_\ell\rangle } \prod_{\ell=1}^{i-1}\one \big\{  \rho+c^{-1}\langle \theta, y_\ell \rangle \ge 0\big\}. \notag
\end{align}
Notice further that \eqref{e:third.line.ratio} is bounded by $1$. 
For the remaining term in \eqref{e:final.form.exp},  we use \eqref{e:conv.third.line.ratio} and Lemma \ref{l:RV.seq.exp1} to ensure that 
\begin{align*}
&\lim_{m\to\infty}\Big( 1-I_s \big( (\aRrho)\theta, (\aRrho)\theta+y_1,  \dots, (\aRrho)\theta  + y_{i-1} \big) \Big)^{v_m-i} \\
&=\lim_{m\to\infty} \bigg(  1-\int_{\B\big( \{ 0,y_1,\dots,y_{i-1} \}; s \big)} f\Big( \big\| \big( \aRrho \big)\theta +z  \big\| \Big)\, dz \bigg)^{v_m-i}\\
&=\lim_{m\to\infty} \exp \bigg\{  -v_m f(R_{q_m}) \int_{\B\big( \{ 0,y_1,\dots,y_{i-1} \}; s \big)}  \frac{f\big( \big\| \big( \aRrho \big)\theta +z  \big\| \big)}{f(R_{q_m})}\, dz \bigg\}\\
&= \exp \Big\{  -\lambda  e^{-\rho}\int_{\B\big( \{ 0,y_1,\dots,y_{i-1} \}; s \big)} e^{-c^{-1} \langle \theta, z\rangle}  dz \Big\}. 
\end{align*}
Multiplying all of the upper bounds derived thus far, one can bound the triple integral in \eqref{e:final.form.exp} by 
\begin{align*}
&\int_{S^{d-1}}J(\theta)\int_0^\infty 2(1\vee \rho)^{d-1} \int_{(\R^d)^{i-1}} h_t^\ijp (0,\by) \sum_{\ell=0}^\infty \one \big\{ 0<\rho \le \epsilon^{-1} e^{(\ell+1)\epsilon} \big\}\, e^{-\ell} d\by \, d\rho\, d\theta \\
&=2s_{d-1} \int_{(\R^d)^{i-1}} h_t^\ijp (0,\by) d\by \int_0^\infty \sum_{\ell=0}^\infty \one \big\{ 0<\rho \le \epsilon^{-1} e^{(\ell+1)\epsilon} \big\}\, e^{-\ell} (1\vee \rho)^{d-1}d\rho \\
&\le C^* \Big( \frac{e^\epsilon}{\epsilon} \Big)^d \sum_{\ell=0}^\infty e^{-(1-\epsilon d)\ell}. 
\end{align*}
Since the last term is finite due to $0 <\epsilon<1/d$, one can apply the dominated convergence theorem to  obtain \eqref{e:first.moment1.exp.weak.core}. 

For the variance asymptotics in \eqref{e:second.moment1.exp.weak.core}, we write $\text{Var}\big( T_m^\ijupa (t,s) \big) = A_m+B_m$, where $A_m$ and $B_m$ are defined in \eqref{e:def.AmBm.decomp}. Our argument here is nearly the same as that for the proof of \eqref{e:vartmup}. More concretely, by virtue of Lemma \ref{l:connectivity.prob.exp} $(ii)$ below, one can replace \eqref{e:EIell}, \eqref{e:ECm}, and \eqref{e:EDm} respectively by
$$
\E[I_\ell] \le C^* a(R_{q_m})R_{q_m}^{d-1}, \  \ \ell\in \{ 1,2,\dots,i \}, 
$$
and
$$
v_{m+1}^{2i} \E[C_m]\, \one \big\{ |\Y\cap \Y'|=0 \big\} = o \big( a(R_{q_m}) R_{q_m}^{d-1} \big), \ \ \ m\to\infty, 
$$
and 
$$
v_{m+1}^{2i} \E[D_m] \, \one \big\{|\Y\cap \Y'|=0  \big\} \le C^* a(R_{q_m})R_{q_m}^{d-1}. 
$$
Since the rest of the argument is totally the same as that for \eqref{e:vartmup}, we now conclude that 
$$
\sup_{m\ge1} \big( a(R_{q_m})R_{q_m}^{d-1} \big)^{-1} A_m <\infty, \ \  \text{ and } \ \  \sup_{m\ge1} \big( a(R_{q_m})R_{q_m}^{d-1} \big)^{-1} B_m <\infty,
$$
as desired. 

Finally, for \eqref{e:first.moment.VmM.exp.weak.core} we have 
\begin{align*}
\big(a(R_{q_m})R_{q_m}^{d-1}\big)^{-1}\, \E\big[ V_{m,M}(t) \big] &= \sum_{i=M+1}^\infty i^{k+1} \binom{v_{m+1}}{i} \big(a(R_{q_m})R_{q_m}^{d-1}\big)^{-1}\, \\
&\quad  \times \P \Big( \cc \big( \{ X_1,\dots,X_i \}, t \big) \text{ is connected}, \, \M(X_1,\dots,X_i) \ge R_{q_m} \Big). 
\end{align*}
Proceeding as in \eqref{e:after.Stirling} and using Lemma \ref{l:connectivity.prob.exp} $(ii)$, it turns out that 
$$
\big(a(R_{q_m})R_{q_m}^{d-1}\big)^{-1}\, \E\big[ V_{m,M}(t) \big] \le C^*\sum_{i=M+1}^\infty i^{k-1} \big( \lambda(1+\delta) e\omega_d\big)^i <\infty. 
$$
Therefore, combining Lemma \ref{l:connectivity.prob.exp} $(i)$ and the dominated convergence theorem yields \eqref{e:first.moment.VmM.exp.weak.core}. 
\end{proof}

The result below is analogous to Lemma \ref{l:connectivity.prof.heavy} when the density has an exponentially decaying tail. 

\begin{lemma} \label{l:connectivity.prob.exp}
$(i)$ Under the assumptions of Theorem \ref{t:fslln.expdt} $(ii)$, for every $i\ge k+2$ and $t\in [0,1]$, 
\begin{align}  
&v_{m+1}^i \big(  a(R_{q_m})R_{q_m}^{d-1}\big)^{-1} \, \P \Big( \cc\big( \{ X_1,\dots,X_i \},  t\big) \emph{ is connected}, \, \notag \\
&\qquad \qquad \qquad\qquad \qquad \qquad \qquad\M(X_1,\dots,X_i)\ge R_{q_m}\Big) \to \lambda^i \zeta_i'(t), \ \ m\to\infty, \notag
\end{align}
where $\zeta_i'(t)$ is defined in  \eqref{e:def.zeta.prime.it}.  \\
\noindent $(ii)$ Moreover, let $\delta>0$ be a constant so small that $\lambda (1+\delta) e\omega_d < 1$. Then, for each $i\ge k+2$ and $t\in [0,1]$, there exists $N\in\N$ such that for all $m \ge N$, 
\begin{align}
&v_{m+1}^i \big(  a(R_{q_m})R_{q_m}^{d-1}\big)^{-1} \, \P\Big( \cc\big( \{ X_1,\dots,X_i \},  t\big) \emph{ is connected}, \, \label{e:bound.connectivity.prob.exp}  \\
&\qquad \qquad \qquad \qquad \qquad \qquad  \M(X_1,\dots,X_i)\ge R_{q_m}\Big) \le  C^*\lambda^i(1+\delta)^ii^{i-2}\omega_d^{i-1},  \notag 
\end{align}
where $C^*>0$ is a constant independent of $i$ and $m$. 
\end{lemma}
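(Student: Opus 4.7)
The strategy mirrors the proof of Lemma \ref{l:connectivity.prof.heavy}, but substitutes the exponential-tail change of variables used in Lemma \ref{l:first.second.moments.exp1}. Starting from the probability on the left-hand side, I would first change variables $x_1=x$, $x_\ell=x+y_{\ell-1}$ for $\ell\in\{2,\dots,i\}$, then pass to polar coordinates $x\leftrightarrow(r,\theta)$, and finally perform the exponential rescaling $r = R_{q_m}+a(R_{q_m})\rho$. This yields the representation
\begin{align*}
&v_{m+1}^i \bigl(a(R_{q_m})R_{q_m}^{d-1}\bigr)^{-1}\, \P\Big( \cc\big(\{X_1,\dots,X_i\},t\big)\text{ is connected},\, \M(X_1,\dots,X_i)\ge R_{q_m} \Big) \\
&\qquad = \bigl(v_{m+1}f(R_{q_m})\bigr)^i \int_{S^{d-1}} J(\theta)\int_0^\infty \Big(1+\tfrac{a(R_{q_m})}{R_{q_m}}\rho\Big)^{d-1}\int_{(\R^d)^{i-1}} \one\big\{\cc(\{0,\by\},t)\text{ connected}\big\}\\
&\qquad\qquad \times \frac{f\bigl(R_{q_m}+a(R_{q_m})\rho\bigr)}{f(R_{q_m})}\prod_{\ell=1}^{i-1}\frac{f\bigl(\|(R_{q_m}+a(R_{q_m})\rho)\theta+y_\ell\|\bigr)}{f(R_{q_m})}\\
&\qquad\qquad \times \one\big\{\|(R_{q_m}+a(R_{q_m})\rho)\theta+y_\ell\|\ge R_{q_m}\big\}\, d\by\, d\rho\, d\theta,
\end{align*}
which is structurally identical to \eqref{e:final.form.exp} with the indicator of connectedness in place of $h_t^{(i,j,+)}(0,\by)$.

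For Part $(i)$, I would invoke Lemma \ref{l:RV.seq.exp1} to get $v_{m+1}f(R_{q_m})\to\lambda$, so the prefactor converges to $\lambda^i$. The pointwise limits of all remaining integrand factors are exactly those established during the proof of Lemma \ref{l:first.second.moments.exp1}: $(1+a(R_{q_m})\rho/R_{q_m})^{d-1}\to 1$ via $a(z)/z\to 0$; the ratio of $f$'s in the second line tends to $e^{-\rho}$ via the integral representation in \eqref{e:main.ratio.f} and \eqref{e:a.ratio.conv.0}; and the product in the third line tends to $e^{-\rho(i-1)-c^{-1}\sum_\ell \langle\theta,y_\ell\rangle}\prod_\ell \one\{\rho+c^{-1}\langle\theta,y_\ell\rangle\ge 0\}$ by the expansion \eqref{e:gamma.m.rho.theta.yl}--\eqref{e:conv.third.line.ratio}. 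Assembling these factors produces exactly $\lambda^i\zeta_i'(t)$ once dominated convergence is justified.

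For Part $(ii)$, the key estimates are: $(1+a(R_{q_m})\rho/R_{q_m})^{d-1}\le 2(1\vee\rho)^{d-1}$ for $m$ large; the product of $i-1$ ratios in the third line of the integrand is bounded by $1$ (as noted after \eqref{e:third.line.ratio}); and the remaining ratio $f(R_{q_m}+a(R_{q_m})\rho)/f(R_{q_m})$ admits the Balkema--Embrechts bound in \eqref{e:balkema.bound} with arbitrary $\epsilon\in(0,1/d)$. These combine to give an upper bound on the $(\theta,\rho)$-integral that is a finite constant $C^*$ independent of $i$ and $m$. The inner $\by$-integral is controlled by the classical spanning-tree enumeration: $\int_{(\R^d)^{i-1}}\one\{\cc(\{0,\by\},t)\text{ connected}\}d\by\le i^{i-2}(\omega_d t^d)^{i-1}\le i^{i-2}\omega_d^{i-1}$ for $t\in[0,1]$, by a union bound over the $i^{i-2}$ spanning trees on $i$ labeled vertices and bounding each edge constraint by a ball of volume $\omega_d t^d$. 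Finally, choose $\xi>0$ with $(1+\xi)^i\le(1+\delta)^i$ and apply Lemma \ref{l:RV.seq.exp1} to find $N$ such that $v_{m+1}f(R_{q_m})\le\lambda(1+\xi)$ for all $m\ge N$. Multiplying these bounds together yields \eqref{e:bound.connectivity.prob.exp}.

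The main subtlety is ensuring the $\rho$-integrand bound from Balkema--Embrechts combined with $(1\vee\rho)^{d-1}$ yields a constant independent of $i$. This is not a real obstacle since the $i$-dependence has been isolated entirely into the prefactor $(v_{m+1}f(R_{q_m}))^i$ and the $\by$-integral $i^{i-2}\omega_d^{i-1}$; the $(\theta,\rho)$ part only sees $i$ through the uniform bound $\prod_{\ell=1}^{i-1}[\cdots]\le 1$. The rest is the same dominated-convergence argument as in Lemma \ref{l:first.second.moments.exp1}.
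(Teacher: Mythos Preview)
Your proposal is correct and follows essentially the same approach as the paper: the same change of variables $r=R_{q_m}+a(R_{q_m})\rho$ to obtain the representation \eqref{e:change.variables.connectivity.prob}, the same pointwise limits \eqref{e:ratio.plus.1}, \eqref{e:conv.main.ratio}, \eqref{e:conv.third.line.ratio} for Part $(i)$, and the same uniform bounds (the Balkema--Embrechts estimate \eqref{e:balkema.bound}, the trivial bound $\prod_\ell[\cdots]\le 1$, and the spanning-tree count $i^{i-2}\omega_d^{i-1}$) for Part $(ii)$. The paper proves $(ii)$ first and then $(i)$, and simply takes $v_{m+1}f(R_{q_m})\le\lambda(1+\delta)$ directly rather than introducing your auxiliary $\xi$, but these are cosmetic differences.
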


\begin{proof}
We first show \eqref{e:bound.connectivity.prob.exp}. 
Performing the same change of variables as in \eqref{e:second.time.change.variables} and \eqref{e:final.form.exp}, we have 
\begin{align}
&v_{m+1}^i \big(  a(R_{q_m})R_{q_m}^{d-1}\big)^{-1} \, \P\Big( \cc\big( \{ X_1,\dots,X_i \},  t\big) \emph{ is connected}, \, \M(X_1,\dots,X_i)\ge R_{q_m}\Big) \label{e:change.variables.connectivity.prob}\\
&=v_{m+1}^i f(R_{q_m})^i \int_{S^{d-1}} J(\theta) \int_0^\infty \Big( 1+\frac{a(R_{q_m})}{R_{q_m}}\rho  \Big)^{d-1} \notag \\
&\qquad\qquad   \times \int_{(\R^d)^{i-1}} \one \Big\{ \cc \big( \{0,\by\}, t \big) \text{ is connected} \Big\}\, \frac{f\big( R_{q_m}+a(R_{q_m}) \rho \big)}{f(R_{q_m})} \notag \\
&\qquad\qquad  \times \prod_{\ell=1}^{i-1} \frac{f\big( \big\| \big( R_{q_m}+a(R_{q_m})\rho \big)\theta +y_\ell  \big\| \big)}{f(R_{q_m})}\, \one \Big\{ \big\| \big(  R_{q_m}+a(R_{q_m})\rho\big)\theta +y_\ell  \big\| \ge R_{q_m} \Big\} \, d\by\, d\rho \, d\theta. \notag 
\end{align}
By Lemma \ref{l:RV.seq.exp1}, there exists $N_1 \in \N$ so that for all $m\ge N_1$, we have $v_{m+1}f(R_{q_m}) \le \lambda (1+\delta)$. Employing the bounds derived in the proof of \eqref{e:first.moment1.exp.weak.core}, there exists $N_2\in \N$ such that for all $m\ge N:=N_1 \vee N_2$, one can bound \eqref{e:change.variables.connectivity.prob} by 
\begin{align*}
&\big(\lambda(1+\delta)  \big)^i \int_{S^{d-1}} J(\theta)\int_0^\infty 2 (1\vee \rho)^{d-1} \int_{(\R^d)^{i-1}} \one \Big\{ \cc \big( \{0,\by\}, t \big) \text{ is connected} \Big\} \\
&\qquad\qquad\qquad\qquad\qquad\qquad \times \sum_{\ell=0}^\infty \one \big\{ 0<\rho \le \epsilon^{-1} e^{(\ell+1)\epsilon} \big\} e^{-\ell} \, d\by\, d\rho\, d\theta \\
&= C^* \big(\lambda(1+\delta)  \big)^i  \int_{(\R^d)^{i-1}} \one \Big\{ \cc \big( \{0,\by\}, t \big) \text{ is connected} \Big\} d\by \\
&\le C^*  \big(\lambda(1+\delta)  \big)^i  i^{i-2} \omega_d^{i-1}, 
\end{align*}
where $\epsilon\in (0,1/d)$ is determined in \eqref{e:balkema.bound} and $C^*>0$ is a constant independent of $i$ and $m$. 

Finally, combining the convergences \eqref{e:ratio.plus.1}, \eqref{e:conv.main.ratio}, and \eqref{e:conv.third.line.ratio}, together with the dominated convergence theorem, can show that \eqref{e:change.variables.connectivity.prob} tends to $\lambda^i \zeta_i'(t)$ as $m\to\infty$, as required. 
\end{proof}

The last two lemmas below will be used for the proof of Theorem \ref{t:fslln.expdt} $(i)$. The proof of these lemmas are considerably similar to those of Lemmas \ref{l:first.second.asymptotics} and \ref{l:difference.binom.poisson}, so we skip their proofs. 

\begin{lemma}   \label{l:first.second.moments.exp2}
Under the assumptions of Theorem \ref{t:fslln.expdt} $(i)$, for every $t\in [0,1]$, as $m\to\infty$, 
\begin{align*}
\big( v_{m+1}^{k+2} a(R_{q_m})R_{q_m}^{d-1} f(R_{q_m})^{k+2} \big)^{-1} \E \big[S_m^\uparrow (t)\big] &\to \frac{\xi_k^\ktop (t; 0)}{(k+2)!}, \\
\big( v_{m}^{k+2} a(R_{p_m})R_{p_m}^{d-1} f(R_{p_m})^{k+2} \big)^{-1} \E \big[S_m^\downarrow (t)\big] &\to \frac{\xi_k^\ktop (t; 0)}{(k+2)!}, 
\end{align*}
and also, 
\begin{align*}
&\big( v_{m+1}^{k+3} a(R_{q_m})R_{q_m}^{d-1} f(R_{q_m})^{k+3} \big)^{-1} \E \big[ W_m(t) \big] \\
&\qquad \qquad \to \frac{1}{(k+3)!}\, \int_{S^{d-1}}\int_0^\infty \int_{(\R^d)^{k+2}} \one \Big\{ \cc\big( \{ 0,\by \}, t \big) \text{ is connected} \Big\} \\
&\qquad\qquad \qquad \qquad \qquad  \times e^{-\rho (k+3) - c^{-1}\sum_{\ell=1}^{k+2} \langle \theta, y_\ell \rangle} \prod_{\ell=1}^{k+2} \one \big\{  \rho +c^{-1}\langle \theta, y_\ell \rangle \ge 0 \big\}\, d\by\, d\rho\, J(\theta) \, d\theta, 
\end{align*}
where $\by=(y_1,\dots,y_{k+2})\in (\R^d)^{k+2}$. 
Moreover, 
\begin{align*}
&\sup_{m\ge 1}\big( v_{m+1}^{k+2} a(R_{q_m})R_{q_m}^{d-1} f(R_{q_m})^{k+2} \big)^{-1}  \text{Var}\big( S_m^\uparrow (t) \big) <\infty, \\
&\sup_{m\ge 1} \big( v_{m}^{k+2}a(R_{p_m}) R_{p_m}^{d-1} f(R_{p_m})^{k+2} \big)^{-1}\text{Var}\big( S_m^\downarrow (t) \big) <\infty. 
\end{align*}
\end{lemma}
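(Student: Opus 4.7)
The plan is to proceed by direct moment computation, closely paralleling the proof of Lemma \ref{l:first.second.moments.exp1} but in the sparse regime $nf(R_n)\to 0$, where the extra normalizing factor $f(R_{q_m})^{k+2}$ (respectively $f(R_{q_m})^{k+3}$) appears. Since $S_m^\uparrow(t)$, $S_m^\downarrow(t)$, and $W_m(t)$ are Poisson $U$-statistics based on $\mathcal P_{v_m}$ or $\mathcal P_{v_{m+1}}$, their first moments are given cleanly by Mecke's formula (Theorem 1.6 in \cite{penrose:2003}). For instance,
$$\E[S_m^\uparrow(t)] = \frac{v_{m+1}^{k+2}}{(k+2)!}\int_{(\R^d)^{k+2}} h_t^\ktop(\bx)\,\one\{\M(\bx)\geq R_{q_m}\}\prod_{\ell=1}^{k+2}f(x_\ell)\,d\bx.$$
I would then apply the same three-stage change of variables used to derive \eqref{e:final.form.exp}: a shift $x_1=x$, $x_\ell=x+y_{\ell-1}$ (using shift-invariance \eqref{e:shift.invariance}); polar coordinates $x=r\theta$; and the exponential rescaling $r=R_{q_m}+a(R_{q_m})\rho$. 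This factors out exactly the prefactor $v_{m+1}^{k+2}a(R_{q_m})R_{q_m}^{d-1}f(R_{q_m})^{k+2}$ and leaves a triple integral over $\theta\in S^{d-1}$, $\rho\in(0,\infty)$, $\by\in(\R^d)^{k+1}$.

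The limit of that triple integral is $\xi_k^\ktop(t;0)/(k+2)!$: the integrand converges pointwise by \eqref{e:ratio.plus.1}, \eqref{e:conv.main.ratio}, and \eqref{e:conv.third.line.ratio}, and the Poisson-void-probability factor that made \eqref{e:def.xikijpts} depend on $\lambda$ is now absent because $v_m f(R_{q_m})\to 0$. Dominated convergence is justified via Potter's bound for the first $(i-1)$ density ratios together with the Balkema--Embrechts majorant \eqref{e:balkema.bound} for the remaining factor, giving an $L^1$ envelope as in the proof of \eqref{e:first.moment1.exp.weak.core}. The statement for $\E[S_m^\downarrow(t)]$ is identical after swapping $(v_{m+1},R_{q_m})$ with $(v_m,R_{p_m})$, and the statement for $\E[W_m(t)]$ follows by running the same pipeline with $i=k+3$ and $h_t^\ktop$ replaced by $\one\{\cc(\cdot,t)\text{ connected}\}$, where the outer $J(\theta)d\theta\,d\rho\,d\by$-integral matches the formula in the lemma.

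For the variance bounds I would use the Poisson $U$-statistic variance decomposition (by Mecke's formula)
$$\Var(S_m^\uparrow(t)) = \sum_{\ell=1}^{k+2}\frac{v_{m+1}^{2(k+2)-\ell}}{\ell!\bigl((k+2-\ell)!\bigr)^2}\,I_\ell(m),$$
where $I_\ell(m)$ is the integral over an $\ell$-shared configuration of two $(k+2)$-tuples. The locally-determined property \eqref{e:locally.determined} constrains all shared points to lie within $O(t)$ of each other, so after the same chain of variable changes each $I_\ell(m)$ is controlled by a constant times $a(R_{q_m})R_{q_m}^{d-1}f(R_{q_m})^{2(k+2)-\ell}$. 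Multiplying back, the $\ell$th term in the variance sum is at most $C^* v_{m+1}^{k+2}a(R_{q_m})R_{q_m}^{d-1}f(R_{q_m})^{k+2}\cdot\bigl(v_{m+1}f(R_{q_m})\bigr)^{k+2-\ell}$. Since $v_{m+1}f(R_{q_m})=o(1)$ and the $\ell=1$ term is already $O\bigl(v_{m+1}^{k+2}a(R_{q_m})R_{q_m}^{d-1}f(R_{q_m})^{k+2}\bigr)$, this yields the claimed uniform bound; the statement for $S_m^\downarrow(t)$ is handled identically.

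The main obstacle is supplying an $L^1$ majorant that survives the exponential rescaling $r=R_{q_m}+a(R_{q_m})\rho$. The ratio $f(R_{q_m}+a(R_{q_m})\rho)/f(R_{q_m})$ is not pointwise dominated by a simple power of $\rho$, and the appropriate envelope requires the Balkema--Embrechts comparison as invoked in \eqref{e:balkema.bound}; once that is in place, the remaining manipulations are routine regular-variation computations already laid out in Section \ref{sec:technical.lemma.exp}.
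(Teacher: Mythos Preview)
Your approach is correct and coincides with the paper's, which simply defers this lemma to the arguments of Lemmas~\ref{l:first.second.asymptotics} and~\ref{l:first.second.moments.exp1}. One minor slip: Potter's bound is not the right tool for the $(i-1)$ density ratios here, since $f$ is not regularly varying in the exponential-tail setting; the paper instead observes (just after \eqref{e:conv.third.line.ratio}) that the product in \eqref{e:third.line.ratio} is trivially bounded by $1$, because the indicators $\one\{y_\ell\in A_m\}$ force each argument to be at least $R_{q_m}$ and $f$ is radially decreasing.
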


\begin{lemma}  \label{l:difference.binom.poisson.exp}
Under the assumptions of Theorem \ref{t:fslln.expdt} $(i)$, for every $t\in [0,1]$ we have, as $m\to\infty$, 
\begin{align*}
&\big( v_{m+1}^{k+2} a(R_{q_m})R_{q_m}^{d-1}f(R_{q_m})^{k+2} \big)^{-1} \bigg\{  \sum_{\substack{\Y \subset \X_{v_{m+1}}, \\ |\Y|=k+2}} h_t^\ktop(\Y)\, \one \big\{ \M(\Y) \ge R_{q_m} \big\} -S_m^\uparrow(t) \bigg\} \to 0, \ \ \text{a.s.}, \\
&\big( v_{m}^{k+2} a(R_{p_m})R_{p_m}^{d-1}f(R_{p_m})^{k+2} \big)^{-1} \bigg\{  \sum_{\substack{\Y \subset \X_{v_{m}}, \\ |\Y|=k+2}} h_t^\ktop(\Y)\, \one \big\{ \M(\Y) \ge R_{p_m} \big\} -S_m^\downarrow(t) \bigg\} \to 0, \ \ \text{a.s.}, 
\end{align*}
and further, 
\begin{align*}
&\big( v_{m+1}^{k+3} a(R_{q_m})R_{q_m}^{d-1} f(R_{q_m})^{k+3}\big)^{-1} \\
&\qquad \qquad \qquad \times \bigg\{ \sum_{\substack{\Y\subset \X_{v_{m+1}}, \\ |\Y|=k+3}} \one \big\{ \cc(\Y, t) \text{ is connected}, \, \M(\Y)\ge R_{q_m} \big\} - W_m(t)\bigg\} \to 0, \ \ \text{a.s.}
\end{align*}
\end{lemma}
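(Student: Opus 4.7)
The plan is to mirror the proof of Lemma \ref{l:difference.binom.poisson} step by step, with only the single-point expectation asymptotic replaced by its exponential-tail counterpart. All three claims have the same structure, so I would treat the first in detail and merely indicate how the other two are obtained by changing $k+2$ to $k+3$ and replacing $h_t^\ktop$ by $\one\{\cc(\Y,t) \text{ is connected}\}$.

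First I would invoke the Borel--Cantelli lemma together with Markov's inequality: it suffices to verify that
\begin{equation*}
\sum_{m=1}^\infty \big( v_{m+1}^{k+2} a(R_{q_m})R_{q_m}^{d-1}f(R_{q_m})^{k+2} \big)^{-1}\,  \E\bigg[\, \bigg|\sum_{\substack{\Y\subset \X_{v_{m+1}}, \\ |\Y|=k+2}} h_t^\ktop(\Y)\one\{\M(\Y)\geq R_{q_m}\} - S_m^\uparrow(t)\bigg|\,\bigg] < \infty.
\end{equation*}
Next, conditioning on the Poisson count $|\mathcal P_{v_{m+1}}|$ exactly as in \eqref{e:after.Markov} rewrites the inner expectation as
\begin{equation*}
\E\Big[\,\Big|\binom{|\mathcal P_{v_{m+1}}|}{k+2} - \binom{v_{m+1}}{k+2}\Big|\,\Big] \cdot \E\big[ h_t^\ktop(X_1,\dots,X_{k+2})\, \one\{\M(X_1,\dots,X_{k+2})\ge R_{q_m}\}\big].
\end{equation*}

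The second factor is the key ingredient that differs from the heavy-tail case. Repeating the sequence of substitutions used in the proof of Lemma \ref{l:first.second.moments.exp1} (namely, the shift and polar change of variables followed by $r = R_{q_m}+a(R_{q_m})\rho$) and applying the integrable bound furnished by the Balkema--Embrechts estimate \eqref{e:balkema.bound} together with the convergences \eqref{e:ratio.plus.1}, \eqref{e:conv.main.ratio}, \eqref{e:conv.third.line.ratio}, one obtains
\begin{equation*}
\E\big[ h_t^\ktop(X_1,\dots,X_{k+2})\, \one\{\M(X_1,\dots,X_{k+2})\ge R_{q_m}\}\big] \le C^*\, a(R_{q_m})R_{q_m}^{d-1}f(R_{q_m})^{k+2}, \qquad m\ge 1.
\end{equation*}
Substituting this bound cancels the normalizer $a(R_{q_m})R_{q_m}^{d-1}f(R_{q_m})^{k+2}$ in each summand, reducing the problem to bounding
\begin{equation*}
\sum_{m=1}^\infty \frac{1}{v_{m+1}^{k+2}}\, \E\Big[\,\Big|\binom{|\mathcal P_{v_{m+1}}|}{k+2} - \binom{v_{m+1}}{k+2}\Big|\,\Big].
\end{equation*}

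From here the argument is identical to the one in \eqref{e:CS.inequ}--\eqref{e:last.part.negligibility}: Cauchy--Schwarz and the computation of $\E[\binom{|\mathcal P_{v_{m+1}}|}{k+2}^2]$ yield a bound of order $C^*/\sqrt{v_{m+1}}$, and the geometric growth $v_{m+1}\ge e^{m^\gamma}$ makes the series summable. For the second statement (with $S_m^\downarrow$) one replaces $v_{m+1}$ by $v_m$ and $R_{q_m}$ by $R_{p_m}$ throughout; for the third, one replaces $k+2$ by $k+3$ and the indicator $h_t^\ktop$ by $\one\{\cc(\Y,t) \text{ is connected}\}$, and the corresponding single-point expectation bound is furnished by Lemma \ref{l:connectivity.prob.exp}. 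The principal technical point---the only place the exponential-tail structure enters---is the single-point upper bound above; once this has been extracted from the analysis in Section \ref{sec:technical.lemma.exp}, the remainder of the proof is routine.
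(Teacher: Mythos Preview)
Your proposal is correct and follows exactly the approach the paper intends: the paper itself states that the proof is ``considerably similar'' to that of Lemma~\ref{l:difference.binom.poisson} and omits it, so your plan to mirror that argument with the single-point expectation bound replaced by its exponential-tail analog (obtained via the change of variables $r=R_{q_m}+a(R_{q_m})\rho$ and the Balkema--Embrechts estimate) is precisely what is required. One minor remark: Lemma~\ref{l:connectivity.prob.exp} is stated under the hypotheses of Theorem~\ref{t:fslln.expdt}~$(ii)$ rather than~$(i)$, but the integral bound you need from its proof does not use the condition $nf(R_n)\to\lambda$, so the reference to its \emph{computation} (as opposed to its statement) is legitimate.
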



\begin{thebibliography}{10}

\bibitem{adler:bobrowski:weinberger:2014}
R.~J. Adler, O.~Bobrowski, and S.~Weinberger.
\newblock Crackle: The homology of noise.
\newblock {\em Discrete \& Computational Geometry}, 52:680--704, 2014.

\bibitem{bachmann:reitzner:2018}
S.~Bachmann and M.~Reitzner.
\newblock Concentration for {P}oisson {$U$}-statistics: subgraph counts in
  random geometric graphs.
\newblock {\em Stochastic Processes and their Applications}, 128:3327--3352,
  2018.

\bibitem{balkema:embrechts:2004}
G.~Balkema and P.~Embrechts.
\newblock Multivariate excess distributions.
\newblock {www.math.ethz.ch/~embrecht/ftp/guuspe08Jun04.pdf}, 2004.

\bibitem{balkema:embrechts:2007}
G.~Balkema and P.~Embrechts.
\newblock {\em High Risk Scenarios and Extremes: A Geometric Approach}.
\newblock European Mathematical Society, 2007.

\bibitem{bjorner:1995}
A.~Bj\"orner.
\newblock {\em Topological methods. In Handbook of Combinatorics}.
\newblock Elsevier, Amsterdam, 1995.

\bibitem{decreusefond:schulte:thaele:2016}
L.~Decreusefond, M.~Schulte, and C.~Th\"ale.
\newblock Functional {P}oisson approximation in {K}antorovich-{R}ubinstein
  distance with applications to {$U$}-statistics and stochastic geometry.
\newblock {\em The Annals of Probability}, 44:2147--2197, 2016.

\bibitem{dehaan:ferreira:2006}
L.~{\rm de Haan} and A.~Ferreira.
\newblock {\em Extreme Value Theory: An Introduction}.
\newblock Springer, New York, 2006.

\bibitem{embrechts:kluppelberg:mikosch:1997}
P.~Embrechts, C.~Kl\"uppelberg, and T.~Mikosch.
\newblock {\em Modelling Extremal Events: for Insurance and Finance}.
\newblock Springer, New York, 1997.

\bibitem{ghrist:2014}
R.~Ghrist.
\newblock {\em Elementary Applied Topology}.
\newblock Createspace, 2014.

\bibitem{goel:duy:tsunoda:2019}
A.~Goel, K.~T. Duy, and K.~Tsunoda.
\newblock Strong law of large numbers for {B}etti numbers in the thermodynamic
  regime.
\newblock {\em Journal of Statistical Physics}, 174, 2019.

\bibitem{hiraoka:shirai:trinh:2018}
Y.~Hiraoka, T.~Shirai, and K.~D. Trinh.
\newblock Limit theorems for persistence diagrams.
\newblock {\em The Annals of Applied Probability}, 28:2740--2780, 2018.

\bibitem{kahle:meckes:2013}
M.~Kahle and E.~Meckes.
\newblock Limit theorems for {B}etti numbers of random simplicial complexes.
\newblock {\em Homology, Homotopy and Applications}, 15:343--374, 2013.

\bibitem{niyogi:smale:weinberger:2011}
P.~Niyogi, S.~Smale, and S.~Weinberger.
\newblock A topological view of unsupervised learning from noisy data.
\newblock {\em SIAM Journal on Computing}, 40:646--663, 2011.

\bibitem{owada:2017}
T.~Owada.
\newblock Functional central limit theorem for subgraph counting processes.
\newblock {\em Electronic Journal of Probability}, 22, 2017.

\bibitem{owada:2018}
T.~Owada.
\newblock Limit theorems for {B}etti numbers of extreme sample clouds with
  application to persistence barcodes.
\newblock {\em The Annals of Applied Probability}, 28:2814--2854, 2018.

\bibitem{owada:adler:2017}
T.~Owada and R.~J. Adler.
\newblock Limit theorems for point processes under geometric constraints (and
  topological crackle).
\newblock {\em The Annals of Probability}, 45:2004--2055, 2017.

\bibitem{owada:bobrowski:2020}
T.~Owada and O.~Bobrowski.
\newblock Convergence of persistence diagrams for topological crackle.
\newblock {\em Bernoulli}, 26:2275--2310, 2020.

\bibitem{penrose:2003}
M.~Penrose.
\newblock {\em Random Geometric Graphs, Oxford Studies in Probability 5}.
\newblock Oxford University Press, Oxford, 2003.

\bibitem{resnick:1987}
S.~Resnick.
\newblock {\em Extreme Values, Regular Variation and Point Processes}.
\newblock Springer-Verlag, New York, 1987.

\bibitem{resnick:2007}
S.~Resnick.
\newblock {\em Heavy-Tail Phenomena: Probabilistic and Statistical Modeling}.
\newblock Springer, New York, 2007.

\bibitem{schulte:thale:2012}
M.~Schulte and C.~Th\"{a}le.
\newblock The scaling limit of {P}oisson-driven order statistics with
  applications in geometric probability.
\newblock {\em Stochastic Processes and their Applications}, 122:4096--4120,
  2012.

\bibitem{thomas:owada:2021a}
A.~M. Thomas and T.~Owada.
\newblock Functional limit theorems for the {E}uler characteristic process in
  the critical regime.
\newblock {\em Advances in Applied Probability}, 53:57--80, 2021.

\bibitem{thomas:owada:2021b}
A.~M. Thomas and T.~Owada.
\newblock Functional strong law of large numbers for {E}uler characteristic
  processes of extreme sample clouds.
\newblock To appear in \emph{Extremes}, 2021.

\bibitem{yogeshwaran:subag:adler:2017}
D.~Yogeshwaran, E.~Subag, and R.~J. Adler.
\newblock Random geometric complexes in the thermodynamic regime.
\newblock {\em Probability Theory and Related Fields}, 167:107--142, 2017.

\end{thebibliography}

\end{document}